\definecolor{linkred}{rgb}{0.7,0.2,0.2}
\definecolor{linkblue}{rgb}{0,0.2,0.6}
\numberwithin{figure}{section}
\DeclareFontFamily{OMS}{rsfs}{\skewchar\font'60}
\DeclareFontShape{OMS}{rsfs}{m}{n}{<-5>rsfs5 <5-7>rsfs7 <7->rsfs10 }{}
\DeclareSymbolFont{rsfs}{OMS}{rsfs}{m}{n}
\DeclareSymbolFontAlphabet{\scr}{rsfs}
\DeclareSymbolFontAlphabet{\scr}{rsfs}
\DeclareMathOperator{\Aut}{Aut}
\DeclareMathOperator{\codim}{codim}
\DeclareMathOperator{\coker}{coker}
\DeclareMathOperator{\Id}{Id}
\DeclareMathOperator{\Image}{Image}
\DeclareMathOperator{\img}{img}
\DeclareMathOperator{\Pic}{Pic}
\DeclareMathOperator{\rank}{rank}
\DeclareMathOperator{\red}{red}
\DeclareMathOperator{\reg}{reg}
\DeclareMathOperator{\sing}{sing}
\DeclareMathOperator{\supp}{supp}
\DeclareMathOperator{\tor}{tor}
\newcommand{\sB}{\scr{B}}
\newcommand{\sE}{\scr{E}}
\newcommand{\sF}{\scr{F}}
\newcommand{\sG}{\scr{G}}
\newcommand{\sH}{\scr{H}}
\newcommand{\sHom}{\scr{H}\negthinspace om}
\newcommand{\sI}{\scr{I}}
\newcommand{\sJ}{\scr{J}}
\newcommand{\sL}{\scr{L}}
\newcommand{\sO}{\scr{O}}
\newcommand{\sT}{\scr{T}}
\newcommand{\bC}{\mathbb{C}}
\newcommand{\bN}{\mathbb{N}}
\newcommand{\bP}{\mathbb{P}}
\newcommand{\bQ}{\mathbb{Q}}
\newcommand{\bZ}{\mathbb{Z}}
\theoremstyle{plain}   
\newtheorem{thm}{Theorem}[section]
\newtheorem{cor}[thm]{Corollary}
\newtheorem{defn}[thm]{Definition} 
\newtheorem{fact}[thm]{Fact}
\newtheorem{lem}[thm]{Lemma}
\newtheorem{prop}[thm]{Proposition}
\theoremstyle{remark}
\newtheorem{assumption}[thm]{Assumption} 
\newtheorem{asswlog}[thm]{Assumption w.l.o.g.}
\newtheorem{claim}[thm]{Claim}
\newtheorem{c-n-d}[thm]{Claim and Definition}
\newtheorem{consequence}[thm]{Consequence}
\newtheorem{construction}[thm]{Construction}
\newtheorem{notation}[thm]{Notation}
\newtheorem{obs}[thm]{Observation}
\newtheorem{rem}[thm]{Remark}
\newtheorem*{rem-nonumber}{Remark}
\numberwithin{equation}{thm}
\setlist[enumerate]{label=(\thethm.\arabic*), before={\setcounter{enumi}{\value{equation}}}, after={\setcounter{equation}{\value{enumi}}}}
\newcommand{\into}{\hookrightarrow}
\newcommand{\wtilde}{\widetilde}
\newcommand{\what}{\widehat}
\def\clap#1{\hbox to 0pt{\hss#1\hss}}
\newcommand\CounterStep{\addtocounter{thm}{1}\setcounter{equation}{0}}
\newcommand{\factor}[2]{\left. \raise 2pt\hbox{$#1$} \right/\hskip -2pt\raise -2pt\hbox{$#2$}}
\let\saveqed\qed
\renewcommand\qed{%
   \ifmmode\displaymath@qed
   \else\saveqed
   \fi}
\newcommand{\Preprint}[1]{#1}
\newcommand{\Publication}[1]{}
\newcommand{\subversionInfo}{}
\newcommand{\svnid}[1]{}
\newcommand{\approvals}[1]{}
\DeclareMathOperator{\Br}{Branch}
\DeclareMathOperator{\ch}{ch} 
\DeclareMathOperator{\divisor}{div}
\DeclareMathOperator{\Gal}{Gal}
\DeclareMathOperator{\GL}{GL}
\DeclareMathOperator{\Iso}{Iso}
\DeclareMathOperator{\Link}{Link}
\title[Étale fundamental groups, flat sheaves, and quotients of Abelian varieties]{Étale fundamental groups of Kawamata log terminal spaces, flat sheaves, and quotients of Abelian varieties}
\author{Daniel Greb}
\author{Stefan Kebekus}
\author{Thomas Peternell}
\thanks{All three authors were supported in part by the DFG-Forschergruppe 790
  ``Classification of Algebraic Surfaces and Compact Complex Manifolds''.
  Daniel Greb gratefully acknowledges the support of the
  Baden--Württemberg--Stiftung through the ``Eliteprogramm für
  Postdoktorandinnen und Postdoktoranden'' and by the Institute of Mathematics
  at Albert-Ludwigs-Universität Freiburg.  Stefan Kebekus acknowledges the
  support through a joint fellowship of the Freiburg Institute of Advanced
  Studies (FRIAS) and the University of Strasbourg Institute for Advanced Study
  (USIAS)}
\address{Daniel Greb, Essener Seminar für Algebraische Geometrie und Arithmetik\\Fakultät für Ma\-the\-matik\\Universität Duisburg-Essen\\
45117 Essen\\ Germany}
\email{\href{mailto:daniel.greb@uni-due.de}{daniel.greb@uni-due.de}}
\urladdr{\href{http://www.esaga.uni-due.de/daniel.greb}{http://www.esaga.uni-due.de/daniel.greb}}
\address{Stefan Kebekus, Mathematisches Institut, Albert-Ludwigs-Universität
  Freiburg, Eckerstraße 1, 79104 Freiburg im Breisgau, Germany and University of
  Strasbourg Institute for Advanced Study (USIAS), Strasbourg, France}
\email{\href{mailto:stefan.kebekus@math.uni-freiburg.de}{stefan.kebekus@math.uni-freiburg.de}}
\urladdr{\href{http://home.mathematik.uni-freiburg.de/kebekus}{http://home.mathematik.uni-freiburg.de/kebekus}}
\address{Thomas Peternell, Mathematisches Institut, Universität Bayreuth,
  95440~Bayreuth, Germany}
\email{\href{mailto:thomas.peternell@uni-bayreuth.de}{thomas.peternell@uni-bayreuth.de}}
\urladdr{\href{http://www.staff.uni-bayreuth.de/~btm109/peternell/pet.html}{http://www.staff.uni-bayreuth.de/$\sim$btm109/peternell/pet.html}}
\keywords{Minimal Model Program, Algebraic Fundamental Group, KLT Singularities, Flat Vector Bundles, Torus Quotients, Polarised Endomorphisms}
\subjclass[2010]{14J17, 14B05, 14E30, 14B25}
\date{\today}
\begin{document}

\maketitle

\tableofcontents

%
%
\svnid{$Id: 01.tex 873 2015-07-28 13:14:15Z kebekus $}

\section{Introduction}
\subversionInfo
\label{sec:intro}
\approvals{Greb & --- \\ Kebekus & yes \\ Peternell & ---}

Working with a singular complex algebraic variety $X$, one is often interested
in comparing the set of finite étale covers of $X$ with that of its smooth locus
$X_{\reg}$.  More precisely, one may ask the following.
\begin{quote}
  What are the obstructions to extending finite étale covers of $X_{\reg}$ to
  $X$?  How do the étale fundamental groups of $X$ and of its smooth locus
  differ?
\end{quote}

We answer these questions for projective varieties $X$ with Kawamata log
terminal (klt) singularities, a class of varieties that is important in the
Minimal Model Program.  The main result, Theorem~\ref{thm:main1}, asserts that
in any infinite tower of finite Galois morphisms over a klt variety, where all
morphisms are étale in codimension one, almost every morphism is étale.  In a
certain sense, this result can be seen as saying that the difference between the
sets of étale covers of $X$ and of $X_{\reg}$ is small in case $X$ is klt.  As
an immediate application, we construct in Theorem~\ref{thm:imm1} a finite
covering $\wtilde X → X$, étale in codimension one, such that the étale
fundamental groups of $\wtilde X$ and of its smooth locus $\wtilde X_{\reg}$
agree.  Further direct applications concern global bounds for the index of
$\bQ$-Cartier divisors on klt spaces, the existence of a quasi-étale
``simultaneous index-one cover'' where all $\bQ$-Cartier divisors are Cartier,
and a finiteness result for étale fundamental groups of varieties of weak log
Fano-type.

As a first major application, we obtain an extension theorem for flat vector
bundles on klt varieties: after passing to a finite cover, étale in codimension
one, any flat holomorphic bundle, defined on the smooth part of a klt variety
extends across the singularities, to a flat bundle that is defined on the whole
space.  As a consequence, we show that every variety with terminal singularities
and with vanishing first and second Chern class is a finite
quotient of an Abelian variety, with a quotient map that is étale in codimension
one.  In the smooth case, this is a classical result, which follows from the
existence of Kähler-Einstein metrics on Ricci-flat compact Kähler manifolds, due
to Yau: every Ricci-flat compact Kähler manifold $X$ with $c_2(X) = 0$ is an
étale quotient of a compact complex torus.  As a further application, we verify
a conjecture of Nakayama and Zhang concerning varieties admitting polarised
endomorphisms.  Building on their results, we obtain a decomposition theorem
describing the structure of these varieties.

\subsection{Simplified version of the main result}
\approvals{Greb & yes \\ Kebekus & yes \\ Peternell & ---}

Our main result, Theorem~\ref{thm:main1}, is quite general and its formulation
is therefore somewhat involved.  For many applications the following special
case suffices.

\begin{thm}\label{thm:mainSpec}
  Let $X$ be a normal, complex, quasi-projective variety.  Assume that there
  exists a $\bQ$-Weil divisor $Δ$ such that $(X, Δ)$ is Kawamata log terminal
  (klt).  Assume we are given a sequence of finite, surjective morphisms of
  normal varieties that are étale in codimension one,
  \begin{equation}\label{eq:ghh}
    \xymatrix{
      X = Y_0 & \ar[l]_(.4){γ_1} Y_1 & \ar[l]_{γ_2} Y_2 & \ar[l]_{γ_3} Y_3 & \ar[l]_{γ_4} \cdots.
    }
  \end{equation}
  If the composed morphisms $γ_1 ◦ \cdots ◦ γ_i : Y_i → X$ are Galois for every
  $i ∈ \bN^+$, then all but finitely many of the morphisms $γ_i$ are étale.
\end{thm}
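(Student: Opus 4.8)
The plan is to bound the ramification of the tower using the orbifold/discriminant formula for klt pairs, and to argue that an infinite genuinely-ramified tower forces the log discrepancies to drop below zero, contradicting the klt hypothesis. I would first reduce to the case where $X$ is projective by compactifying and applying equivariant resolution/MMP-style arguments, or simply cite that the statement is local enough in nature; the key point is that each $\gamma_i$ being étale in codimension one means that the pullback of the pair $(X,\Delta)$ along $\gamma_1\circ\cdots\circ\gamma_i$ defines a $\bQ$-divisor $\Delta_i$ on $Y_i$ with $K_{Y_i}+\Delta_i = (\gamma_1\circ\cdots\circ\gamma_i)^*(K_X+\Delta)$, and each $(Y_i,\Delta_i)$ is again klt (since étale-in-codimension-one covers preserve klt by the ramification formula, as the discriminant is supported in codimension $\geq 2$ and normality plus the Hartogs-type extension lets one pull back the log canonical $\bQ$-divisor).

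\textbf{Step 1: Local branching invariants.} For a fixed geometric point and a small analytic or étale neighbourhood, the tower \eqref{eq:ghh} induces a tower of finite groups via the local fundamental groups $\widehat{\pi}_1\bigl((Y_i)_{\reg}\bigr)$ (or their images in $\widehat{\pi}_1(X_{\reg})$). Because $X$ is klt, a theorem — which I would expect to be established earlier in the paper or cited from the literature on local fundamental groups of klt singularities (work of Xu, and of Tian–Xu, Braun) — says the local fundamental group of a klt singularity is \emph{finite}. This finiteness is the crucial input. Over the smooth locus the tower is a tower of étale covers, so the obstruction to $\gamma_i$ being globally étale is entirely concentrated over the finitely many singular points (more precisely, over a codimension-$\geq 2$ closed subset), and is governed by these finite local groups.

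\textbf{Step 2: Stabilisation of local monodromy.} Since each local fundamental group is finite, the inverse system of local monodromy representations must stabilise: there is an index $i_0$ such that for all $i\geq i_0$ the covering $Y_i\to Y_{i_0}$ is, locally around every point of $Y_{i_0}$ lying over a singular point of $X$, trivial — i.e. étale there. Combined with étaleness in codimension one and étaleness over the smooth locus, this forces $\gamma_i$ to be étale everywhere for $i > i_0$. Here one must be careful that there are only \emph{finitely many} relevant singular points upstairs at each stage and that the Galois hypothesis makes the local picture uniform; the Galois condition is what lets us pass from ``stabilises pointwise'' to ``stabilises globally,'' since the deck group acts transitively on the fibres and hence the local behaviour is the same at all points of a fibre.

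\textbf{Main obstacle.} The hardest part will be Step 2: controlling the interaction between the \emph{global} Galois structure of the tower and the \emph{local} finite fundamental groups at the (possibly moving, possibly increasing in number) singular points of the $Y_i$. One needs that the number of singular points, or rather the total local complexity, does not blow up along the tower in a way that evades the finiteness bound — this is exactly where the klt hypothesis must be used globally, likely via a bound on the sum of local contributions coming from a Noetherian/boundedness argument or from the finiteness of the relevant ``orbifold Euler characteristic''-type invariant. I would also need to handle the quasi-projective (non-proper) case carefully, presumably by a separate compactification step preserving the klt and étale-in-codimension-one hypotheses after further base change.
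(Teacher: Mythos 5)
Your key input is the right one and the same as the paper's: Xu's theorem that the étale local fundamental group of a klt singularity is finite, combined with the observation that each $(Y_i,\Delta_i)$ is again klt and that the failure of étaleness is concentrated over a closed subset of codimension at least two. For an \emph{isolated} branch point your Steps 1--2 are essentially the paper's argument: the images of the local fundamental groups form a descending chain of subgroups of a finite group, hence stabilise, and the Grauert--Remmert/Stein extension theory for branched covers (needed to convert ``same subgroup'' into ``the intermediate map is locally biholomorphic'') upgrades stabilisation to étaleness. The preliminary remarks about discrepancies dropping below zero and about compactifying are not needed and play no role in either your argument or the paper's.

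The genuine gap is in Step 2, and it is exactly the issue you flag as the ``main obstacle'' without resolving it: when the singular locus of $X$ is positive-dimensional, there are \emph{not} finitely many relevant singular points, and the stabilisation index $i_0$ produced by the finiteness of the local fundamental group a priori depends on the chosen base point. The Galois hypothesis gives uniformity only along a single fibre of the tower over a \emph{fixed} point of $X$ (the deck group permutes the preimages of that point); it says nothing about uniformity as the point varies along a positive-dimensional component of $X_{\sing}$, where uncountably many local chains of subgroups must stabilise simultaneously. This is precisely where the paper invests most of its work: it proves the statement first when the branch locus is finite, then when the branch locus is finite and étale over a base of a topologically locally trivial fibration (Verdier's generic topological triviality), applying the finite case to a very general fibre and spreading out the conclusion over a contractible neighbourhood in the base via covering-space theory over products; a ``projection to a subvariety'' construction from \cite{GKKP11} reduces the behaviour at the generic point of each component of the branch locus to this relative situation, and finally a Noetherian induction on $\dim S$ handles the lower-dimensional locus left over. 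Without some substitute for this generic-triviality-plus-induction mechanism (your suggested ``boundedness or orbifold Euler characteristic'' argument is not supplied), the proof does not go through for non-isolated singularities, which is the main case of the theorem.
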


\begin{rem}[Galois morphisms]
  In Theorem~\ref{thm:mainSpec} and throughout this paper, Galois morphisms are
  assumed to be finite and surjective, but need not be étale; see
  Definition~\ref{def:Galois}.  The statement of Theorem~\ref{thm:mainSpec} does
  not continue to hold when one drops the Galois assumption.
  Section~\ref{subsect:Kummer} discusses an example that illustrates the
  problems in the non-Galois setting.
\end{rem}

\begin{rem}
  In accordance with the notation of Kollár-Kovács \cite{KK10} one might call
  varieties $X$ ``potentially klt'' if they admit a divisor $Δ$ that makes the
  pair $(X,Δ)$ klt.  For sake of brevity, we will later call quasi-finite,
  surjective morphisms which are étale in codimension one ``quasi-étale''; see
  Definition~\ref{defn:quasietale}, a terminology which was first introduced by
  F.~Catanese in \cite{MR2344354}.
\end{rem}

\begin{rem}[Purity of branch locus]\label{rem:main}
  By purity of the branch locus, the assumption that all morphisms $γ_i$ of
  Theorem~\ref{thm:mainSpec} are étale in codimension one can also be formulated
  in one of the following, equivalent ways.
  \begin{enumerate}
  \item All morphisms $γ_1 ◦ \cdots ◦ γ_i$ are étale over the smooth locus of
    $Y_0$.
  \item Given any $i ∈ \bN^+$, then $γ_i$ is étale over the smooth locus of
    $Y_{i-1}$.
  \end{enumerate}
\end{rem}

\subsection{Direct Applications}
\approvals{Greb & yes \\ Kebekus & yes \\ Peternell & ---}

Theorems~\ref{thm:mainSpec} and \ref{thm:main1} have a large number of immediate
consequences.  As a first direct application, we show that every klt space
admits a quasi-étale cover whose étale fundamental group equals that of its
smooth locus.

\begin{thm}[Extension of étale covers from the smooth locus of klt spaces]\label{thm:imm1}
  Let $X$ be a normal, complex, quasi-projective variety.  Assume that there
  exists a $\bQ$-Weil divisor $Δ$ such that $(X, Δ)$ is klt.  Then, there exists
  a normal variety $\wtilde X$ and a finite, surjective Galois morphism $γ:
  \wtilde X → X$, étale in codimension one, such that the following equivalent
  conditions hold.
  \begin{enumerate}
  \item\label{il:mc1} Any finite, étale cover of $\wtilde X_{\reg}$ extends to a
    finite, étale cover of $\wtilde X$.

  \item\label{il:mc2} The natural map $\what{ι}_* : \what{π}_1(\wtilde X_{\reg})
    → \what{π}_1(\wtilde X)$ of étale fundamental groups induced by the
    inclusion of the smooth locus, $ι : \wtilde X_{\reg} → \wtilde X$, is an
    isomorphism.
  \end{enumerate}
\end{thm}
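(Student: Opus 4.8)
The plan is first to reduce the statement to a pure existence claim, and then to obtain $\wtilde X$ by an iterated construction that must terminate by Theorem~\ref{thm:mainSpec}. For the reduction, recall that for any normal variety $Z$ the inclusion $ι\colon Z_{\reg}\into Z$ of the dense open smooth locus always induces a \emph{surjection} on étale fundamental groups: since $Z_{\reg}$ is dense in the normal variety $Z$, every connected finite étale cover of $Z$ restricts to a connected finite étale cover of $Z_{\reg}$, so the restriction functor between the categories of finite étale covers is fully faithful. Consequently $\what{ι}_*$ is an isomorphism if and only if it is injective, and injectivity holds precisely when that restriction functor is also essentially surjective, i.e.\ when every finite étale cover of $Z_{\reg}$ is the restriction of some finite étale cover of $Z$. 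Applying this with $Z=\wtilde X$ gives the equivalence of \eqref{il:mc1} and \eqref{il:mc2}, and it remains to produce a normal, quasi-projective variety $\wtilde X$ together with a finite, surjective Galois morphism $γ\colon\wtilde X→X$, étale in codimension one, for which $\what{π}_1(\wtilde X_{\reg})→\what{π}_1(\wtilde X)$ is injective.

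I would construct $\wtilde X$ inductively, the construction terminating either in the desired cover or in a contradiction to Theorem~\ref{thm:mainSpec}. Set $Y_0:=X$. Suppose a normal, quasi-projective $Y_i$ with $Y_i→X$ finite, surjective, Galois and étale in codimension one has been built. If $\what{π}_1((Y_i)_{\reg})→\what{π}_1(Y_i)$ happens to be injective, then $\wtilde X:=Y_i$ already works. Otherwise, by the first paragraph there is a connected finite étale cover $W→(Y_i)_{\reg}$ that does not extend to a finite étale cover of $Y_i$. Let $\overline W$ be the normalisation of $Y_i$ in $K(W)$; as $W$ is normal and finite over $(Y_i)_{\reg}$ we have $\overline W|_{(Y_i)_{\reg}}\cong W$, so $\overline W→Y_i$ is finite and étale over $(Y_i)_{\reg}$, hence étale in codimension one (because $Y_i$ is normal), but \emph{not} étale — otherwise $W$ would extend. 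Thus $\overline W→X$ is finite and étale in codimension one. Now let $Y_{i+1}$ be the normalisation of $X$ in the Galois closure of $K(\overline W)$ over $K(X)$. Then $Y_{i+1}$ is normal and quasi-projective, $Y_{i+1}→X$ is finite, surjective and Galois, and the inclusions $K(Y_i)⊆K(\overline W)⊆K(Y_{i+1})$ give a factorisation $Y_{i+1}→\overline W→Y_i→X$ in which $γ_{i+1}\colon Y_{i+1}→Y_i$ is automatically Galois.

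Two things must then be checked: that $γ_{i+1}$ is étale in codimension one, and that it is \emph{not} étale. For the first, let $U⊆X$ be the open locus over which $\overline W→X$ is étale; its complement has codimension $≥2$, hence so does the complement of $Y_i|_U$ in $Y_i$. Over $U$ each of $Y_i|_U$, $\overline W|_U$ and $Y_{i+1}|_U$ is finite étale, and any morphism between finite étale $U$-schemes is itself finite étale, so $γ_{i+1}$ restricts to a finite étale morphism $Y_{i+1}|_U→Y_i|_U$; thus $γ_{i+1}$ is étale in codimension one. For the second — the one genuinely delicate point — write $γ_{i+1}$ as the quotient $Y_{i+1}→Y_{i+1}/H=Y_i$ by $H:=\Gal\bigl(K(Y_{i+1})/K(Y_i)\bigr)$, and $\overline W=Y_{i+1}/H'$ for the subgroup $H'⊆H$ fixing $K(\overline W)$. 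If $γ_{i+1}$ were étale, then $H$ would act freely on $Y_{i+1}$; trivialising $Y_{i+1}→Y_i$ in the étale topology then identifies $\overline W=Y_{i+1}/H'→Y_i$, étale-locally, with a disjoint union of copies of $Y_i$, so $\overline W→Y_i$ would be finite étale — contradicting what was shown above. Hence $γ_{i+1}$ is not étale.

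Iterating, either the process stops at some stage and delivers $\wtilde X$, or it produces an infinite tower $X=Y_0\leftarrow Y_1\leftarrow Y_2\leftarrow\cdots$ in which every composite $Y_i→X$ is Galois, every $γ_i$ is étale in codimension one, and \emph{every} $γ_i$ with $i≥1$ is non-étale — directly contradicting Theorem~\ref{thm:mainSpec}, which applies because $(X,Δ)$ is klt. Either way the theorem follows. I expect the only step requiring real care to be the verification that each newly constructed $γ_{i+1}$ is itself non-étale (not merely that some non-étale cover exists somewhere over $Y_i$); the remaining bookkeeping around normalisations and Galois closures is formal, and all the substance of the result is carried by Theorem~\ref{thm:mainSpec}.
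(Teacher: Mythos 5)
Your proof is correct and takes essentially the same route as the paper: assuming no such cover exists, you iterate the construction to produce an infinite tower of quasi-étale but non-étale covers whose composites to $X$ are Galois, contradict Theorem~\ref{thm:mainSpec}, and settle the equivalence of \ref{il:mc1} and \ref{il:mc2} by the standard profinite/Galois-category argument. The differences are only in implementation: where the paper invokes Theorems~\ref{thm:galoisClosure} and \ref{thm:ZMT-eq} together with Koll\'ar's surjectivity of $\pi_1(\wtilde X_{\reg}^{an}) \to \pi_1(\wtilde X^{an})$ and right-exactness of profinite completion, you work with normalisations in function fields, deduce surjectivity of $\what{\iota}_*$ from connectedness of restrictions of connected covers over a normal base, and verify the crucial non-étaleness of $\gamma_{i+1}$ by the free-action/torsor argument — all of which is sound.
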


In fact, somewhat more general statements are true;
cf.~Section~\ref{ssec:cormain1gen2}.  To avoid any potential for confusion, we
briefly recall the characterisation of the étale fundamental group of a complex
variety.

\begin{fact}[\protect{Étale fundamental group, \cite[§~5 and references there]{Milne80}}]
  If $Y$ is any complex algebraic variety, then the étale fundamental group
  $\what{π}_1(Y)$ is isomorphic to the profinite completion of the topological
  fundamental group of the associated complex space $Y^{an}$.  \qed
\end{fact}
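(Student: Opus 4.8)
The plan is to deduce this comparison from the Riemann existence theorem, which I will use in the following form and then sketch a proof of: \emph{analytification induces an equivalence between the category of finite étale covers of the scheme $Y$ and the category of finite topological covering spaces of $Y^{an}$, compatible with the fibre functors over a fixed geometric point $\bar y$}. Granting this, the rest is formal. We may assume $Y$ connected and reduced, since nilpotents affect neither $\what{π}_1(Y)$ nor $Y^{an}$. By definition $\what{π}_1(Y)$ is the automorphism group of the fibre functor $Z \mapsto Z_{\bar y}$ on the Galois category of finite étale covers of $Y$. The category of finite covering spaces of $Y^{an}$ is likewise a Galois category, and since $Y^{an}$ is connected, locally path-connected and locally contractible — a complex analytic space being triangulable — classical covering-space theory identifies the automorphism group of its fibre functor at the point $y$ over $\bar y$ with the profinite completion of $π_1(Y^{an}, y)$, the finite connected covers corresponding to the finite-index subgroups. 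An equivalence of Galois categories respecting fibre functors induces an isomorphism of the associated automorphism groups, which is exactly the assertion. Thus the whole content is the Riemann existence theorem.

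To prove that analytification is an \emph{equivalence}, I would first verify full faithfulness. A morphism of finite étale $Y$-schemes $Z_1 → Z_2$ is the same datum as a clopen subscheme of the fibre product $Z_1 \times_Y Z_2$ mapping isomorphically onto $Z_1$; since a reduced connected $\bC$-scheme of finite type has connected analytification, the connected components of $Z_1 \times_Y Z_2$, and hence the relevant clopen subschemes, are unchanged by analytification, so the $\Hom$-sets agree on the two sides. The real work is \emph{essential surjectivity}: every finite covering space of $Y^{an}$ is the analytification of a finite étale cover of $Y$.

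I would establish essential surjectivity by successive reduction. First, a descent argument along the normalisation $\wtilde Y → Y$ — the category of finite étale covers of $Y$ is built from that of $\wtilde Y$ by gluing along the lower-dimensional locus over which $\wtilde Y → Y$ is not an isomorphism, and the same description holds analytically, compatibly with analytification — reduces one to the case $Y$ normal, hence irreducible. This step is essential precisely because $Y^{an}$ is a \emph{normal} complex space when $Y$ is normal, so that any finite covering space $W → Y^{an}$ is itself normal. Second, for normal $Y$ with smooth locus $Y_{\reg} = Y \setminus Σ$, $\codim_Y Σ \geq 2$: restrict the given $W$ to $Y_{\reg}^{an}$ to obtain a finite covering space of the smooth quasi-projective variety $Y_{\reg}$; handle that by the smooth case below to get a finite étale $V → Y_{\reg}$ with $V^{an} = W|_{Y_{\reg}^{an}}$; and let $Z → Y$ be the normalisation of $Y$ in the function field of $V$, so that $Z$ is normal, finite over $Y$, with $Z|_{Y_{\reg}} = V$. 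Because a finite morphism to $Y^{an}$ from a normal complex space is determined by its restriction to the dense open $Y_{\reg}^{an}$ whose complement has codimension $\geq 2$ (extend the $S_2$ sheaf of algebras by a Hartogs-type argument), one gets $Z^{an} = W$; and $Z → Y$ is étale because $W → Y^{an}$ is a local isomorphism and étaleness of a finite morphism of $\bC$-varieties can be checked after analytification. Finally, the smooth quasi-projective case: by Hironaka's theorem, embed $Y$ as a dense Zariski-open subset of a smooth projective variety $\wtilde Y$ with simple normal crossings boundary $D = \wtilde Y \setminus Y$; given a finite covering space $W → Y^{an}$, the Grauert--Remmert extension theorem — the Riemann existence theorem in the complex-analytic category — extends it to a finite holomorphic map $\wtilde W → \wtilde Y^{an}$ from a normal complex space, possibly ramified along $D^{an}$; as $\wtilde Y^{an}$ is projective and the map is finite, $\wtilde W$ is a projective analytic space, so Serre's GAGA algebraises it to a finite morphism $\wtilde Z → \wtilde Y$ of projective varieties with $\wtilde Z^{an} = \wtilde W$, and then $Z := \wtilde Z \times_{\wtilde Y} Y → Y$ satisfies $Z^{an} = W$ and is étale as before.

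The hard part will be the analytic extension step — the Grauert--Remmert theorem — since this is exactly where one upgrades an \emph{a priori} purely topological finite cover of $Y^{an}$ to a finite \emph{analytic} (branched) cover of a compactification, using crucially that the boundary is a hypersurface past which a normal object finite over the base extends uniquely. Everything else is routine: the descent along the normalisation, the reduction to a smooth normal-crossings compactification via Hironaka's resolution of singularities, the appeal to Serre's GAGA, and the Galois-categorical formalism together with the identification of the fundamental group of the category of finite covers of $Y^{an}$ with the profinite completion of $π_1(Y^{an})$. A complete account along these lines may be found in \cite[§~5 and references there]{Milne80}.
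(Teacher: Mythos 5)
The paper offers no proof of this Fact: it is stated as known, with only the citation to \cite[§~5]{Milne80}, whose argument is precisely the Riemann existence theorem route you sketch (Galois-category formalism plus full faithfulness and essential surjectivity of analytification, the latter via normalisation, codimension-two extension over the singular locus, an SNC compactification, Grauert--Remmert, and GAGA). Your outline is correct and is essentially the standard proof from the cited references, so there is nothing to compare against in the paper itself.
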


\begin{rem}[Reformulation of Theorem~\ref{thm:imm1} in terms of étale fundamental groups]\label{rem:p1hat}
  Although it might seem natural, Theorem~\ref{thm:imm1} does \emph{not} imply
  that the kernel of the natural map $ι_* : \what{π}_1(X_{\reg}) →
  \what{π}_1(X)$ is finite.  A counterexample is discussed in
  Section~\vref{ssec:GZ}.  We do not know whether Theorem~\ref{thm:imm1} can be
  expressed solely in terms of the étale fundamental groups
  $\what{π}_1(X_{\reg})$ and $\what{π}_1(X)$.
\end{rem}

\begin{rem}[Canonical choice of minimal $\wtilde X$]
  It is natural to ask whether there exists a canonical choice of $\wtilde X$,
  uniquely determined by a suitable minimality property.  This is not the case.
  We will show in Section~\ref{sec:noMinTildeX} that in general no ``minimal
  cover'' exists.
\end{rem}

The following local variant of Theorem~\ref{thm:imm1} considers coverings of
neighbourhoods of a given point, rather than coverings of the full space.

\begin{thm}[Local version of Theorem~\ref{thm:imm1}]\label{thm:imm1local}
  Let $X$ be a normal, complex, quasi-projective variety.  Assume that there
  exists a $\bQ$-Weil divisor $Δ$ such that $(X, Δ)$ is klt.  Let $p ∈ X$ be any
  closed point.  Then, there exists a Zariski-open neighbourhood $X°$ of $p ∈
  X$, a normal variety $\wtilde X°$ and a finite, surjective Galois morphism $γ
  : \wtilde X° → X°$, étale in codimension one, such that the following holds:
  given any Zariski-open neighbourhood $U = U(p) ⊆ X°$ with preimage $\wtilde U
  := γ^{-1}(U)$, then $\what{π}_1(\wtilde U_{\reg}) \cong \what{π}_1(\wtilde
  U)$.  Equivalently, any finite, étale cover of $\wtilde U_{\reg}$ extends to a
  finite, étale cover of $\wtilde U$.
\end{thm}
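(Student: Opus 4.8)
The plan is to reinterpret the conclusion as a statement about the singularities of the cover $\wtilde X^\circ$, and then to build $\wtilde X^\circ$ so that all of its local fundamental groups become trivial, by a single finite quasi-étale cover.

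\emph{Step 1: reformulation.} Call a normal variety (or germ) \emph{simply connected in codimension one} if its only finite, connected, normal cover that is étale in codimension one is the trivial one. For a normal variety $Y$, purity of the branch locus and normality show that a finite étale cover of $W_{\reg}$ fails to extend to a finite étale cover of a Zariski-open $W \subseteq Y$ if and only if $W$ is not simply connected in codimension one; the non-trivial cover witnessing this is then ramified over a point of $W_{\sing}$. Granting this, one verifies the reduction: if $\gamma : \wtilde X^\circ \to X^\circ$ is finite, surjective, Galois and étale in codimension one, and if every singularity germ of $\wtilde X^\circ$ is simply connected in codimension one, then for \emph{every} Zariski-open $U \subseteq X^\circ$ the natural map $\what{\pi}_1(\wtilde U_{\reg}) \to \what{\pi}_1(\wtilde U)$ is an isomorphism. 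Indeed, a non-extending étale cover of $\wtilde U_{\reg}$ yields, after normalisation, a non-étale cover of $\wtilde U$ ramified over some $q \in \wtilde U_{\sing} \subseteq \wtilde X^\circ_{\sing}$; restricting to a small neighbourhood of $q$ contradicts simple connectedness in codimension one of the germ of $\wtilde X^\circ$ at $q$. Hence it suffices to shrink $X$ to a suitable Zariski-open $X^\circ \ni p$ and to produce a finite quasi-étale Galois cover $\gamma : \wtilde X^\circ \to X^\circ$ all of whose singularity germs are simply connected in codimension one.

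\emph{Step 2: the simultaneous cover.} The essential input is the finiteness of the local fundamental groups of klt singularities, which in the present circle of ideas one may view as part of, or a consequence of, the local form of Theorem~\ref{thm:main1}. Concretely, for $q \in X^\circ$ let $\Gamma_q \subseteq \what{\pi}_1(X^\circ_{\reg})$ be the image of $\what{\pi}_1$ of a small punctured analytic neighbourhood of $q$; then each $\Gamma_q$ is finite, and by constructibility of the analytic type of the germ along a stratification of $X^\circ_{\sing}$, together with quasi-compactness, only finitely many conjugacy classes of subgroups $\Gamma_1, \dots, \Gamma_m$ occur. Shrinking $X^\circ$ around $p$ still further, one arranges that for each $q$ the local fundamental group maps \emph{isomorphically} onto $\Gamma_q$, so that the finite étale cover of $X^\circ_{\reg}$ attached to an open normal subgroup $H \trianglelefteq \what{\pi}_1(X^\circ_{\reg})$ restricts, near $q$, to the cover given by $H \cap \Gamma_q$. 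As $\what{\pi}_1(X^\circ_{\reg})$ is profinite and each $\Gamma_i$ is finite, a descending-intersection argument yields an open normal subgroup $H$ with $H \cap \Gamma_i = \{1\}$ for all $i$. Let $\wtilde X^\circ$ be the normalisation of $X^\circ$ in the function field of the finite étale Galois cover of $X^\circ_{\reg}$ attached to $H$; since $X^\circ_{\sing}$ has codimension at least two, this $\wtilde X^\circ$ is finite, quasi-étale and Galois over $X^\circ$, and by construction the local fundamental group of $\wtilde X^\circ$ at every point over every $q$ is trivial. Thus every singularity germ of $\wtilde X^\circ$ is simply connected in codimension one, and Step 1 finishes the argument.

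\emph{Main obstacle.} The crux is the finiteness of the local fundamental groups of the klt space $X^\circ$, together with the claim that, after shrinking $X^\circ$ around $p$, these groups inject into $\what{\pi}_1(X^\circ_{\reg})$ uniformly in $q$; this is precisely what lets the group-theoretic intersection argument produce a single \emph{finite} cover that simultaneously works at every point, including along the positive-dimensional strata of $X^\circ_{\sing}$ through $p$. The other steps --- the reduction in Step 1, the passage to Galois closures, the profinite manipulation --- are formal. If the finiteness of klt local fundamental groups is unavailable at this stage, one argues instead along the lines of the proof of Theorem~\ref{thm:imm1}: over the fixed base $X^\circ$ one constructs a tower $X^\circ = Y_0 \leftarrow Y_1 \leftarrow Y_2 \leftarrow \cdots$ of finite Galois morphisms, étale in codimension one, in which $Y_{i+1} \to Y_i$ is the Galois closure of a non-trivial ramified cover supplied by a point where $Y_i$ fails to be simply connected in codimension one; then each $\gamma_i$ is non-étale, so Theorem~\ref{thm:mainSpec} forces the tower to terminate, and the terminal $Y_n$ is the desired $\wtilde X^\circ$ --- the delicate point being again to promote a local failure into a genuine ramified cover of $Y_i$ over the fixed base.
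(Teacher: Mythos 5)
Your Step 1 reduction is fine in spirit, but the construction in Step 2 rests on an input that is not available at this point and is, in substance, the theorem being proved. You take $\Gamma_q$ to be the image in $\what{\pi}_1(X^\circ_{\reg})$ of the fundamental group of a small neighbourhood of $q$ with the \emph{whole singular locus} removed, and you assert that each $\Gamma_q$ is finite, that only finitely many conjugacy classes occur, and that after shrinking around $p$ each local group maps isomorphically onto $\Gamma_q$. None of this follows from Xu's theorem: \cite{Xu12} gives finiteness of $\what{\pi}_1\bigl(\Link(X,q)\bigr)$, i.e.\ of a neighbourhood punctured at the single point $q$, whereas covers étale in codimension one are governed by $\what{\pi}_1\bigl(V \setminus X_{\sing}\bigr)$; when the singular locus is positive-dimensional the kernel of $\pi_1(V\setminus X_{\sing}) \to \pi_1(V\setminus\{q\})$ can be infinite, and controlling branching along positive-dimensional strata is exactly the hard content of Theorem~\ref{thm:main1} (this is what Propositions~\ref{prop:SrelFinite} and \ref{prop:Sgen}, via projection to the strata and induction on their dimension, are for). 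Moreover, the triviality of local groups upstairs must hold at \emph{every} point of $\wtilde X^\circ$, since the conclusion includes $U = X^\circ$; at points $q$ away from $p$ no amount of shrinking around $p$ can force the local-to-global map to be injective, and no argument is given for the finiteness of the set of classes $\Gamma_1,\dots,\Gamma_m$. So the ``simultaneous cover'' of Step 2 is not constructed.

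Your fallback is the right idea but, as stated, does not close the gap you yourself flag: a failure of the extension property is only witnessed over some \emph{smaller} Zariski neighbourhood $U \ni p$, and there is in general no way to promote it to a ramified cover of $Y_i$ over the fixed base $X^\circ$, so Theorem~\ref{thm:mainSpec} does not apply. The paper's proof resolves precisely this point: arguing by contradiction, at each stage it accepts the smaller neighbourhood, takes Galois closures (Theorem~\ref{thm:galoisClosure}, which preserves the branch locus), and builds a tower as in Diagram~\eqref{eq:fmapseq-1} in which the maps $\gamma_i$ are only quasi-finite and each $Y_i$ is finite Galois over a member $X_i$ of a \emph{descending chain} of neighbourhoods of $p$; applying Theorem~\ref{thm:main1} with $S = X_{\sing}$ then yields the contradiction. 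This shrinking-base mechanism is exactly why Theorem~\ref{thm:main1} is formulated with quasi-finite maps over a chain $X_0 \supseteq X_1 \supseteq \cdots$ rather than in the shape of Theorem~\ref{thm:mainSpec}; without it (or a genuine proof of the local finiteness and injectivity statements you assume), your argument does not go through.
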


Among its many properties, the covering constructed in
Theorem~\ref{thm:imm1local} can be seen as a simultaneous index-one cover for
all divisors on $X°$ that are $\bQ$-Cartier in a neighbourhood of $p$.  In fact,
the following much stronger result holds true.

\begin{thm}[Simultaneous index-one cover]\label{thm:sidxc}
  In the setting of Theorem~\ref{thm:imm1local}, the following holds for any
  Zariski-open neighbourhood $U = U(p) ⊆ X°$ with preimage $\wtilde U =
  γ^{-1}(U)$.
  \begin{enumerate}
  \item\label{il:mc1l} If $\wtilde D$ is any $\bQ$-Cartier divisor on $\wtilde
    U$, then $\wtilde D$ is Cartier.
  \item\label{il:mc2l} If $D$ is any $\bQ$-Cartier divisor on $U$, then $(\#
    \Gal(γ)) · D$ is Cartier.
  \end{enumerate}
\end{thm}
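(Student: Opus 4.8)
The plan is to deduce both assertions from Theorem \ref{thm:imm1local} together with the local structure of $\bQ$-Cartier divisors on a klt space. First observe that it suffices to prove \itemref{il:mc1l}: if every $\bQ$-Cartier divisor on $\wtilde U$ is Cartier, then for a $\bQ$-Cartier divisor $D$ on $U$ the pullback $\gamma^* D$ is $\bQ$-Cartier, hence Cartier, on $\wtilde U$; pushing forward and using that $\gamma$ is finite of degree $\#\Gal(\gamma)$ and étale in codimension one (so $\gamma_* \gamma^* \mathcal{O}_U(mD) = \mathcal{O}_U(mD)^{\oplus \#\Gal(\gamma)}$ as reflexive sheaves for the appropriate $m$ clearing denominators), one concludes that $(\#\Gal(\gamma)) \cdot D$ is Cartier by a standard norm/trace argument on the invertible sheaf $\gamma^*\mathcal{O}_{\wtilde U}(D)$. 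Concretely, if $L$ is an invertible sheaf on $\wtilde U$ with $L \cong \mathcal{O}(\gamma^* D)$, then the norm $\mathrm{Nm}_\gamma L$ is invertible on $U$ and agrees in codimension one with $\mathcal{O}_U((\#\Gal(\gamma)) D)$; since both are reflexive and $U$ is normal, they are equal.

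For \itemref{il:mc1l}, let $\wtilde D$ be $\bQ$-Cartier on $\wtilde U$, choose $m \in \bN^+$ with $m\wtilde D$ Cartier, and consider the cyclic cover $\delta : Z \to \wtilde U$ associated to a local trivialisation of $\mathcal{O}_{\wtilde U}(m \wtilde D)$ — the classical index-one (or "canonical") cover construction. Shrinking $U$ around a point if necessary, we may assume $\wtilde D$ is $\bQ$-Cartier everywhere on $\wtilde U$; the cover $\delta$ is then finite, surjective, and étale in codimension one (it ramifies only over the non-Cartier locus of $\wtilde D$, which lies in $\wtilde U_{\reg}^{\,c}$, i.e. in codimension $\geq 2$, since $\wtilde D$ is automatically Cartier on the smooth locus). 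The key point is that $Z \to \wtilde U$ is then an étale cover of $\wtilde U_{\reg}$ in the relevant sense, so by the defining property of $\wtilde X°$ in Theorem \ref{thm:imm1local} — namely $\what\pi_1(\wtilde U_{\reg}) \cong \what\pi_1(\wtilde U)$, equivalently every finite étale cover of $\wtilde U_{\reg}$ extends to a finite étale cover of $\wtilde U$ — the cover $\delta$ is itself étale over all of $\wtilde U$. An étale cyclic cover of a normal variety splits the line bundle it is built from: on $Z$ the pullback $\delta^* \mathcal{O}(\wtilde D)$ becomes Cartier because $\delta^* \mathcal{O}(m\wtilde D)$ acquires an $m$-th root, but more directly, étaleness of $\delta$ forces the original cyclic-cover data to be trivial, i.e. $\mathcal{O}_{\wtilde U}(m\wtilde D)$ already has an $m$-th root as an invertible sheaf (equivalently, the torsion class of $\wtilde D$ in $\Pic(\wtilde U_{\reg})/\text{(image of }\Pic\wtilde U)$, or in the local class group, vanishes). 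Hence $\wtilde D$ is Cartier.

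The main obstacle is the bookkeeping needed to pass back and forth between "étale cover of $\wtilde U_{\reg}$" and "étale cover of $\wtilde U$," and to ensure the cyclic cover $\delta$ is genuinely étale \emph{in codimension one} over $\wtilde U$ so that Theorem \ref{thm:imm1local} applies — this uses purity of the branch locus (as in Remark \ref{rem:main}) to see that the branch locus of an index-one cover is contained in the non-lci, hence non-smooth, locus of $\wtilde U$. A secondary subtlety is that Theorem \ref{thm:imm1local} is stated for all open $U \subseteq X°$ simultaneously, so one must be careful that the construction of $\delta$ and the conclusion are compatible with shrinking $U$; this is harmless because Cartier-ness is local and restriction of étale covers stays étale. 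One should also remark that the bound $\#\Gal(\gamma)$ in \itemref{il:mc2l} is not claimed to be optimal — it simply reflects that $\gamma^*$ kills the local class-group torsion and $\gamma_* \gamma^*$ multiplies divisor classes by the degree.
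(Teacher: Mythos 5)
Your route for \ref{il:mc1l} is the paper's route: form the cyclic (index-one) cover attached to $\wtilde D$, note it is étale over $\wtilde U_{\reg}$ because it branches only over the non-Cartier locus of $\wtilde D$, invoke Theorem~\ref{thm:imm1local} to extend this cover étale-ly over $\wtilde U$, identify the extension with the cyclic cover itself (this uses the uniqueness part of Theorem~\ref{thm:ZMT-eq}, which you should make explicit), and conclude that $\wtilde D$ is Cartier since the cover branches exactly where it is not. The genuine gap is in where your cover $\delta$ is defined. The index-one cover requires a trivialisation $\sO_{\wtilde U}(m\wtilde D) \cong \sO_{\wtilde U}$, which need not exist on all of $\wtilde U$; one only has local trivialisations. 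Your remedy, ``shrinking $U$ around a point if necessary,'' destroys precisely the hypothesis you need: the conclusion of Theorem~\ref{thm:imm1local} is available only for preimages $\gamma^{-1}(V)$ of Zariski-open neighbourhoods $V$ \emph{of the distinguished point} $p$, and the extension property for étale covers of the regular locus does \emph{not} pass to arbitrary smaller open sets — your remark that the shrinking is ``harmless because \dots restriction of étale covers stays étale'' argues in the wrong direction, since what is needed is extension, not restriction, of covers. Thus if you shrink around the point $x$ at which Cartier-ness is to be checked you can no longer apply the theorem, while shrinking around $p$ may lose $x$. The paper fills this gap with a short but essential argument: the set $\gamma^{-1}(p) \cup \gamma^{-1}(\gamma(x))$ is finite, so there is a Zariski-open $\wtilde W \subseteq \wtilde U$ containing it on which $\sO(n\wtilde D)$ is trivial (Fact~\ref{fact:tlbfs}); intersecting the $\Gal(\gamma)$-translates of $\wtilde W$ yields a Galois-invariant open set, i.e.\ one of the form $\gamma^{-1}(V)$ with $p,\gamma(x) \in V$, on which the index-one cover exists and Theorem~\ref{thm:imm1local} applies. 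Without this step $\delta$ is simply not defined and the argument does not close.

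For \ref{il:mc2l} your norm idea coincides in substance with the paper's proof, which trivialises the (now Cartier) divisor $\gamma^* D$ on $\gamma^{-1}(V)$ for a suitable neighbourhood $V$ of the point in question, sets $\wtilde F := \prod_{g \in \Gal(\gamma)} \wtilde f \circ g$, descends this Galois-invariant function to $F$ on $V$, and checks $\divisor F = (\#\Gal(\gamma))\cdot D$. Two details in your version need repair: since $\gamma$ is finite but in general not flat, the norm should be taken on local defining equations (again via Fact~\ref{fact:tlbfs} plus Galois-averaging of the trivialising set) rather than asserted as a norm of line bundles; and the claim $\gamma_*\gamma^*\sO_U(mD) \cong \sO_U(mD)^{\oplus \#\Gal(\gamma)}$ is false (already for étale Galois $\gamma$ one has $\gamma_*\gamma^*\sF \cong \sF \otimes \gamma_*\sO_{\wtilde U}$), although nothing in your argument actually uses it.
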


\begin{rem}[Global bound for the index of $\bQ$-Cartier divisors on klt spaces]
  Under the assumptions of Theorem~\ref{thm:imm1local}, it follows from
  \ref{il:mc2l} and from quasi-compactness of $X$ that there exists a number $N
  ∈ \bN^+$ such that $N · D$ is Cartier, whenever $D$ is a $\bQ$-Cartier
  divisor on $X$.
\end{rem}

\begin{rem}
  It seems to be known to experts that a covering space satisfying a weak
  analogue of \ref{il:mc1l} exists for spaces with rational singularities.  In
  contrast to the usual index-one covers discussed in higher-dimensional
  birational geometry, the Galois group of the morphism $γ$ need not be cyclic.
\end{rem}

As a last direct application of Theorem~\ref{thm:imm1}, we obtain a new proof of
a recent result by Chenyang Xu, \cite[Thm.~2]{Xu12}.  Note that this is not
independent of Xu's work, as the proof of Theorem~\ref{thm:imm1} uses
\cite[Thm.~1]{Xu12}; however, we do not use \cite{HMX14}.

\begin{thm}\label{thm:imm4}
  Let $X$ be a normal, complex, projective variety.  Assume that there exists a
  $\bQ$-Weil divisor $Δ$ such that $(X, Δ)$ is klt, and $-(K_X + Δ)$ is big and
  nef.  Then, the étale fundamental group $\what{π}_1(X_{\reg})$ is finite.
\end{thm}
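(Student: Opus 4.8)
The plan is to reduce, by means of Theorem~\ref{thm:imm1}, to a quasi-étale cover of $X$ on which the étale fundamental group of the regular locus already coincides with that of the whole space, and then to invoke the known finiteness statement for the \emph{entire} weak log Fano variety, \cite[Thm.~2]{Xu12}. Concretely, I would first apply Theorem~\ref{thm:imm1} to the klt pair $(X, \Delta)$. This produces a normal variety $\wtilde X$ --- necessarily projective, being finite over $X$ --- together with a finite, surjective Galois morphism $\gamma : \wtilde X → X$ that is étale in codimension one and satisfies $\what{\pi}_1(\wtilde X_{\reg}) \cong \what{\pi}_1(\wtilde X)$. It then suffices to show that $\what{\pi}_1(\wtilde X)$ is finite and that this forces $\what{\pi}_1(X_{\reg})$ to be finite as well.

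For the first of these, set $\wtilde\Delta := \gamma^*\Delta$. As $\gamma$ is étale in codimension one, its ramification divisor vanishes, so $K_{\wtilde X} + \wtilde\Delta = \gamma^*(K_X + \Delta)$; that is, $\gamma$ is crepant for this pair, and $\wtilde\Delta$ is again an effective boundary with coefficients in $[0,1)$. By the standard behaviour of klt pairs under crepant finite covers, $(\wtilde X, \wtilde\Delta)$ is klt, and since the pullback of a big and nef $\bQ$-divisor along a finite surjective morphism is big and nef, $-(K_{\wtilde X} + \wtilde\Delta) = \gamma^*\bigl(-(K_X+\Delta)\bigr)$ is big and nef. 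Thus $(\wtilde X, \wtilde\Delta)$ satisfies the hypotheses of \cite[Thm.~2]{Xu12}, whence $\what{\pi}_1(\wtilde X)$, and therefore also $\what{\pi}_1(\wtilde X_{\reg})$, is finite.

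For the second, observe that by purity of the branch locus the morphism $\gamma$ is étale over $X_{\reg}$ (compare Remark~\ref{rem:main}). Consequently $V := \gamma^{-1}(X_{\reg})$ is smooth, connected (it is a dense open subset of the irreducible variety $\wtilde X$) and contained in $\wtilde X_{\reg}$, and its complement $\wtilde X_{\reg} \setminus V$ lies inside $\gamma^{-1}(X_{\sing})$ and hence has codimension at least two in $\wtilde X_{\reg}$. Removing a closed subset of codimension at least two from a smooth variety leaves the étale fundamental group unchanged, so $\what{\pi}_1(V) \cong \what{\pi}_1(\wtilde X_{\reg})$ is finite. Finally $\gamma|_V : V → X_{\reg}$ is a finite étale cover of a connected variety, so $\what{\pi}_1(V)$ is isomorphic to an open subgroup of $\what{\pi}_1(X_{\reg})$ of finite index $\#\Gal(\gamma)$; a finite subgroup of finite index forces $\what{\pi}_1(X_{\reg})$ itself to be finite.

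The argument is essentially an assembly of the two cited inputs, so I do not anticipate a serious obstacle. The two points requiring genuine care are the descent of the weak log Fano and klt conditions along $\gamma$ --- which is exactly where étaleness in codimension one enters, through the crepancy identity $K_{\wtilde X} + \wtilde\Delta = \gamma^*(K_X + \Delta)$ and the consequent control on the coefficients of $\wtilde\Delta$ --- and the comparison of fundamental groups, which rests on purity of the branch locus together with the codimension-two invariance of the étale fundamental group of a smooth variety.
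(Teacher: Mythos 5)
Your overall strategy is exactly the paper's: the paper packages it as Proposition~\ref{prop:generalfinitenesscriterion}, applied to the class of weak log Fano varieties, and your verification that the klt and weak log Fano conditions ascend along the quasi-étale Galois cover from Theorem~\ref{thm:imm1} (via $K_{\wtilde X}+\wtilde\Delta=\gamma^*(K_X+\Delta)$ and \cite[Prop.~5.20]{KM98}) is precisely the paper's check of condition~\ref{il:Rx1}. Your final descent step --- $V=\gamma^{-1}(X_{\reg})$ is a big open subset of $\wtilde X_{\reg}$, hence has the same étale fundamental group, and is a connected finite étale cover of $X_{\reg}$, so $\what{\pi}_1(X_{\reg})$ contains a finite open subgroup of finite index --- is a perfectly good equivalent of the paper's argument, which instead completes the extension $1\to\pi_1(\gamma^{-1}(X_{\reg}^{an}))\to\pi_1(X_{\reg}^{an})\to\Gal(\gamma)\to 1$ profinitely using right-exactness of completion.

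The one genuine issue is the reference you lean on for finiteness of $\what{\pi}_1(\wtilde X)$. Xu's result \cite[Thm.~2]{Xu12} is not a finiteness statement for the \emph{entire} variety in the weak (big and nef) setting: it asserts finiteness of the étale fundamental group of the \emph{smooth locus} of a log Fano pair, i.e.\ with $-(K_X+\Delta)$ \emph{ample} --- it is exactly the result that Theorem~\ref{thm:imm4} generalises. As quoted it therefore does not apply to $(\wtilde X,\wtilde\Delta)$, whose anticanonical class is only big and nef, and invoking a big-and-nef version of it about smooth loci would essentially assume the theorem you are proving. Fortunately, what you actually need --- and all you need --- is that a klt weak log Fano variety has finite étale fundamental group of the whole space; this is known and follows from Takayama's simple connectedness theorem \cite[Thm.~1.1]{TakayamaSimpleConnectedness} (see also \cite[Cor.~1]{Zh06}), which is the input the paper uses. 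With that single substitution your proof is complete and coincides with the paper's.
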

 In Section~\ref{ssec:GZ} we show by way of example that Theorem~\ref{thm:imm4}
cannot be generalised to rationally connected varieties.

\subsection{Extension for flat sheaves on klt base spaces}

Consider a normal variety $X$ and a flat, locally free, analytic sheaf $\sF°$,
defined on the complex manifold $X_{\reg}^{an}$ associated with the smooth locus
$X_{\reg}$ of $X$.  In this setting, a fundamental theorem of Deligne,
\cite[II.5, Cor.~5.8 and Thm.~5.9]{Deligne70}, asserts that $\sF°$ is algebraic,
and thus extends to a coherent, algebraic sheaf $\sF$ on $X$.  If $X$ is klt, we
will show that Deligne's extended sheaf $\sF$ is again locally free and flat, at
least after passing to a quasi-étale cover.

\begin{thm}[Extension of flat, locally free sheaves]\label{thm:flat}
  Let $X$ be a normal, complex, quasi-projective variety.  Assume that there
  exists a $\bQ$-Weil divisor $Δ$ such that $(X, Δ)$ is klt.  Then, there exists
  a normal variety $\wtilde X$ and a finite, surjective Galois morphism $γ:
  \wtilde X → X$, étale in codimension one, such that the following holds.  If
  $\sG°$ is any flat, locally free, analytic sheaf on the complex space
  $\widetilde X_{\reg}^{an}$, there exists a flat, locally free, algebraic sheaf
  $\sG$ on $\wtilde X$ such that $\sG°$ is isomorphic to the analytification of
  $\sG|_{\wtilde X_{\reg}}$.
\end{thm}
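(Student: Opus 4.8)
The plan is to take for $\gamma\colon \widetilde X \to X$ exactly the cover produced by Theorem~\ref{thm:imm1}; then $\gamma$ is already normal, finite, surjective, Galois, and étale in codimension one, so only the extension property needs checking. Fix a flat, locally free, analytic sheaf $\sG^\circ$ of rank $r$ on $\widetilde X_{\reg}^{an}$. Under the standard equivalence between such sheaves and representations of the fundamental group, $\sG^\circ$ corresponds to a monodromy representation $\rho\colon \pi_1(\widetilde X_{\reg}^{an}) \to \GL(r,\bC)$; write $\Gamma := \rho\bigl(\pi_1(\widetilde X_{\reg}^{an})\bigr)$. Since $\widetilde X$ is normal, its singular locus has complex codimension at least two, and so the inclusion of the smooth locus induces a \emph{surjection} $\iota_*\colon \pi_1(\widetilde X_{\reg}^{an}) \onto \pi_1(\widetilde X^{an})$; set $K := \ker\iota_*$. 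Granting for the moment that $\rho(K) = \{1\}$, the representation $\rho$ descends to $\bar\rho\colon \pi_1(\widetilde X^{an}) \to \GL(r,\bC)$; the associated flat, locally free, analytic sheaf $\sG^{\circ\circ}$ on all of $\widetilde X^{an}$ then restricts on $\widetilde X_{\reg}^{an}$ to a sheaf with monodromy $\rho$, hence to a sheaf isomorphic to $\sG^\circ$.

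The heart of the argument is the vanishing $\rho(K) = \{1\}$, and this is where I expect the main subtlety to lie. Both hypotheses enter. First, $\Gamma$ is a finitely generated subgroup of $\GL(r,\bC)$, since $\pi_1$ of a complex variety is finitely generated; by Malcev's theorem $\Gamma$ is therefore residually finite, i.e.\ the intersection of its finite-index normal subgroups is trivial. Second, the isomorphism $\what\iota_*\colon \what\pi_1(\widetilde X_{\reg}) \xrightarrow{\ \sim\ } \what\pi_1(\widetilde X)$ of Theorem~\ref{thm:imm1}, combined with surjectivity of $\iota_*$, shows that every finite-index normal subgroup of $\pi_1(\widetilde X_{\reg}^{an})$ is the $\iota_*$-preimage of a finite-index normal subgroup of $\pi_1(\widetilde X^{an})$ — this is just the correspondence between finite-index subgroups and open subgroups, applied to both groups and transported along $\what\iota_*$ — and in particular every such subgroup contains $K$. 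Now for any finite-index normal subgroup $\Gamma_0 \trianglelefteq \Gamma$, the preimage $\rho^{-1}(\Gamma_0)$ is finite-index and normal in $\pi_1(\widetilde X_{\reg}^{an})$, hence contains $K$, hence $\rho(K) \subseteq \Gamma_0$; intersecting over all $\Gamma_0$ and invoking residual finiteness of $\Gamma$ gives $\rho(K) = \{1\}$. The delicate point is precisely that Theorem~\ref{thm:imm1} controls only the \emph{profinite} completions, so it is residual finiteness of the linear image $\Gamma$ — rather than of $\pi_1(\widetilde X_{\reg}^{an})$ itself — that bridges the gap to the generally infinite representation $\rho$.

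It remains to algebraise $\sG^{\circ\circ}$. By Deligne's theorem recalled above (applied to $\widetilde X_{\reg} \subset \widetilde X$), the sheaf $\sG^\circ$ is algebraic: there is an algebraic, locally free sheaf $\sG_{\reg}$ on $\widetilde X_{\reg}$ with $(\sG_{\reg})^{an} \cong \sG^\circ$, and it extends to a coherent algebraic sheaf on $\widetilde X$; replacing the latter by its reflexive hull we obtain a coherent, reflexive, algebraic sheaf $\sG$ on $\widetilde X$ with $\sG|_{\widetilde X_{\reg}} \cong \sG_{\reg}$. The analytic sheaves $\sG^{an}$ and $\sG^{\circ\circ}$ on $\widetilde X^{an}$ are both reflexive and both restrict to $\sG^\circ$ over the big open subset $\widetilde X_{\reg}^{an}$; since $\sHom(\sG^{an}, \sG^{\circ\circ})$ is reflexive on the normal space $\widetilde X^{an}$, the isomorphism over $\widetilde X_{\reg}^{an}$ extends across the codimension-two locus, and likewise its inverse, so $\sG^{an} \cong \sG^{\circ\circ}$. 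As the right-hand side is locally free and flat, faithful flatness of $\sO_{\widetilde X,x} \to \sO_{\widetilde X^{an},x}$ shows that $\sG$ is locally free; and $\sG$ is flat, the Deligne connection on $\sG_{\reg}$ extending by reflexivity to an integrable connection $\sG \to \sG \otimes \Omega_{\widetilde X}^{[1]}$. Finally $\sG|_{\widetilde X_{\reg}} \cong \sG_{\reg}$ has analytification $\sG^\circ$, which is what the theorem demands.
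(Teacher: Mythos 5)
Your proposal is correct and follows essentially the same route as the paper: take the cover of Theorem~\ref{thm:imm1}, use surjectivity of $ι_*$, Malcev's theorem (residual finiteness of the finitely generated linear monodromy image), and the isomorphism $\what{ι}_*$ to descend the representation, then invoke Deligne's theorem for algebraicity. Your way of descending $ρ$ — showing $ρ(\ker ι_*)=\{1\}$ via the correspondence between finite-index subgroups and open subgroups of the profinite completions — is just a reformulation of the paper's construction of $ρ$ as $\what{ρ}°◦(\what{ι}_*)^{-1}◦c$, and your fleshed-out comparison of Deligne's reflexive extension with the flat analytic extension fills in a step the paper leaves terse.
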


Theorem~\ref{thm:flat} follows as a consequence of Theorem~\ref{thm:imm1}.
Except for the algebraicity assertion, we do not use Deligne's result in our
proof.  In order to avoid confusion, we briefly recall the definition of flat
sheaves.

\begin{defn}[\protect{Flat locally free sheaf}]\label{defn:flat}
  If $Y$ is any complex algebraic variety, and $\sG$ is any locally free,
  analytic sheaf on the underlying complex space $Y^{an}$, we call $\sG$
  \emph{flat} if it is defined by a representation of the topological
  fundamental group $π_1(Y^{an})$.  A locally free, algebraic sheaf on $Y$ is
  called flat if and only if the associated analytic sheaf is flat.
\end{defn}

Using the partial confirmation of the Lipman-Zariski conjecture shown in
\cite{GKKP11}, we obtain the following criterion for a klt space to have
quotient singularities.  We also obtain a first criterion to guarantee that a
given projective variety is a quotient of an Abelian variety.

\begin{cor}[Criterion for quotient singularities and torus quotients]\label{cor:flattangent}
  In the setting of Theorem~\ref{thm:flat}, if $\sT_{X_{\reg}}$ is flat, then
  $\wtilde X$ is smooth and $X$ has only quotient singularities.  If $X$ is
  additionally assumed to be projective, then there exists an Abelian variety
  $A$ and a finite Galois morphism $A → X$ that is étale in codimension
  one.
\end{cor}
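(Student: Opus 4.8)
The plan is to apply the extension theorem for flat sheaves, Theorem~\ref{thm:flat}, to the tangent sheaf, and then to run the partial Lipman-Zariski theorem of \cite{GKKP11} on the cover $\wtilde X$.

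First I would check that $\sT_{\wtilde X_{\reg}}$ is flat. Since $γ : \wtilde X → X$ is étale in codimension one and $X_{\reg}$ is regular, purity of the branch locus shows that $γ$ is in fact étale over all of $X_{\reg}$ (cf.~Remark~\ref{rem:main}); hence $V := γ^{-1}(X_{\reg})$ is smooth, so $V ⊆ \wtilde X_{\reg}$, and $\wtilde X_{\reg} \setminus V ⊆ γ^{-1}(X_{\sing})$ has codimension at least two in $\wtilde X_{\reg}$. On $V$ one has $\sT_V \cong (γ|_V)^* \sT_{X_{\reg}}$, which is flat because $\sT_{X_{\reg}}$ is flat by hypothesis and pull-backs and restrictions of flat sheaves are flat. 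As removing a closed analytic set of codimension at least two from the connected complex manifold $\wtilde X_{\reg}^{an}$ leaves the topological fundamental group unchanged, the monodromy representation defining this flat structure transports to a representation of $π_1(\wtilde X_{\reg}^{an})$, hence to a flat, locally free, analytic sheaf on $\wtilde X_{\reg}^{an}$ whose restriction to $V$ is $\sT_V^{an}$. This sheaf and $\sT_{\wtilde X_{\reg}}^{an}$ are locally free — hence reflexive — and agree on the complement $V$ of a codimension-two set, so they are isomorphic; thus $\sT_{\wtilde X_{\reg}}$ is flat.

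Next I would feed $\sG° := \sT_{\wtilde X_{\reg}}^{an}$ into Theorem~\ref{thm:flat}, which yields a flat, locally free, \emph{algebraic} sheaf $\sG$ on $\wtilde X$ with $(\sG|_{\wtilde X_{\reg}})^{an} \cong \sT_{\wtilde X_{\reg}}^{an}$. On the normal complex space $\wtilde X^{an}$, whose singular locus has codimension at least two, both $\sG^{an}$ and the reflexive tangent sheaf $\sT_{\wtilde X}^{an}$ restrict to $\sT_{\wtilde X_{\reg}}^{an}$ over $\wtilde X_{\reg}^{an}$; since a reflexive sheaf on a normal space is the push-forward of its restriction to the complement of a codimension-two set, it follows that $\sT_{\wtilde X}^{an} \cong \sG^{an}$, which is locally free. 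Because analytification is faithfully flat on local rings, $\sT_{\wtilde X}$ is then locally free as an algebraic sheaf. As $\wtilde X$ is klt — being quasi-étale over the klt space $X$ — the partial confirmation of the Lipman-Zariski conjecture, \cite{GKKP11}, shows that $\wtilde X$ is smooth. Finally, since $γ$ is Galois with group $G := \Gal(γ)$, for any $x ∈ X$ and $y ∈ γ^{-1}(x)$ an analytic neighbourhood of $x$ is the quotient of a (smooth, invariant) analytic neighbourhood of $y$ by the stabiliser of $y$ in $G$; hence $X$ has quotient singularities.

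If in addition $X$ — and therefore $\wtilde X$ — is projective, then $\wtilde X$ is a smooth projective variety with flat tangent bundle, so by Chern-Weil theory $c_1(\wtilde X) = 0$ and $c_2(\wtilde X) = 0$ in $H^*(\wtilde X, \bR)$. Yau's theorem provides a Ricci-flat Kähler metric on $\wtilde X$, and, as recalled in the introduction, a Ricci-flat compact Kähler manifold with vanishing second Chern class is an étale quotient $T/H$ of a compact complex torus $T$ by a finite group $H$ acting freely. Then $A := T → \wtilde X$ is finite, Galois with group $H$, and étale, so in particular étale in codimension one; and $A$, being finite over the projective variety $\wtilde X$, is projective, hence an Abelian variety, as claimed. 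I expect the combination of the first two steps to be the main obstacle: Theorem~\ref{thm:flat} controls the extended sheaf $\sG$ only over the smooth locus, so the real work is the reflexive-sheaf bookkeeping across $\wtilde X_{\sing}$ that identifies $\sG$ with $\sT_{\wtilde X}$, together with the verification that $\wtilde X$ is klt so that the Lipman-Zariski criterion applies; by contrast, the quotient-singularities claim is elementary, and the projective statement is a direct appeal to Yau's theorem and the classical torus-quotient structure result.
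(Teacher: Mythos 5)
Your proposal is correct and follows the paper's own proof in all essentials: pull back $\sT_{X_{\reg}}$ along $γ$, extend it as a flat, locally free sheaf across $\wtilde X$ via Theorem~\ref{thm:flat}, identify the extension with $\sT_{\wtilde X}$ using reflexivity, apply the Lipman--Zariski result of \cite{GKKP11}, deduce quotient singularities from the Galois quotient structure, and settle the projective case with the classical flat-tangent-bundle/torus-quotient theorem (the paper simply cites \cite[Chap.~4, Cor.~4.15]{Kob87}, and then adds a Galois closure over $X$ that the stated conclusion about $A → \wtilde X$ does not actually require). The one step to tighten is your justification for Lipman--Zariski: being quasi-étale over $X$ only yields that the \emph{pair} $(\wtilde X, γ^*Δ)$ is klt, whereas the cited theorem needs $\wtilde X$ itself (empty boundary) to be klt; as in the paper, observe that local freeness of $\sT_{\wtilde X}$ makes $K_{\wtilde X}$ Cartier, so that $(\wtilde X, \emptyset)$ is klt by \cite[Cor.~2.35]{KM98} — a one-line addition using exactly what you have already established.
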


\subsection{Characterisation of torus quotients: varieties with vanishing Chern classes}

Consider a Ricci-flat, compact Kähler manifold $X$ whose second Chern class
vanishes.  As a classical consequence of Yau's theorem \cite{MR480350} on the
existence of a Kähler-Einstein metric, $X$ is then covered by a complex torus;
cf.~\cite[Thm.~12.4.3]{MR0278248} and \cite[Ch.~IV, Cor.~4.15]{Kob87}.  Building
on our main result, we generalise this to the singular case, when $X$ has
terminal or a special type of klt singularities.

\begin{thm}[Characterisation of torus quotients]\label{thm:torus}
  Let $X$ be a normal, complex, projective variety of dimension $n$ with at
  worst klt singularities.  Assume that $X$ is smooth in codimension two and
  that the canonical divisor is numerically trivial, $K_X \equiv 0$.  Further,
  assume that there exist ample divisors $H_1, …, H_{n-2}$ on $X$ and a
  desingularisation $π: \widetilde X → X$ such that $c_2(\sT_{\wtilde X}) ·
  π^*(H_1) \cdots π^*(H_{n-2}) = 0$.

  Then, there exists an Abelian variety $A$ and a finite, surjective, Galois
  morphism $A → X$ that is étale in codimension two.
\end{thm}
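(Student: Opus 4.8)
The plan is to show that, after replacing $X$ by a suitable finite quasi-étale cover, the reflexive tangent sheaf $\sT_X := (\Omega^1_X)^{**}$ restricts to a flat bundle on $X_{\reg}$, and then to conclude by Corollary~\ref{cor:flattangent}. Throughout we will freely pass to finite, quasi-étale covers of $X$: since $X$ is smooth in codimension two, the singular locus $\sing X$ has codimension $\geq 3$, so any such cover is étale over $X_{\reg}$, its non-isomorphism locus lies over $\sing X$, and hence the covering space again satisfies all hypotheses of the theorem --- in particular it is again smooth in codimension two, and the vanishing condition on $c_2$ survives because all relevant intersection numbers are computed on a general complete intersection surface contained in the smooth locus.

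First I would reduce to the case $K_X \sim \sO_X$. By the abundance theorem for klt pairs of numerical Kodaira dimension zero (Kawamata, Nakayama), $K_X$ is torsion in $\Pic(X)$, so the global index-one cover is a cyclic, quasi-étale, Galois morphism trivialising $K_X$; after replacing $X$ by it we may assume $K_X \sim \sO_X$, so that $c_1(\sT_X) = 0$. Since $K_X \equiv 0$ the variety $X$ is not uniruled, and therefore generic semipositivity of cotangent sheaves of non-uniruled varieties implies that $\sT_X$ is slope-semistable with respect to every polarisation, in particular with respect to $(H_1, \dots, H_{n-2}, A)$ for any auxiliary ample divisor $A$. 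Fixing a resolution $\pi \colon \wtilde X \to X$ that is an isomorphism over $X_{\reg}$, the hypothesis on $c_2$ then says precisely that the discriminant of $\sT_X$ pairs to zero with $H_1 \cdots H_{n-2}$: indeed $c_1(\sT_X)^2 \cdot H_1 \cdots H_{n-2} = 0$, and
\[
  \what{c}_2(\sT_X) \cdot H_1 \cdots H_{n-2} \;=\; c_2(\sT_{\wtilde X}) \cdot \pi^* H_1 \cdots \pi^* H_{n-2} \;=\; 0,
\]
the first equality because $\sing X$ has codimension $\geq 3$, so $H_1 \cdots H_{n-2}$ is represented by a smooth surface lying in $\pi^{-1}(X_{\reg}) \cong X_{\reg}$, on which $\pi^*\sT_X$ and $\sT_{\wtilde X}$ coincide.

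The crux is now to deduce that, after one more quasi-étale cover, $\sT_{X_{\reg}}$ is flat; morally this is the equality case of the Bogomolov--Gieseker inequality for semistable reflexive sheaves on klt spaces, and there are two natural routes. The analytic one: the singular Ricci-flat Kähler--Einstein metric on $X$ produced by Eyssidieux--Guedj--Zeriahi is Hermite--Einstein on $\sT_{X_{\reg}}$, and the Chern--Weil identity together with $\what{c}_2(\sT_X) \cdot H_1 \cdots H_{n-2} = 0$ forces its curvature to vanish. The algebraic one: restrict to a general complete intersection surface $S$, which --- as $\sing X$ has codimension $\geq 3$ --- is a smooth projective surface contained in $X_{\reg}$; by Mehta--Ramanathan $\sT_{X_{\reg}}|_S$ is semistable, with $c_1 = 0$ and $c_2 = 0$ by the previous step, hence flat by the surface case of Simpson's correspondence; one then spreads the flat structure out over all of $X_{\reg}$ using a Lefschetz-type comparison of fundamental groups, which is vacuous when $n = 2$. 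I expect this step to be the main obstacle: genuine flatness on the non-compact manifold $X_{\reg}$ is strictly more than semistability with vanishing discriminant, and controlling either the curvature near $\sing X$ or the spreading-out from the surface $S$ is the delicate point.

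Granting this, $\sT_{X_{\reg}}$ is flat and $X$ is projective, so Corollary~\ref{cor:flattangent} supplies a quasi-étale Galois cover $\gamma \colon \wtilde X \to X$, an Abelian variety $A$, and a finite Galois morphism $A \to \wtilde X$ that is étale in codimension one; since $\wtilde X$ is smooth, purity of the branch locus upgrades $A \to \wtilde X$ to an étale morphism. The composite $A \to \wtilde X \xrightarrow{\gamma} X$ is then finite, surjective, and étale over $X_{\reg}$; passing to its Galois closure $\widehat A \to X$ yields a finite, surjective, Galois morphism which is still étale over $X_{\reg}$, and $\widehat A$, being a connected étale cover of the Abelian variety $A$, is itself an Abelian variety. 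Finally, because $X$ is smooth in codimension two, every codimension-two point of $X$ lies in $X_{\reg}$, so $\widehat A \to X$ is étale in codimension two --- which is the assertion of the theorem. Apart from the flatness step, the only thing needing care is the bookkeeping that guarantees that the hypotheses, above all smoothness in codimension two and the $c_2$-vanishing, persist under each of the several quasi-étale covers taken along the way.
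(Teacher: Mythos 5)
Your overall strategy is the paper's: reduce to canonical singularities with trivial canonical class via abundance and the index-one cover, use semistability of $\sT_X$, restrict to a general complete intersection surface $S\subset X_{\reg}$, get flatness of the restriction from Simpson, and finish with Corollary~\ref{cor:flattangent}. But the step you yourself flag as ``the main obstacle'' is a genuine gap, and it is exactly the step for which the paper needs its main theorem. Knowing that $\sF|_S$ is flat and that $π_1(S^{an})\to π_1(X_{\reg}^{an})$ is an isomorphism (Goresky--MacPherson) does \emph{not} by itself make $\sT_{X_{\reg}}$ flat: it only produces a representation $τ$ of $π_1(X_{\reg}^{an})$, hence a flat bundle $\sF_τ°$ on $X_{\reg}$ whose restriction to $S$ agrees with $\sF|_S$, and one must still prove that $\sF_τ°$ is isomorphic to $\sT_{X_{\reg}}$ itself. ``Spreading out'' is not automatic, and your analytic alternative (singular Ricci-flat metrics and Chern--Weil) runs into the same unaddressed difficulty of controlling curvature near $\sing X$.

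The paper closes this gap in Theorem~\ref{thm:svcc} by first passing to the quasi-étale Galois cover $γ:\wtilde X\to X$ furnished by Theorem~\ref{thm:flat} --- i.e.\ by the main result Theorem~\ref{thm:imm1} --- on which every flat analytic bundle on $\wtilde X_{\reg}$ extends to a flat, locally free, \emph{algebraic} sheaf on all of $\wtilde X$. This turns $\sF_τ°$ into a global sheaf $\sF_τ$ that can be compared with $\sF=(γ^*\sT_X)^{**}$ as algebraic sheaves on $\wtilde X$. The comparison is then carried out by combining boundedness of the family of rank-$r$ flat bundles on a variety with rational singularities (Proposition~\ref{prop:flatbounded}) with the iterated Bertini-type theorem for isomorphism classes of reflexive sheaves (Corollary~\ref{cor:bertstabY}), which requires the surface $S$ to be cut out by hyperplanes of sufficiently high degree chosen relative to that bounded family; only then does $\sF|_S\cong\sF_τ|_S$ imply $\sF\cong\sF_τ$, hence flatness of $\sT_{\wtilde X}$. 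Your proposal never invokes the paper's main theorem or any substitute for this extension-plus-boundedness mechanism, so the crucial implication from the surface to the whole space is missing. (Your concluding bookkeeping --- Corollary~\ref{cor:flattangent}, purity, Galois closure, and étaleness in codimension two from $\codim\sing X\geq 3$ --- matches the paper and is fine.)
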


In fact, the converse is also true; see Theorem~\ref{chern} for a precise
statement.  The three-dimensional case has been settled by Shepherd-Barron and
Wilson in \cite{SBW94}, and our strategy of proof for Theorem~\ref{thm:torus}
partly follows their line of reasoning.  Apart from our main result, the proof of
Theorem~\ref{thm:torus} relies on the semistability of the tangent sheaf of
varieties with vanishing first Chern class, on Simpson's flatness results for
semistable sheaves, \cite[Cor.~3.10]{MR1179076}, and on the partial solution of
the Lipman-Zariski conjecture mentioned above.

\begin{rem}[Vanishing of Chern classes]
  The condition on the intersection numbers posed in Theorem~\ref{thm:torus} is
  a way of saying ``$c_2(X)=0$'' that avoids the technical complications with
  the definition of Chern classes on singular spaces.
  Section~\ref{sec:ChernSing} discusses this in detail.
\end{rem}

\begin{rem}[Terminal varieties, Generalisations]
  Projective varieties with terminal singularities are smooth in codimension
  two, \cite[Cor.~5.18]{KM98}.  Theorem~\ref{thm:torus} therefore applies to
  this class of varieties.  From the point of view of the minimal model program,
  this seems a very natural setting for our problem.

  Using an orbifold version of the second Chern class, Shepherd-Barron and
  Wilson \cite{SBW94} are able to treat threefolds whose singular set has
  codimension two.  It is conceivable that with sufficient technical work, using our line of argumentation a
  similar result could also be obtained in the higher-dimensional setting.  We
  have chosen not to pursue these generalisations here.
\end{rem}

As one important step in the proof of Theorem~\ref{thm:torus}, we generalise
classical flatness results \cite{UhlenbeckYau86, Kob87, MR1179076, MR1463962}
for semistable vector bundles with vanishing first and second Chern classes to
our singular setup, and obtain the following result:

\begin{thm}[Flatness of semistable sheaves with vanishing first and second Chern classes]\label{thm:svcc}
  Let $X$ be an $n$-dimensional, normal, complex, projective variety, smooth in
  codimension two.  Assume that there exists a $\bQ$-Weil divisor $Δ$ such that
  $(X, Δ)$ is klt.  Let $H$ be an ample Cartier divisor on $X$, and $\sE$ be a
  reflexive, $H$-slope-semistable sheaf.  Assume that the following intersection
  numbers vanish
  \begin{equation}\label{eq:svcc}
    c_1(\sE) · H^{n-1} = 0, \quad c_1(\sE)² · H^{n-2} = 0, \quad\text{and}\quad c_2(\sE) · H^{n-2} = 0.
  \end{equation}
  Then, there exists a normal variety $\wtilde X$ and a finite, surjective
  Galois morphism $γ: \wtilde X → X$, étale in codimension one, such that
  $(γ^*\sE)^{**}$ is locally free and flat, that is, $(γ^*\sE)^{**}$ is given by
  a linear representation of $π_1(\wtilde X)$.
\end{thm}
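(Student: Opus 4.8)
The plan is to reduce the flatness statement to the smooth locus, invoke the classical Simpson–Uhlenbeck–Yau machinery there, and then use Theorem~\ref{thm:imm1} to extend the resulting flat bundle across the singularities. First I would restrict $\sE$ to the smooth locus $X_{\reg}$ and observe that, since $X$ is smooth in codimension two, all the intersection-theoretic hypotheses \eqref{eq:svcc} are encoded faithfully in Chern-class data on a desingularisation, hence control the restriction $\sE|_{X_{\reg}}$ in the usual sense. Semistability of $\sE$ with respect to $H$, together with $c_1(\sE)\cdot H^{n-1} = 0$ and $c_1(\sE)^2\cdot H^{n-2}=0$ and $c_2(\sE)\cdot H^{n-2}=0$, should via Simpson's correspondence \cite[Cor.~3.10]{MR1179076} (or the Uhlenbeck–Yau theorem on the locus where $\sE$ is locally free) produce on $(X_{\reg})^{an}$ — or at least on the complement of a further closed set of codimension $\geq 2$ in $X_{\reg}$ — a locally free analytic sheaf $\sE^\circ$ that is flat, i.e.\ given by a representation of the topological fundamental group. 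The point here is that on a smooth quasi-projective variety a reflexive semistable sheaf with these vanishing Chern numbers is actually locally free in codimension two and underlies a polystable Higgs bundle with trivial Higgs field, hence a flat bundle; one has to be slightly careful to run the argument on the smooth, non-compact $X_{\reg}$ rather than on $X$ itself, but the vanishing of the relevant intersection numbers is exactly what licenses this.

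Next I would apply Theorem~\ref{thm:imm1} to the klt pair $(X,\Delta)$ to obtain a normal variety $\wtilde X$ and a finite surjective Galois morphism $\gamma\colon \wtilde X \to X$, étale in codimension one, such that every finite étale cover of $\wtilde X_{\reg}$ extends to one of $\wtilde X$, equivalently such that $\what\iota_*\colon\what\pi_1(\wtilde X_{\reg})\to\what\pi_1(\wtilde X)$ is an isomorphism. Pulling back, $(\gamma^*\sE)^{**}$ is again reflexive and $H':=\gamma^*H$-semistable (semistability is preserved under finite covers étale in codimension one, by the standard argument comparing slopes of saturated subsheaves and their preimages), and the vanishing \eqref{eq:svcc} persists after multiplying by the degree of $\gamma$. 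Restricting $(\gamma^*\sE)^{**}$ to $\wtilde X_{\reg}$ and feeding it back into the smooth-case analysis of the previous paragraph yields a flat locally free analytic sheaf $\sG^\circ$ on $\wtilde X_{\reg}^{an}$, given by a representation $\rho\colon\pi_1(\wtilde X_{\reg}^{an})\to \GL_r(\bC)$.

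Now the extension step: because $\rho$ has image in $\GL_r(\bC)$, its image under the profinite completion map factors as a representation of $\what\pi_1(\wtilde X_{\reg})$ up to choosing the appropriate finite quotients controlling the monodromy — more to the point, the relevant finite étale covers of $\wtilde X_{\reg}$ on which $\sG^\circ$ becomes trivializable extend, by Theorem~\ref{thm:imm1}\eqref{il:mc1}, to finite étale covers of $\wtilde X$, so the locally free sheaf $\sG^\circ$ itself extends to a locally free analytic sheaf on all of $\wtilde X^{an}$, which by Deligne's algebraicity theorem \cite{Deligne70} (invoked exactly as in Theorem~\ref{thm:flat}) is algebraic; call it $\sG$. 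Here flatness on $\wtilde X$ follows because the extended bundle is by construction defined by a representation of $\pi_1(\wtilde X^{an})$, using the isomorphism $\what\iota_*$: a flat structure on $\sG^\circ$ plus the surjectivity and essential injectivity of $\what\iota_*$ forces the monodromy to descend. Finally, since $\sG$ and $(\gamma^*\sE)^{**}$ are reflexive sheaves agreeing on $\wtilde X_{\reg}$, whose complement has codimension $\geq 2$, they are isomorphic on all of $\wtilde X$; hence $(\gamma^*\sE)^{**}$ is locally free and flat, as claimed.

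The main obstacle will be the first paragraph: carefully running the Simpson / Uhlenbeck–Yau correspondence on the open smooth locus $X_{\reg}$, where one lacks compactness, and ensuring that the vanishing of the three intersection numbers in \eqref{eq:svcc} — which a priori live on a resolution of the singular projective $X$ — really do translate into the precise Chern-class conditions needed to conclude that $\sE|_{X_{\reg}}$ is locally free in codimension two and carries a flat structure. This requires matching the singular intersection theory of Section~\ref{sec:ChernSing} with the analytic input, and is where the hypothesis ``smooth in codimension two'' does the essential work; once this bridge is in place, the covering-theoretic extension is purely an application of Theorem~\ref{thm:imm1} together with Deligne's theorem.
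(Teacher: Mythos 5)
There is a genuine gap, and it sits exactly where you flag it: your first paragraph asks for the Simpson/Uhlenbeck--Yau correspondence on the non-compact smooth locus $X_{\reg}$ (resp.\ $\wtilde X_{\reg}$), but no such result is available. Simpson's \cite[Cor.~3.10]{MR1179076} is a statement about semistable sheaves on \emph{smooth projective} varieties; on the quasi-projective $X_{\reg}$ one has neither the compactness needed for the existence of the harmonic/Hermite--Einstein metric nor a meaningful way to feed in the intersection numbers \eqref{eq:svcc}, which are computed on a resolution of the compact $X$. Making this work on $X_{\reg}$ would amount to redoing substantial analysis, not to quoting a theorem. The paper's proof avoids the issue by a different mechanism that your proposal lacks entirely: after passing to the cover $\gamma:\wtilde X \to X$ of Theorem~\ref{thm:flat}, one restricts $\sF := (\gamma^*\sE)^{**}$ to a general complete intersection \emph{surface} $S = D_1 \cap \cdots \cap D_{n-2} \subset \wtilde X_{\reg}$ cut out by members of $|m_i\gamma^*H|$ with $m_1 \ll \cdots \ll m_{n-2}$. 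On the smooth projective $S$, semistability of $\sF|_S$ (Mehta--Ramanathan in Flenner's version for normal spaces, \cite{Flenner84}) together with $c_1(\sF|_S)\cdot \wtilde H|_S = 0$ and $\ch_2(\sF|_S)=0$ makes Simpson's result genuinely applicable, producing a representation of $\pi_1(S^{an})$; the Goresky--MacPherson Lefschetz theorem \cite{GoreskyMacPherson} identifies $\pi_1(S^{an})$ with $\pi_1(\wtilde X_{\reg}^{an})$, and Theorem~\ref{thm:flat} then gives a flat algebraic bundle $\sF_\tau$ on all of $\wtilde X$. Finally --- and this is a second ingredient missing from your sketch --- one must still prove $\sF \cong \sF_\tau$, knowing only that the two agree on $S$ (not on a big open set); the paper does this by showing that flat bundles of fixed rank on $\wtilde X$ form a bounded family (Proposition~\ref{prop:flatbounded}, using rational singularities of klt spaces \cite[Thm.~5.22]{KM98} and \cite[Cor.~3.3.7]{MR2665168}) and invoking the iterated Bertini-type theorem for isomorphism classes, Corollary~\ref{cor:bertstabY}. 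Your codimension-two reflexivity argument would suffice only if you already had the flat structure on the whole of $\wtilde X_{\reg}$, which is precisely what cannot be obtained directly.

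A secondary inaccuracy: in your extension step you argue that ``the finite \'etale covers of $\wtilde X_{\reg}$ on which $\sG^\circ$ becomes trivializable extend.'' A flat bundle with infinite monodromy is not trivialized by any finite \'etale cover, so this mechanism does not work as stated; the correct extension argument is the one in the proof of Theorem~\ref{thm:flat}, which uses Malcev's residual finiteness of finitely generated linear groups and the isomorphism $\what{π}_1(\wtilde X_{\reg}) \cong \what{π}_1(\wtilde X)$ to factor the representation through $π_1(\wtilde X^{an})$. Since Theorem~\ref{thm:flat} is available to you as a black box, this part is repairable by simply citing it; the surface-restriction and boundedness machinery of the previous paragraph is not.
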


\subsection{Characterisation of torus quotients: varieties admitting polarised endomorphisms}

In \cite{MR2587100}, Nakayama and Zhang study the structure of varieties
admitting polarised endomorphisms ---the notion of ``polarised endomorphism'' is
recalled in Remark~\ref{rem:pE} below.  They conjecture in
\cite[Conj.~1.2]{MR2587100} that any variety of this kind is either uniruled or
covered by an Abelian variety, with a covering map that is étale in codimension
one.  The conjecture has been shown in special cases, for instance in dimensions
less than four.  As an immediate application of Theorem~\ref{thm:mainSpec}, we
show that it holds in full generality.

\begin{thm}[\protect{Varieties with polarised endomorphisms; cf.~\cite[Conj.~1.2]{MR2587100}}]\label{thm:NZ}
  Let $X$ be a normal, complex, projective variety admitting a non-isomorphic
  polarised endomorphism.  Assume that $X$ is not uniruled.  Then, there exists
  an Abelian variety $A$ and a finite, surjective morphism $A → X$ that is étale
  in codimension one.
\end{thm}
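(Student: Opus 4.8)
The plan is to combine the structural results of Nakayama--Zhang \cite{MR2587100} with our Theorem~\ref{thm:mainSpec}. First I would invoke the main structure theorem of \cite{MR2587100}: if $X$ is a normal projective variety admitting a non-trivial polarised endomorphism $f\colon X → X$ and $X$ is not uniruled, then, possibly after replacing $f$ by an iterate, $X$ has at worst log terminal singularities, $K_X$ is numerically trivial (indeed torsion, and in fact $X$ admits a $\bQ$-divisor making the pair klt), and $f$ is étale in codimension one. This is precisely the reduction step that Nakayama and Zhang carried out; the point where their argument stalled was in promoting the near-étale self-map to an honest étale covering by an Abelian variety.

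Next I would build the relevant infinite tower. Since $f$ is finite, surjective, and étale in codimension one, the iterates $f^{\circ i}\colon X → X$ give an inverse system; more precisely, setting $Y_i := X$ with the structure map $γ_i := f$ one obtains a sequence as in \eqref{eq:ghh}. After replacing $f$ by a further iterate one may arrange (using that $\Aut$ of the relevant Galois closures is controlled) that each composite $f^{\circ i}$ is Galois — this is a standard manoeuvre: pass to the Galois closure of $f$, which is again a polarised endomorphism situation, or appeal to the Galois-closure construction for towers of self-maps. With the Galois hypothesis in hand, Theorem~\ref{thm:mainSpec} applies and tells us that all but finitely many $γ_i = f$ are étale; but since every $γ_i$ is literally the \emph{same} morphism $f$, this forces $f$ itself to be étale.

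Once $f\colon X → X$ is a finite étale surjective morphism of a non-uniruled klt projective variety with $K_X$ numerically trivial, the conclusion follows from the now-classical structure theory. Étaleness implies $X$ is smooth (an étale self-cover of degree $>1$ forces the variety to have no singularities, since singular points would have to map to singular points with the same local fundamental group, contradicting properness/degree considerations — alternatively, $f$ étale and $K_X$ torsion force $X$ to be smooth by the characterisation of klt spaces with an étale endomorphism). Then $X$ is a smooth projective variety with numerically trivial canonical bundle admitting a non-invertible étale self-map; by the Beauville--Bogomolov decomposition together with the fact that only Abelian-variety factors admit such self-maps (Calabi--Yau and hyperkähler factors have finite $\Aut$ and no non-trivial étale self-covers), $X$ is, after a finite étale cover, an Abelian variety. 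Spelling this out: the albanese map $X → \Alb(X)$ is étale onto its image after the decomposition, and non-uniruledness plus the polarised endomorphism rules out the other factors, so the universal cover route yields an Abelian variety $A$ with a finite étale (hence étale in codimension one) map $A → X$.

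The main obstacle I anticipate is the Galois-closure step: ensuring that the tower of iterates of $f$ can be taken to consist of Galois morphisms without destroying the polarised-endomorphism structure or the property of being étale in codimension one. One must check that the Galois closure $\widehat X$ of $f^{\circ i}\colon X → X$ still carries a compatible polarised endomorphism (so that Nakayama--Zhang's reductions apply to it) and that the resulting maps $\widehat X → X$ remain étale in codimension one — this uses that composition and fibre products of covers étale in codimension one are again étale in codimension one, together with purity of the branch locus (Remark~\ref{rem:main}). A clean alternative is to observe that \cite{MR2587100} itself already reduces to a situation where $f$ is Galois, or to replace $X$ at the outset by its ``maximally quasi-étale'' cover produced by Theorem~\ref{thm:imm1}; in that case $f$ lifts to the cover and the towers become automatically Galois. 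Everything else — the appeal to Theorem~\ref{thm:mainSpec}, the deduction that $f$ is étale, and the Beauville--Bogomolov endgame — is then routine.
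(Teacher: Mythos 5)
There is a genuine gap at the heart of your argument: the step where you ``arrange, after replacing $f$ by an iterate, that each composite $f^{\circ i}$ is Galois'' and then apply Theorem~\ref{thm:mainSpec} to the tower $Y_i := X$, $γ_i := f$ to conclude that $f$ itself is étale. This is not a standard manoeuvre, it is not proved by passing to a Galois closure (the Galois closure of $f^{\circ i}$ is a \emph{different} variety $V_i$, not $X$, so it does not make the composites $f^{\circ i}: X → X$ Galois), and the intermediate conclusion you are aiming for is simply false: the singular Kummer surface of Construction~\ref{const:sKs} is a non-uniruled klt projective surface with $K_X \sim 0$ admitting the polarised endomorphism $\mathbf{n}_X$, which is étale in codimension one but \emph{never} étale, and none of the composites $\mathbf{n}_X ◦ \cdots ◦ \mathbf{n}_X$ is Galois (Proposition~\ref{prop:msharp}, Section~\ref{subsect:Kummer}); yet Theorem~\ref{thm:NZ} holds for it via the quotient map $A → X$, not via étaleness of $f$. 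Your fallback --- replace $X$ by the maximally quasi-étale cover of Theorem~\ref{thm:imm1} and lift $f$ --- is also unjustified: that cover is not canonical (Section~\ref{sec:noMinTildeX}), so there is no a priori lift of $f$ to it, and controlling such compatibilities is exactly the delicate bookkeeping Nakayama--Zhang perform.

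The paper's proof is correspondingly different and more modest. It does not try to make $f$ étale; instead it isolates the single claim in Nakayama--Zhang's proof of \cite[Thm.~3.3]{MR2587100} (the claim on p.~1004) that required their extra hypothesis on fundamental groups. For each $k$ one takes the Galois closure $V_k → X$ of $f^{\circ k}$ (Theorem~\ref{thm:galoisClosure}), obtaining towers $X \leftarrow V_1 \leftarrow V_2 \leftarrow \cdots$ with connecting maps $h_k$, resp.\ $g_k$, all quasi-étale, in which the \emph{composed} maps $θ_k$, resp.\ $τ_k$, down to $X$ are Galois by construction; Theorem~\ref{thm:mainSpec} then shows that $h_k$ and $g_k$ are étale for $k \gg 0$. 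With that claim established unconditionally, the remainder of Nakayama--Zhang's original argument is applied verbatim to produce the Abelian variety $A$ and the covering $A → X$ étale in codimension one. In particular no Beauville--Bogomolov decomposition, and no smoothness of $X$, enters the proof; those parts of your endgame would in any case need the (false) étaleness of $f$ as input.
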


\begin{rem}[Polarised endomorphism]\label{rem:pE}
  Let $X$ be a normal, complex, projective variety.  An endomorphism $f: X → X$
  is called \emph{polarised} if there exists an ample Cartier divisor $H$ and a
  positive number $q ∈ \bN^+$ such that $f^* (H) \sim q· H$.
\end{rem}

Theorem~\ref{thm:NZ} has strong implications for the structure of varieties with
endomorphisms.  These are discussed in Section~\vref{ssec:structure}.

\subsection{Outline of the paper}
\approvals{Greb & yes \\ Kebekus & yes \\ Peternell & yes}

Section~\ref{sec:mainTHM} formulates and discusses the main results.  Before
proving these in Part~\ref{part:II}, we have gathered in Part~\ref{part:I} a
number of results and facts which will later be used in the proofs.  While many
of the facts discussed in Sections~\ref{sec:facts} and \ref{sec:bertini} are
known to experts (though hard to find in the literature), the material of
Section~\ref{sec:ChernSing} is new to the best of our knowledge and might be of
independent interest.

The main result and its applications are proven in Part~\ref{part:II}.  The
statements are somewhat delicate and might invite misinterpretation unless care
is taken.  We have therefore chosen to conclude with a number of examples in
Section~\ref{sec:ex}, showing that the assumptions are strictly necessary and
that several ``obvious'' generalisations or reformulations are wrong.
\Preprint{The concluding appendices prove uniqueness and equivariance in
  ``Zariski's Main Theorem in the form of Grothendieck'' and invariance of
  branch loci under Galois closure.}

\Publication{The preprint version of this paper, available as
  \href{http://arxiv.org/abs/1307.5718}{arXiv:1307.5718}, contains additional
  figures, more details and spells out a few standard proofs which are left to
  the reader in the present version.  The numbering is identical.  }

\subsection{Acknowledgements}
\approvals{Greb & yes \\ Kebekus & yes \\ Peternell & ---}

The authors would like to thank Hélène Esnault, Carlo Gasbarri, Patrick Graf, Annette
Huber-Klawitter, James McKernan, Wolfgang Soergel, and Matthias Wendt for
numerous discussions.  Chenyang Xu kindly answered our questions by e-mail.
Angelo Vistoli and Jason Starr responded to Stefan Kebekus' questions on the
MathOverflow web site.  We are grateful to De-Qi Zhang for his interest in our
work and for pointing us towards \cite{AmbroLCtrivial}.  Moreover, we thank
Jochen Heinloth for discussions concerning boundedness of flat sheaves and for
providing references from \cite{MR1320603}.  The authors are particularly
grateful to Fritz Hörmann, who found a mistake in the first preprint version of
this paper.  Finally, the authors would like to thank an anonymous referee for
pointing out several simplifications in the proof of the main result.

%
%
\svnid{$Id: 02.tex 873 2015-07-28 13:14:15Z kebekus $}

\section{Main result}
\label{sec:mainTHM}
\subversionInfo

The introductory section presented a simplified version of our main result.  The
following more general Theorem~\ref{thm:main1} differs from the simplified
version in two important aspects, which we briefly discuss to prepare the reader
for the somewhat technical formulation.  \Publication{The preprint version of
  this paper contains a figure that illustrates the setup
  schematically.}\Preprint{Figure~\vref{fig:main} illustrates the setup
  schematically.
  
  \begin{figure}
    \centering
    \footnotesize
    
    \begin{tikzpicture}
      \draw (0,2) node[right]{\includegraphics[width=4cm]{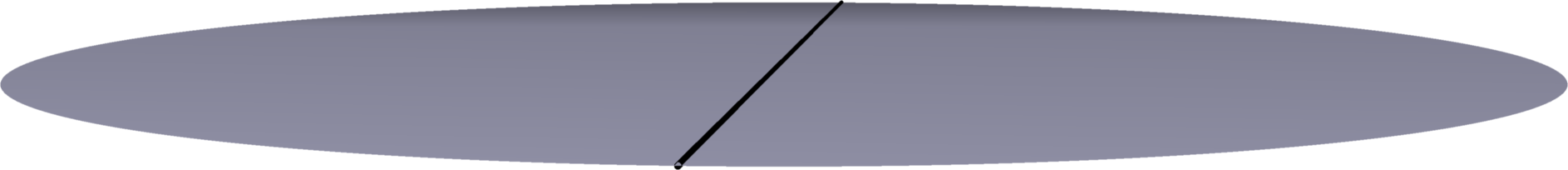}};
      \draw (0.4,2.5) node {$Y_0$};
      \draw [->] (2,1.3) -- node[left]{$η_0$} (2,0.5);
      
      \draw (5.5,2) node[right]{\includegraphics[width=3cm]{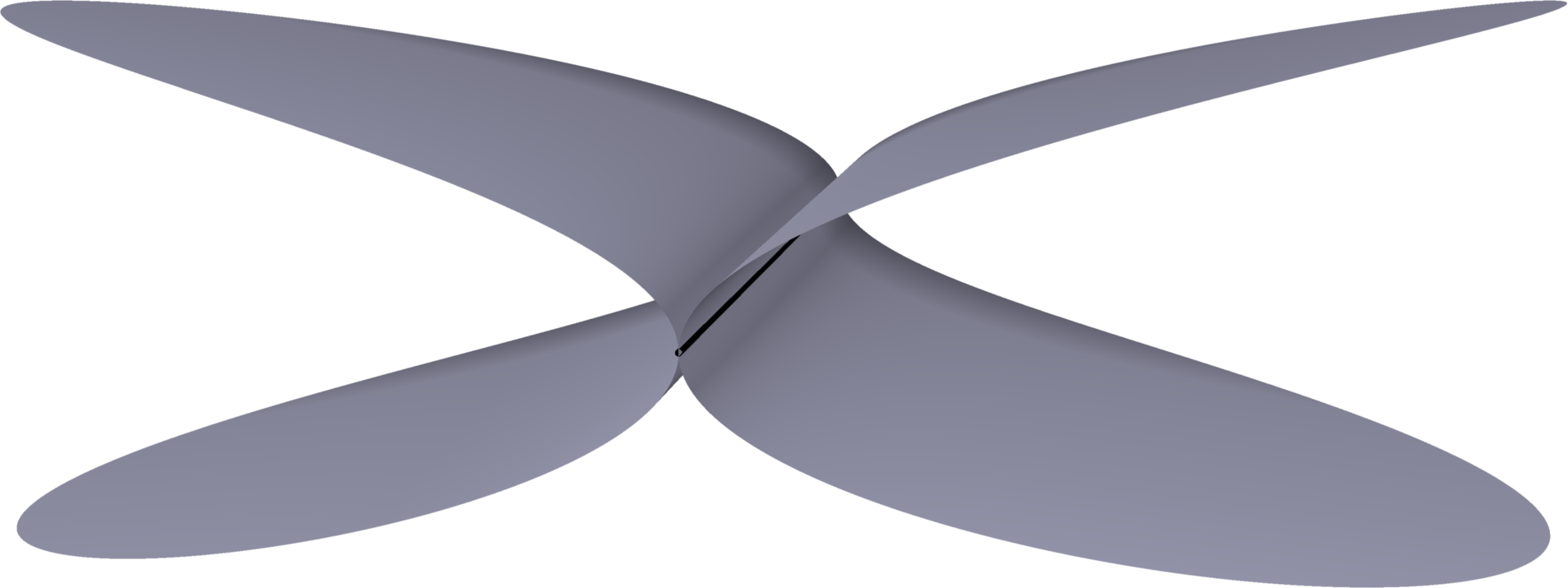}};
      \draw (6.0,2.8) node {$Y_1$};
      \draw [->] (7,1.3) -- node[left]{$η_1$} (7,0.5);
      \draw [->] (5.4,2) -- node[above]{$γ_1$} (4.4,2);
      
      \draw (10,2) node[right]{\includegraphics[width=2cm]{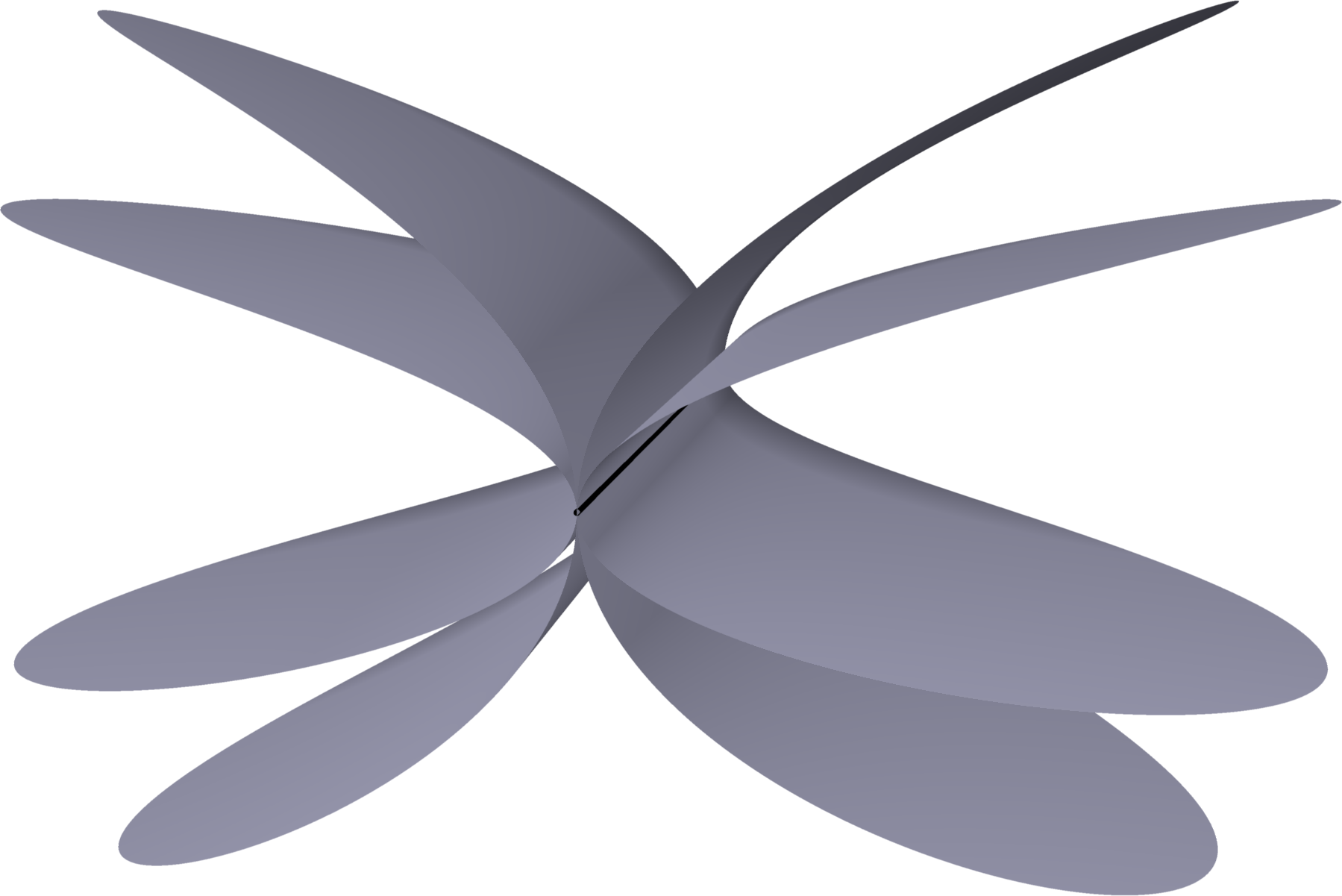}};
      \draw (10.5,2.8) node {$Y_2$};
      \draw [->] (11,1.3) -- node[left]{$η_2$} (11,0.5);
      \draw [->] (9.9,2) -- node[above]{$γ_2$} (8.8,2);
      
      \draw (0,0) node[right]{\includegraphics[width=4cm]{s2-1cover}};
      \draw (0.5,-0.4) node{$X_0$};
      \draw [->] (2.5, -.5) node[right]{$S$} -- (2.0, -.1);
      
      \draw (5.5,0) node[right]{\includegraphics[width=3cm]{s2-1cover}};
      \draw (6.0,-0.4) node{$X_1$};
      \draw [->] (5.4,0) -- node[below]{$ι_1$} (4.4,0);
      
      \draw (10,0) node[right]{\includegraphics[width=2cm]{s2-1cover}};
      \draw (10.5,-0.4) node{$X_2$};
      \draw [->] (9.9,0) -- node[below]{$ι_2$} (8.8,0);
    \end{tikzpicture}
    
    \bigskip
    
    {\small The figure shows the setup for the main result,
      Theorem~\ref{thm:main1}, schematically.  The morphisms $η_i$ are Galois
      covers over a sequence $X ⊇ X_0 ⊇ X_1 ⊇ \cdots$ is increasingly small open
      subsets of $X$.  The morphisms $γ_i$ between these covering spaces are
      étale away from the preimages of $S$.  In Theorem~\ref{thm:main1}, the set
      $S$ is of codimension two or more.  This aspect is difficult to illustrate
      and therefore not properly shown in the figure.}
    
    \caption{Setup for the main result}
    \label{fig:main}
  \end{figure}
}

\subsubsection*{Aspect 1: Finite vs.~quasi-finite morphisms}

For simplicity, we have assumed in Theorem~\ref{thm:mainSpec} that all morphisms
$γ_i$ are finite.  However, there are settings where this assumption is too
restrictive and where one would like to consider quasi-finite rather than finite
morphisms.  The proofs of Theorems~\ref{thm:imm1local} and \ref{thm:sidxc}
provide examples for this more general setup.

To support this kind of application, Theorem~\ref{thm:main1} allows the
morphisms $γ_i$ to be quasi-finite, as long as each $Y_i$ is Galois over a
suitable open subset $X_i ⊆ X$.  For this reason, Theorem~\ref{thm:main1}
introduces a descending chain of dense open subsets $X ⊇ X_0 ⊇ X_1 ⊇ \cdots$ and
replaces Sequence~\eqref{eq:ghh} by the more complicated
Diagram~\eqref{eq:fmapseq-1}.

\subsubsection*{Aspect 2: Specification of the branch locus}

The morphisms $γ_i$ of Theorem~\ref{thm:mainSpec} are required to be étale in
codimension one, and therefore branch only over the singular locus of the target
varieties $Y_{i-i}$.  However, there are settings where it is advantageous to
specify the potential branch locus in a more restrictive manner, requiring that
the $γ_i$ branch only over a given set $S$.  This gives more precise information
on the branch locus.

\begin{thm}[Main result]\label{thm:main1}
  Let $X$ be a normal, complex, quasi-projective variety of dimension $\dim X ≥
  2$.  Assume that there exists a $\bQ$-Weil divisor $Δ$ such that $(X, Δ)$ is
  klt.  Suppose further that we are given a descending chain of dense open
  subsets $X ⊇ X_0 ⊇ X_1 ⊇ \cdots$, a closed reduced subscheme $S ⊂ X$ of
  codimension $\codim_X S ≥ 2$, and a commutative diagram of morphisms between
  normal varieties,
  \begin{equation}\label{eq:fmapseq-1}
    \begin{gathered}
      \xymatrix{
        & Y_0 \ar@{->>}[d]_{η_0} & \ar[l]_{γ_1} Y_1 \ar@{->>}[d]_{η_1} & \ar[l]_{γ_2} Y_2 \ar@{->>}[d]_{η_2} & \ar[l]_{γ_3} Y_3 \ar@{->>}[d]_{η_3} & \ar[l]_{γ_4} \cdots \\
        X & \ar@{_(->}[l]^{ι_0} X_0 & \ar@{_(->}[l]^{ι_1} X_1 & \ar@{_(->}[l]^{ι_2} X_2 & \ar@{_(->}[l]^{ι_3} X_3& \ar@{_(->}[l]^{ι_4} \cdots,
      }
    \end{gathered}
  \end{equation}
  where the following holds for all indices $i ∈ \bN$.
  \begin{enumerate}
  \item\label{il:R1} The morphisms $ι_i$ are the inclusion maps.
  \item\label{il:R2} The morphisms $γ_i$ are quasi-finite, dominant and étale
    away from the reduced preimage set $S_i := η_i^{-1}(S)_{\red}$.
  \item\label{il:R3} The morphisms $η_i$ are finite, surjective, Galois, and
    étale away from $S_i$.
  \end{enumerate}
  Then, all but finitely many of the morphisms $γ_i$ are étale.
\end{thm}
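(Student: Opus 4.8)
The plan is to reduce the problem to a statement about local fundamental groups and to use a Noetherianity/finiteness argument. First I would reformulate the conclusion: a morphism $\gamma_i$ fails to be étale precisely when its branch locus (a subset of $S_{i-1}$) is non-empty, and by purity of the branch locus this happens iff $\gamma_i$ is ramified along some prime divisor in $Y_{i-1}$ lying over $S$—but since $\operatorname{codim}_X S \geq 2$ and the $\gamma_i$ are étale away from $S_i$, the ramification is ``hidden'' in codimension $\geq 2$. The key local invariant to track is, for each point $s \in S$ and each point $y_i \in Y_i$ over $s$, the image of the local fundamental group $\pi_1^{\mathrm{loc}}(Y_i, y_i)$—more precisely, one wants to bound the ``amount of ramification'' that can still be introduced. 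The mechanism should be: if infinitely many $\gamma_i$ were non-étale, one would produce an infinite strictly increasing chain of quasi-étale covers concentrated over $S$, and this must be obstructed by a finiteness statement.

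The main input I would exploit is Xu's theorem (referenced as \cite{Xu12}, and the special cases proved in Sections~\ref{sec:P1}--\ref{sec:P3}): for a klt singularity, the local étale fundamental group is finite. Granting that, here is the structure. Fix a closed point $s \in S$ and a small analytic or Zariski neighbourhood $U$ of $s$ in $X$; by the klt assumption and Xu's finiteness, $\widehat{\pi}_1(U_{\mathrm{reg}})$ is finite. Pulling the tower back over $U$ and taking connected components, each $\gamma_i$ restricted over $U$ is dominated by the universal quasi-étale cover of $U$ (the one realising the finite group $\widehat{\pi}_1(U_{\mathrm{reg}})$). Since the composed morphisms $\eta_i \colon Y_i \to X_i$ are Galois and the $\gamma_i$ are étale over the smooth locus, the degree of ramification contributed over each point of $S$ is controlled by this fixed finite group. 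Therefore after finitely many steps—bounded by (roughly) the maximum over the finitely many ``branch-type strata'' of $|\widehat{\pi}_1(U_{\mathrm{reg}})|$—no further ramification over $S$ is possible, i.e. all subsequent $\gamma_i$ are étale. Quasi-compactness of $S$ makes ``finitely many strata'' precise: one stratifies $S$ into finitely many locally closed pieces along which the local analytic type (hence the local fundamental group) is constant, or at least along which $|\widehat{\pi}_1|$ is bounded.

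The hard part will be making the ``amount of ramification is controlled by a fixed finite group and therefore stabilises'' argument rigorous in the global tower, as opposed to pointwise. The subtleties are: (i) the $\gamma_i$ are only quasi-finite, not finite, and the open sets $X_i$ shrink, so one must argue that the relevant ramification phenomena already occur over a fixed neighbourhood of each point of $S$—this is exactly why the theorem is phrased with the chain $X \supseteq X_0 \supseteq \cdots$ and why one needs a Bertini-type or Lefschetz-type reduction (Section~\ref{sec:bertini}) to cut down to a surface or curve transversal where the tower becomes genuinely finite; (ii) one must check that passing to Galois closures does not destroy the klt hypothesis or enlarge the branch locus beyond $S$—this is presumably the content of the appendix on invariance of branch loci under Galois closure; (iii) one must glue the local finiteness bounds over all of $S$ using quasi-compactness, and ensure the bound is uniform. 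I expect the proof in the paper to handle these by the inductive ladder of special cases (curves, then surfaces via Bertini, then the general dimension by hyperplane sections), with Xu's result as the base case, rather than by a direct global argument.
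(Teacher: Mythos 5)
Your overall reading of the situation is close to the paper's: the decisive external input is Xu's finiteness of the local \'etale fundamental group of a klt germ, the case of an isolated point of $S$ is handled by a stabilising chain of finite-index subgroups detected through the classification of branched covers, and the general case is reduced to special cases. Indeed your pointwise argument is essentially Proposition~\ref{prop:Sfinite}. The gap is in the step you use to globalise. You propose to bound the number of non-\'etale $\gamma_i$ by (roughly) the maximum of $|\widehat{\pi}_1(U_{\reg})|$ over finitely many strata of $S$ along which the local type is constant. Finiteness of the local group at a point $s$ only bounds how many times the chain of local subgroups can drop \emph{at that point}; over a positive-dimensional stratum the drops can occur at different indices over different points (for example, $\gamma_i$ could ramify only over a single point $t_i$ of a curve stratum, with all $t_i$ distinct), so a uniform bound on the order of the local groups does not bound the number of indices $i$ at which $\gamma_i$ ramifies somewhere over the stratum. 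What is needed, and what your plan does not supply, is that the covering data of the whole tower is \emph{constant along the stratum, compatibly with the maps} $\gamma_i$ --- not merely that the local analytic type or the order of the local group is constant. This is exactly the content of Sections~\ref{sec:proof-case-where} and~\ref{sec:P3}: after the projection-to-a-subvariety construction of \cite{GKKP11} (an \'etale base change making the stratum finite \'etale over a base $B$, with klt fibres via Bertini for klt pairs), Verdier's topological local triviality of the pointed map (Proposition~\ref{prop:VerdierGen}) and the product structure of covers over contractible sets (Proposition~\ref{prop:cover}) allow one to apply the isolated-point case to a single \emph{very general} fibre, chosen in a countable intersection of dense opens, and then to transport \'etaleness from that fibre to a whole neighbourhood via Corollary~\ref{cor:SteinTop}; the locus where this fibration argument does not apply is of smaller dimension and is dealt with by induction on $\dim S$ (Section~\ref{sec:mainPF}), not by a uniform stratified bound.

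Two smaller corrections. Finiteness of $\widehat{\pi}_1(U_{\reg})$ holds for small \emph{analytic} cone neighbourhoods (this is Xu's theorem on links), but not for Zariski neighbourhoods --- the Gurjar--Zhang surface of Section~\ref{ssec:GZ}, or the singular Kummer surface of Section~\ref{subsect:Kummer}, has infinite $\widehat{\pi}_1(X_{\reg})$ --- so your gluing must be carried out with Euclidean neighbourhoods, which is precisely why the topological triviality statement is indispensable. Also, the Bertini-type results of Section~\ref{sec:bertini} and the Galois-closure statement are not used in the proof of Theorem~\ref{thm:main1} (they serve the applications); the induction in the paper runs over $\dim S$, not over $\dim X$ via hyperplane sections as you anticipated.
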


\begin{rem}[Interdependence of conditions in Theorem~\ref{thm:main1}]\label{rem:dep}
  Conditions~\ref{il:R2} and \ref{il:R3} are not independent.  The assumption
  that the morphisms $γ_i$ are étale away from $S_i$ follows trivially from
  Condition~\ref{il:R3} and \cite[Cor.~3.6]{Milne80}.  We have chosen to include
  the redundancy for convenience of reference and notation.
\end{rem}

\part{Preparations}
\label{part:I}

%
%
\svnid{$Id: 03.tex 873 2015-07-28 13:14:15Z kebekus $}

\section{Notation, conventions, and facts used in the proof}
\label{sec:facts}
\subversionInfo

\subsection{Global conventions}
\label{ssec:conventions}

Throughout this paper, all schemes, varieties and morphisms will be defined over
the complex number field.  We follow the notation and conventions of
Hartshorne's book \cite{Ha77}.  In particular, varieties are always assumed to
be irreducible.  For complex spaces and holomorphic maps, we follow \cite{CAS}
whenever possible.  For all notations around Mori theory, such as klt spaces and
klt pairs, we refer the reader to \cite{KM98}.  The empty set has dimension
minus infinity, $\dim \emptyset = - \infty$.

\subsection{Notation}
\approvals{Greb & yes \\ Kebekus & yes \\ Peternell & yes}

In the course of the proofs, we frequently need to switch between the Zariski
and the Euclidean topology.  We will consistently use the following notation.

\begin{notation}[Complex space associated with a variety]
  Given a variety or projective scheme $X$, denote by $X^{an}$ the associated
  complex space, equipped with the Euclidean topology.  If $f : X → Y$ is any
  morphism of varieties or schemes, denote the induced map of complex spaces by
  $f^{an} : X^{an} → Y^{an}$.  If $\sF$ is any coherent sheaf of
  $\sO_X$-modules, denote the associated coherent analytic sheaf of
  $\sO_{X^{an}}$-modules by $\sF^{an}$.
\end{notation}

\begin{defn}[Covers and covering maps, compare with Definition~\ref{def:topcov}]\label{def:cover}
  Let $Y$ be a normal, connected variety or complex space.  A \emph{cover} of
  $Y$ is a surjective, finite morphism (resp.\ holomorphic map) $f : X → Y$,
  where $X$ is again normal and connected.  The map $f$ is called a \emph{covering
    map}.  Two covers $f: X → Y$ and $f': X' → Y$ are called \emph{isomorphic}
  if there exists an isomorphism (resp.\ biholomorphic map) $ψ: X → X'$ such
  that $f' ◦ ψ = f$.  \Preprint{In other words, the following diagram should
    commute:
  $$
  \xymatrix{ %
    X \ar[d]_f \ar[rr]^{ψ}_\cong && \ar[d]^{f'} X' \\
    Y \ar@{=}[rr] && Y.
  }
  $$}
\end{defn}

Note that in Definition~\ref{def:cover} we do not assume $f$ to be étale.
Morphisms that are étale in codimension one will also be called
``quasi-étale''.

\begin{defn}[Quasi-étale morphisms]\label{defn:quasietale}
  A morphism $f : X → Y$ between normal varieties is called \emph{quasi-étale}
  if $f$ is quasi-finite and étale in codimension one.  In other words, $f$ is
  quasi-étale if $\dim X = \dim Y$ and if there exists a closed, subset $Z ⊆ X$
  of codimension $\codim_X Z ≥ 2$ such that
  $f|_{X \setminus Z} : X \setminus Z → Y$ is étale.
\end{defn}

\begin{defn}[\protect{Finite topological covering space; cf.~\cite[Sect.~1.3]{Hatcher}}]\label{def:topcov}
  Given any topological space $Y$, a \emph{finite topological covering space} is
  a topological space $X$ together with a surjective, continuous map $γ : X → Y$
  such that there exists an open cover $\{U_α\}_{α ∈ A}$ of $Y$ with the
  property that for each $α ∈ A$, the preimage $γ^{-1}(U_α)$ is a finite
  disjoint union of open sets in $X$, each of which is mapped homeomorphically
  onto $U_α$ by $γ$.
\end{defn}

\begin{rem}[Comparison with Definition~\ref{def:cover}]
  In contrast to Definition~\ref{def:cover}, we do not assume that covering
  spaces are connected.  A morphism of varieties that is a cover in the sense of
  Definition~\ref{def:cover} is a finite topological covering space if and only
  if it is étale.
\end{rem}

\subsection{Galois morphisms}
\approvals{Greb & yes \\ Kebekus & yes \\ Peternell & yes}

Galois morphisms appear prominently in the literature, but their precise
definition is not consistent.  We will use the following definition.

\begin{defn}[Galois morphism]\label{def:Galois}
  A covering map $γ : X → Y$ of varieties is called \emph{Galois} if there
  exists a finite group $G ⊂ \Aut(X)$ such that $γ$ is isomorphic to the
  quotient map.
\end{defn}

It is a standard, widely-used fact that any finite, surjective morphism between
normal varieties can be enlarged to become Galois.

\begin{thm}[Existence of Galois closure]\label{thm:galoisClosure}
  Let $γ: X → Y$ be a covering map of quasi-projective varieties.  Then, there
  exists a normal, quasi-projective variety $\wtilde X$ and a finite, surjective
  morphism $\wtilde{γ} : \wtilde X → X$ such that the following holds.
  \begin{enumerate}
  \item\label{il:GA1} There exist finite groups $H \leqslant G$ such that the
    morphisms $Γ := γ◦\wtilde{γ}$ and $\wtilde{γ}$ are Galois with group $G$ and
    $H$, respectively.
  \item\label{il:GA2} If $\Br(γ)$ and $\Br(Γ)$ denote the branch loci, with
    their natural structure as reduced subschemes of $Y$, then $\Br(γ) =
    \Br(Γ)$.  \Publication{\qed}
  \end{enumerate}
\end{thm}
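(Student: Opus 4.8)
The plan is to construct $\wtilde X$ as the normalisation of $X$ in a suitable finite Galois extension of the function field $\bC(X)$, and then to verify the two required properties by a standard Galois-theory argument combined with purity of the branch locus.

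\textbf{Construction.} Write $K := \bC(Y)$ and $L := \bC(X)$. Since $γ$ is a finite morphism of normal varieties, $L/K$ is a finite field extension. Let $M$ be a Galois closure of $L/K$ inside a fixed algebraic closure $\overline{K}$, set $G := \Gal(M/K)$ and $H := \Gal(M/L) \leqslant G$. Let $\wtilde X$ be the normalisation of $X$ in the extension $M/L$; equivalently, the normalisation of $Y$ in $M/K$. Since $Y$ is normal and quasi-projective, and $M/K$ is a finite extension, $\wtilde X$ is a normal quasi-projective variety, and the induced morphisms $\wtilde γ : \wtilde X → X$ and $Γ := γ ◦ \wtilde γ : \wtilde X → Y$ are finite and surjective, hence covering maps in the sense of Definition~\ref{def:cover}.

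\textbf{Property \ref{il:GA1}.} The group $G$ acts on $M$ by $K$-algebra automorphisms, and this action extends to the integral closure of $\sO_Y$ in $M$; hence $G$ acts on $\wtilde X$ by automorphisms over $Y$, and $H$ acts over $X$. By construction $M^G = K$ and $M^H = L$, so the rings of invariants give $\wtilde X / G = Y$ and $\wtilde X / H = X$; thus $Γ$ and $\wtilde γ$ are isomorphic to the respective quotient maps and are therefore Galois with groups $G$ and $H$ in the sense of Definition~\vref{def:Galois}. (One uses here that a finite, surjective morphism of normal varieties whose fibres are the orbits of a finite group action $G ⊂ \Aut$ is precisely the quotient map $\wtilde X → \wtilde X/G$.)

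\textbf{Property \ref{il:GA2}.} The inclusion $\Br(γ) ⊆ \Br(Γ)$ is clear: if $Γ$ is étale over an open set $U ⊆ Y$, then so is $γ$, because $γ$ is a ``factor'' of $Γ$ and étaleness descends along the finite surjection $\wtilde γ$. For the reverse inclusion, suppose $γ$ is étale over an open set $U ⊆ Y$. Then $Γ$ is étale over $U$ away from a closed subset of $U$ of codimension $≥ 2$ in $\wtilde X$: indeed, over $U$ the morphism $γ$ is a finite étale cover, and after base change its Galois closure is again étale; the only place extra branching of $\wtilde γ$ (and hence of $Γ$) could occur is along the ramification of $γ$ itself, which lies outside $U$ by assumption, together with the locus where $\wtilde X$ fails to be étale over its normalisation data — but in the étale locus of $γ$ the normalisation $\wtilde X$ coincides with the (automatically normal) fibre product Galois closure, which is finite étale over $U$. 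Hence $Γ$ is étale over $U$ outside a set of codimension $≥ 2$, and now purity of the branch locus (Zariski--Nagata, \cite[Cor.~3.6]{Milne80} in the form used in Remark~\ref{rem:main}) forces $Γ$ to be étale over all of $U$. Therefore $\Br(Γ) ⊆ \Br(γ)$, and the two reduced subschemes of $Y$ coincide.

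\textbf{Main obstacle.} The delicate point is Property~\ref{il:GA2}: one must argue that forming the Galois closure by normalisation does not enlarge the branch locus. The subtlety is that over the étale locus $U$ of $γ$, the normalisation $\wtilde X$ must be identified with the honest Galois closure of the finite étale cover $γ^{-1}(U) → U$, which is itself finite étale over $U$ with group $G$; this identification uses that a connected finite étale cover of a normal scheme is normal, so its total space already agrees with the relevant normalisation over $U$. Once this local identification over $U$ is in place, purity of the branch locus closes the argument. An alternative, which some authors prefer and which the preprint version apparently spells out in an appendix, is to construct $\wtilde X$ directly as the normalisation of a connected component of the $d$-fold fibre product $X ×_Y \cdots ×_Y X$ on which the symmetric group acts, and to track the branch locus through that construction; the essential input is the same. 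For this reason the full details are deferred, and here we record only that both properties follow from the standard theory of normalisation in field extensions together with purity of the branch locus.
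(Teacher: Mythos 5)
Your proposal is correct in substance, but it takes a genuinely different route from the paper. The paper works in the opposite order: it first restricts to the étale locus $Y^\circ := Y \setminus \Br(\gamma)$, constructs the Galois closure of the finite étale cover $\gamma^{-1}(Y^\circ) \to Y^\circ$ via the classification of étale covers by finite $\what{\pi}_1(Y^\circ)$-sets (with $G$ the monodromy image and $H$ the isotropy group of a point of the fibre), and only then extends this étale cover of $Y^\circ$ to a finite cover of $X$, resp.\ of $Y$, using the equivariant form of Zariski's Main Theorem, Theorem~\ref{thm:ZMT-eq}; property~\ref{il:GA1} then follows from the quotient and uniqueness statements of that theorem, and property~\ref{il:GA2} is immediate because the extension is by construction étale over $Y^\circ$, the remaining inclusion $\Br(\gamma)\subseteq\Br(\Gamma)$ being the same easy fact you use (the paper cites \cite[I.~Thm.~10.11]{SGA1}). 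You instead take the global field-theoretic Galois closure $M$ of $\bC(X)/\bC(Y)$ and normalise, proving~\ref{il:GA1} by invariant rings; that part is fine. The step you should tighten is the reverse inclusion in~\ref{il:GA2}: the detour through a codimension-two bad set and purity is neither needed nor really justified as written, and the whole weight of the argument rests on exactly the identification you defer, namely that the normalisation of $U := Y \setminus \Br(\gamma)$ in $M$ is finite étale over $U$. This is the standard comparison between the monodromy group of $\gamma^{-1}(U)\to U$ and $\Gal(M/K)$; alternatively, realise that normalisation as the connected component with function field $M$ of the $d$-fold fibre product $\gamma^{-1}(U)\times_U\cdots\times_U\gamma^{-1}(U)$, which is étale over the normal base $U$, hence normal, so that its components are already normal and finite étale over $U$. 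Once this is said, $\Gamma$ is étale over $U$ and $\Br(\Gamma)\subseteq\Br(\gamma)$ follows with no appeal to purity. In effect, the paper's order of construction buys~\ref{il:GA2} for free at the price of invoking the equivariant Zariski Main Theorem, while your order gives~\ref{il:GA1} cheaply but concentrates the work in this étale-locus identification.
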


The morphism $Γ : \wtilde X → Y$ is often called the \emph{Galois closure of
  $γ$}.  \Preprint{A proof of Theorem~\ref{thm:galoisClosure} is given in
  Appendix~\ref{appendix:B}.}\Publication{A proof of
  Theorem~\ref{thm:galoisClosure} can be found in Appendix~B of the
  \href{http://arxiv.org/abs/1307.5718}{arXiv version of this paper}.}

\subsection{Zariski's Main Theorem}
\approvals{Greb & yes \\ Kebekus & yes \\ Peternell & yes }

Grothendieck observed in \cite{EGA4-3} that Zariski's Main Theorem can be
reformulated by saying that any quasi-finite morphism decomposes as a
composition of an open immersion and a finite map.  If all varieties in question
are normal, this decomposition is unique and well-behaved with respect to the
action of the relative automorphism group.

\begin{thm}[Zariski's Main Theorem in the equivariant setting]\label{thm:ZMT-eq}
  Let $a : V° → W$ be any quasi-finite morphism between quasi-projective
  varieties.  Assume that $V°$ is normal and not the empty set.  Then there
  exists a normal, quasi-projective variety $V$ and a factorisation of $a$ into
  an open immersion $α : V° → V$ and a finite morphism $β : V → W$.  The
  factorisation is unique up to unique isomorphism and satisfies the following
  additional conditions with respect to group actions.
  \begin{enumerate}
  \item\label{il:zmt2} If $G$ is any group that acts on $V°$ and $W$ by
    algebraic morphisms, and if $a$ is equivariant with respect to these
    actions, then $G$ also acts on $V$ and the morphisms $α$ and $β$ are
    equivariant with respect to these actions.
  \item\label{il:zmt3} If $W° := \Image(a)$ is open in $W$ and $a : V° → W°$ is
    Galois with group $G$, then $β: V → W$ is Galois with group
    $G$.  \Publication{\qed}
  \end{enumerate}
\end{thm}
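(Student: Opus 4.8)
The plan is to dispose first of the existence and the uniqueness of the factorisation, and then to obtain both equivariance statements \ref{il:zmt2} and \ref{il:zmt3} by a purely formal argument that uses only the uniqueness. For existence, I would invoke Grothendieck's form of Zariski's Main Theorem: since $a$ is a quasi-finite, separated morphism of finite type between varieties over $\bC$, \cite{EGA4-3} yields a factorisation $V° \xrightarrow{j} T \xrightarrow{p} W$ with $j$ an open immersion and $p$ finite. Replacing $T$ by the reduced subscheme structure on $\overline{j(V°)}$, one may assume $T$ is an irreducible, reduced variety in which $j(V°)$ is a dense open subset. Normalising, the morphism $V := T^{\nu} \to T$ is finite (as $T$ is a variety), and since $V°$ is normal and maps isomorphically onto the dense open $j(V°)$, it lifts to a dense open subset $\alpha(V°) \subseteq V$; together with $\beta := p \circ (T^{\nu} \to T)$ this gives a factorisation $a = \beta \circ \alpha$ with $V$ normal and quasi-projective (finite over the quasi-projective $W$). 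I would also record the harmless observation that in \emph{any} such factorisation $V°$ is automatically dense in $V$, because $V$ is irreducible and $\alpha$ is an open immersion.

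For uniqueness, set $Z := \overline{a(V°)} \subseteq W$ with its reduced structure. Given any factorisation $(\alpha, \beta, V)$ of the above form, the finite morphism $\beta$ is closed, and using that $\alpha(V°)$ is dense in $V$ one gets $\beta(V) = \overline{a(V°)} = Z$; thus $\beta$ is a finite surjection onto $Z$. As $\alpha$ identifies $K(V)$ with $K(V°)$ and $V$ is normal, $\sO_V$ is forced to be the integral closure of $\sO_Z$ inside $K(V°)$, so that $V$ is canonically the normalisation of $Z$ in $K(V°)$. Consequently any two factorisations $(\alpha,\beta,V)$ and $(\alpha',\beta',V')$ are related by an isomorphism $\sigma : V \xrightarrow{\ \sim\ } V'$ over $W$ with $\sigma\circ\alpha = \alpha'$ and $\beta'\circ\sigma = \beta$; and $\sigma$ is unique because it is determined on the dense open subset $\alpha(V°)$ and $V'$ is separated, $V$ reduced.

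For \ref{il:zmt2}, write $\phi_g \in \Aut(V°)$ and $\psi_g \in \Aut(W)$ for the automorphisms through which $g \in G$ acts, so that $a\circ\phi_g = \psi_g\circ a$. For a fixed $g$, the triples $(\alpha,\ \psi_g\circ\beta,\ V)$ and $(\alpha\circ\phi_g,\ \beta,\ V)$ are both factorisations (open immersion followed by finite morphism) of the single morphism $\psi_g\circ a : V° \to W$, so the uniqueness statement provides a unique isomorphism $\chi_g : V \to V$ with $\chi_g\circ\alpha = \alpha\circ\phi_g$ and $\beta\circ\chi_g = \psi_g\circ\beta$. A direct check shows that $\chi_g\circ\chi_h$ satisfies the two relations characterising $\chi_{gh}$, whence $g\mapsto\chi_g$ is a homomorphism $G \to \Aut(V)$, i.e.\ an algebraic $G$-action on $V$, and the same two relations say precisely that $\alpha$ and $\beta$ are $G$-equivariant. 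For \ref{il:zmt3}, when $W° = \Image(a)$ is open in $W$ and $a : V° \to W°$ is the quotient map for a $G$-action on $V°$, I would apply \ref{il:zmt2} with the trivial $G$-action on $W$: then $G$ acts on $V$ with $\beta$ $G$-invariant, so $\beta$ factors as $V \xrightarrow{q} V/G \xrightarrow{\bar\beta} W$ with $V/G$ normal. Since $V°$ is a $G$-stable dense open subset of $V$, its image $V°/G \cong W°$ is dense and open in $V/G$ and $\bar\beta$ restricts over it to the inclusion $W° \into W$; hence $\bar\beta$ is finite and birational with normal source, so it coincides with the normalisation of $W$, which (as $W$ is normal here) is an isomorphism. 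Thus $\beta$ is isomorphic to the quotient map $V \to V/G$, i.e.\ Galois with group $G$.

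I expect the real work to lie in \ref{il:zmt2}: producing an automorphism $\chi_g$ for each $g$ is immediate once uniqueness is in hand, but checking that $g\mapsto\chi_g$ is a genuine group homomorphism — and, upstream of this, formulating the "unique isomorphism" clause of the uniqueness statement with enough precision (separatedness of the targets together with density of $V°$ and reducedness of the sources) for that argument to run — is the point that requires care. The rest, in particular the passage to the quotient in \ref{il:zmt3}, is routine once the formalism is set up.
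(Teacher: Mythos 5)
Your argument is correct, and it reaches the conclusion by a genuinely different route than the paper at the two places where real work is needed. The paper's Appendix~A derives both the uniqueness and the $G$-action on $V$ from Lemma~\ref{lem:A2} (a rational map whose composition with a finite morphism is a morphism is itself a morphism, proved via a blow-up and Zariski connectedness): uniqueness by applying the lemma to $r=\alpha_2\circ\alpha_1^{-1}$ and its inverse, and \ref{il:zmt2} by extending each $g\in G$ to a birational self-map $\overline{g}$ of $V$ and using $\beta\circ\overline{g}=g\circ\beta$ to see that $\overline{g}$ is a morphism. You instead characterise $(V,\beta)$ canonically as the normalisation of $\overline{a(V^\circ)}$ in $K(V^\circ)$, which gives uniqueness up to a \emph{unique} isomorphism compatible with both $\alpha$ and $\beta$, and you then obtain \ref{il:zmt2} purely formally by comparing the two factorisations $(\alpha,\,\psi_g\circ\beta)$ and $(\alpha\circ\phi_g,\,\beta)$ of $\psi_g\circ a$; the cocycle check $\chi_g\circ\chi_h=\chi_{gh}$ is exactly what your sharpened uniqueness clause is designed for, and it goes through. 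Your treatment of \ref{il:zmt3} coincides with the paper's Step~4: both pass to $V/G$ and use that a finite birational morphism onto the (implicitly normal) $W$ is an isomorphism --- the paper cites Zariski's Main Theorem from \cite{Ha77}, you phrase it via the normalisation of $W$, and making the normality of $W$ explicit, as you do, is a point the paper leaves tacit (it is forced anyway if the conclusion that $\beta$ is Galois is to make sense with Definition~\ref{def:Galois}). What each approach buys: the paper's lemma is a short, geometric, reusable tool, while your route avoids blow-ups and connectedness arguments and makes the ``unique up to unique isomorphism'' clause --- and hence the group-action axioms --- transparent; both rest on the same existence input, Grothendieck's form of Zariski's Main Theorem \cite{EGA4-3}, followed by normalisation.
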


\begin{rem}[Algebraic group actions]
  If the group $G$ in \ref{il:zmt2} is algebraic and acts algebraically on $V°$,
  then the induced action on $V$ will clearly also be algebraic.
\end{rem}

In the affine setting, parts of Theorem~\ref{thm:ZMT-eq} are found in
\cite{MR2210794}.  \Preprint{A full proof of Theorem~\ref{thm:ZMT-eq} is given
  in Appendix~\ref{appendix:A}.}\Publication{The
  \href{http://arxiv.org/abs/1307.5718}{arXiv version of this paper} contains a
  full proof of Theorem~\ref{thm:ZMT-eq} in Appendix~A.}

\subsection{Covers of stratified spaces}
\approvals{Greb & yes \\ Kebekus & yes \\ Peternell & yes}

In this section, we investigate the interplay of branch loci with Whitney
stratifications of the base space.  We follow the terminology of
Goresky-MacPherson's book \cite{GoreskyMacPherson}.  The following proposition
summarises material from \cite[Thm.  in I.1.7 and I.1.4]{GoreskyMacPherson}.

\begin{prop}[Whitney stratification of algebraic varieties and subvarieties]\label{prop:whitneystratexist}
  Let $X$ be an algebraic variety and $A \subsetneq X$ a Zariski-closed subset.
  \begin{enumerate}
  \item\label{il:wsx1} There exists a finite Whitney stratification $(X_i)_{i ∈
      Λ}$ of $X$ into disjoint, locally closed, smooth subvarieties $X_i$ of $X$
    such that $A$ is a union of strata.
  \item\label{il:wsx2} Given any Whitney stratification $(X_i)_{i ∈ Λ}$ as in
    \ref{il:wsx1} and any $i ∈ Λ$, then $X_i$ has a neighbourhood $U_i$ in
    $X^{an}$ that admits a continuous, locally trivial projection $f_i: U_i →
    X_i^{an}$.  Given any $p ∈ X_i$, the fibre $f_i^{-1}(p)$ is homeomorphic to
    the cone over the link of $X_i$ and therefore 1-connected.  \qed
  \end{enumerate}
\end{prop}

\begin{rem}
  Given any Whitney stratification $(X_i)_{i ∈ Λ}$ as in \ref{il:wsx1} and any
  Zariski-open $V ⊆ X$, observe that $(X_i∩V)_{i ∈ Λ}$ is again a Whitney
  stratification of $V$.  In the setting of \ref{il:wsx2}, observe that the
  identity $X_i^{an} → X_i^{an} ⊆ U_i$ yields a section of $f_i$.  The
  restriction $f_i|_{U_i \setminus X_i^{an}}$ is likewise locally trivial.
\end{rem}

\begin{cor}\label{cor:WstratBrach}
  In the setting of Proposition~\ref{prop:whitneystratexist}, assume that
  $\codim_X A ≥ 2$ and let $γ: Y → X$ be a covering map whose branch locus is
  contained in $A$.  If $i ∈ Λ$ is any index such that the Zariski-closure
  $\overline{X_i}$ equals a component of $A$, then either the branch locus
  contains $X_i$ or it is disjoint from $X_i$.
\end{cor}
\begin{proof}
  Given $a ∈ X_i \setminus \Br(γ)$, we need to show that $X_i ∩ \Br(γ) =
  \emptyset$.  Removing strata, we are free to assume that $X_i = A$.  Write
  $$
  U°_i := U_i \setminus X_i, \quad Y° := (γ^{an})^{-1}(U_i°), \quad F := f_i^{-1}(a), \quad F°
  := F ∩ U_i°.
  $$
  Observe that $U°_i$ and $F$ do not intersect the the branch locus.  Consider the long
  exact homotopy sequences,
  \begin{equation}\label{eq:homotopysequences}
    \begin{gathered}
      \begin{xymatrix}
        {
          π_2(X_i) \ar@{=}[d]\ar[r]& π_1(F°) \ar[d]\ar[r] & π_1(U°_i) \ar[d]\ar[r] & π_1(X_i) \ar@{=}[d]\ar[r]& \{1\}\\
          π_2(X_i) \ar[r]& π_1(F)\ar[r] & π_1(U_i) \ar[r]& π_1(X_i)\ar[r] & \{1\},
        }
      \end{xymatrix}
    \end{gathered}
  \end{equation}
  where the vertical arrows are induced by the inclusions.

  Choose $q ∈ F°$.  The finite topological covering space $Y° →
  U°_i$ corresponds to an action of $π_1(U°_i)$ on the finite set $Q :=
  γ^{-1}(q)$, \cite[p.~68ff]{Hatcher}.  Since $γ$ is unbranched over the
  1-connected set $F$, observe that the induced action of $π_1(F°)$ is trivial.
  The action of $π_1(U°_i)$ is therefore induced by an action of $π_1(X_i)$,
  hence by an action of $π_1(U_i)$.  It follows that $γ^{an}|_{Y°}$ can be
  extended to a finite topological covering space $γ': Y' → X$, which carries
  the structure of a normal complex space, locally biholomorphic to $X$.  But
  the two covering spaces $Y^{an}$ and $Y'$ agree outside of $A$, which is of
  codimension two, and are therefore isomorphic by
  \cite[Thm.~3.4]{DethloffGrauert}.  In particular, $γ$ is locally biholomorphic.
\end{proof}

\Preprint{
\subsection{Classification of holomorphic covering maps}
\approvals{Greb & yes \\ Kebekus & yes \\ Peternell & yes}

It is a standard result of topology that covering spaces of a given topological
space are classified by conjugacy classes of the fundamental group.  Using
fundamental results of Grauert, Remmert, and Stein this statement can be
generalised to branched holomorphic coverings of a given normal complex space.

\begin{prop}[Classification of covering maps with prescribed branch locus]\label{prop:SteinTop}
  Let $X$ be a connected, normal complex space and $A \subsetneq X$ a proper
  analytic subset.  Then the following natural map is bijective:
  $$
  \begin{array}{ccc}
    \begin{Bmatrix}
      \text{Isomorphism classes of }\\
      \text{holomorphic covering maps} \\
      \text{$f : Y → X$, where $f$ is local-}\\
      \text{ly biholom.\ away from $f^{-1}(A)$}
    \end{Bmatrix}
    & → &
    \begin{Bmatrix}
      \text{Finite-index subgroups of}\\
      \text{$\what{π}_1(X \setminus A)$ up to conjugation}\\
    \end{Bmatrix} \\[0.6cm]
    f & \mapsto & \left(f|_{Y \setminus f^{-1}(A)}\right)_* \what{π}_1 \bigl(Y \setminus f^{-1}(A) \bigl).
  \end{array}
  $$
\end{prop}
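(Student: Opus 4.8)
The plan is to prove Proposition~\ref{prop:SteinTop} by combining the classical topological classification of finite covering spaces with the theory of normalizations of branched covers in the analytic category. First I would handle the case where the branch locus is empty, i.e.\ recall that for a connected, locally path-connected, semilocally simply connected space $Z$, finite-index subgroups of $\pi_1(Z)$ (equivalently of $\what{\pi}_1(Z)$, since a finite-index subgroup of the profinite completion corresponds to a finite-index subgroup of $\pi_1$) correspond bijectively up to conjugation to isomorphism classes of connected finite topological covering spaces of $Z$. Applying this to $Z := X \setminus A$, which inherits these properties from the normal complex space $X$, gives a bijection between finite-index subgroups of $\what{\pi}_1(X \setminus A)$ up to conjugation and connected finite topological covering spaces $g : W \to X \setminus A$.

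The next step is to upgrade each such topological covering $g : W \to X \setminus A$ to a holomorphic covering map $f : Y \to X$ in the sense of the proposition. One endows $W$ with the unique complex structure making $g$ a local biholomorphism; then one must extend $Y^\circ := W$ across $f^{-1}(A)$. Here I would invoke the classical results of Grauert--Remmert on the existence and uniqueness of the normalization of $X$ in the field (or ring) of meromorphic functions pulled back along $W \to X \setminus A$ --- concretely, take the normalization $Y$ of $X$ in the function-field extension determined by $W$, so that $Y$ is a normal complex space with a finite holomorphic map $f : Y \to X$ restricting to $g$ over $X \setminus A$ and locally biholomorphic there. This construction is functorial, so isomorphic topological covers produce isomorphic holomorphic covers, giving a well-defined map from the right-hand side to the left-hand side inverse to the one in the statement.

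For injectivity and surjectivity of the displayed map itself, surjectivity is exactly the composition of the two constructions above: given a finite-index subgroup, build $W$, then normalize to get $f : Y \to X$, and check $\bigl(f|_{Y \setminus f^{-1}(A)}\bigr)_* \what{\pi}_1(Y \setminus f^{-1}(A))$ recovers the chosen subgroup --- this holds because $Y \setminus f^{-1}(A) \cong W$ over $X \setminus A$. For injectivity, suppose two holomorphic covers $f_1, f_2$ branched only over $A$ induce conjugate subgroups; then their restrictions over $X \setminus A$ are isomorphic topological, hence (by the local biholomorphism condition) biholomorphic covers, and by the uniqueness part of the Grauert--Remmert normalization (the normalization of $X$ in a given extension is unique up to canonical isomorphism) this biholomorphism extends to a biholomorphism $Y_1 \cong Y_2$ over $X$. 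One should note that $A \subsetneq X$ proper analytic and $X$ normal guarantee $X \setminus A$ is connected and dense with $X \setminus A \hookrightarrow X$ inducing a surjection on (profinite) $\pi_1$, so that every cover of $X$ that is étale outside $A$ restricts to a \emph{connected} cover of $X \setminus A$, keeping both sides matched.

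The main obstacle is the passage from the purely topological covering $W \to X \setminus A$ to a \emph{finite} holomorphic map $Y \to X$ that is normal and branched only along $A$: one must verify that the Grauert--Remmert normalization of $X$ in the extension defined by $W$ is finite over $X$ (which uses that $A$ is thin/analytic so that $W$ has finite-degree monodromy and finitely-generated function field extension) and that it is unramified precisely outside $A$, i.e.\ that no spurious branching is introduced over $A$ and, more importantly, that the extension across $A$ is unique. This is where ``fundamental results of Grauert, Remmert, and Stein'' enter; all the needed statements are classical (see \cite{CAS}), and the verification is a matter of citing them correctly rather than of new argument.
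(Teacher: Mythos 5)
Your proposal is correct and follows essentially the same route as the paper: the topological classification of finite covers of $X \setminus A$ (with the unique complex structure making the covering map locally biholomorphic), combined with the unique Grauert--Remmert--Stein extension of an unbranched cover of $X \setminus A$ to a finite normal branched cover of $X$, which the paper cites in its modern form as \cite[Thm.~3.4]{DethloffGrauert}. The only difference is cosmetic: you phrase the extension step as a normalization construction and cite the classical sources, whereas the paper invokes the ready-made existence-and-uniqueness statement.
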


\begin{rem}[Equivalence of categories]
  The map of Proposition~\ref{prop:SteinTop} induces an equivalence of
  categories, but we will not need this stronger statement.
\end{rem}

\begin{proof}[Proof of Proposition~\ref{prop:SteinTop}]
  The classification of finite connected topological covering spaces of $X
  \setminus A$, \cite[Thm.~1.38]{Hatcher}, and the fact that any such cover can
  be given a uniquely determined complex structure that makes the covering map
  holomorphic and locally biholomorphic, \cite[§~1.3]{DethloffGrauert}, together
  establish a correspondence between isomorphism classes of locally
  biholomorphic covering maps $f°: Y° → X\setminus A$ and conjugacy classes of
  finite-index subgroups of $\what{π}_1(X \setminus A)$, by sending $f°$ to
  $(f°)_* \; \what{π}_1 \bigl(Y° \bigl)$.

  In order to prove our claim, it therefore suffices to show that any locally
  biholomorphic holomorphic covering map $f°: Y° → X\setminus A$ can be uniquely
  extended to a holomorphic covering map $f : Y → X$, where $Y$ is a connected
  normal complex space.  This however is a special case of
  \cite[Thm.~3.4]{DethloffGrauert}; for this, observe that the assumption made in
  \cite[Thm.~3.4]{DethloffGrauert} on the analyticity of $A ∪ B$ is
  automatically fulfilled in our setup, since the critical locus of the covering
  that we aim to extend is empty.\end{proof}

\begin{rem}
  Theorem \cite[Thm.~3.4]{DethloffGrauert} is a modern version of results
  obtained by Grauert-Remmert and Stein in their fundamental papers
  \cite{MR0103285, MR0083045} dating back to the 1950's.
\end{rem}

\begin{cor}[Characterisation of biholomorphic maps]\label{cor:SteinTop}
  Consider a sequence of holomorphic covering maps, $W_2 \xrightarrow{γ} W_1
  \xrightarrow{η} V$.  Let $A \subsetneq V$ be any a proper analytic subset such
  that $η$ and $η ◦ γ$ are locally biholomorphic away from $A$.  Then $γ$ is
  biholomorphic if and only if the following two subgroups of $\what{π}_1 \bigl(
  V \setminus A \bigr)$ agree up to conjugation,
  $$
  η_* \; \what{π}_1 \bigl(W_1 \setminus η^{-1}(A) \bigr) \quad \text{and} \quad
  (η ◦ γ)_* \; \what{π}_1 \bigl(W_2 \setminus (η◦ γ)^{-1}(A) \bigr).
  $$
\end{cor}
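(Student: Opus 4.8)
The strategy is to reduce the statement to the bijectivity established in Proposition~\ref{prop:SteinTop}, applied to the composite covering $η ∘ γ : W_2 → V$. First I would record the easy direction: if $γ$ is biholomorphic, then $η ∘ γ$ and $η$ are isomorphic covering maps of $V$, so they have the same image in $\what{π}_1(V \setminus A)$ under the map of Proposition~\ref{prop:SteinTop}; one only has to observe that $(η ∘ γ)^{-1}(A) = γ^{-1}(η^{-1}(A))$ and that $γ$ restricts to a biholomorphism $W_2 \setminus (η ∘ γ)^{-1}(A) \xrightarrow{\sim} W_1 \setminus η^{-1}(A)$, so that the two subgroups literally coincide (not merely up to conjugation).

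For the converse, suppose the two subgroups
$$
H_1 := η_* \, \what{π}_1\bigl(W_1 \setminus η^{-1}(A)\bigr) \quad\text{and}\quad H_2 := (η ∘ γ)_* \, \what{π}_1\bigl(W_2 \setminus (η ∘ γ)^{-1}(A)\bigr)
$$
agree up to conjugation in $\what{π}_1(V \setminus A)$. Both $η : W_1 → V$ and $η ∘ γ : W_2 → V$ are holomorphic covering maps that are locally biholomorphic away from $A$ (for $η ∘ γ$ this uses that a composition of such maps is again of this type, and its critical locus lies over $A$). Hence both lie in the left-hand set of the bijection in Proposition~\ref{prop:SteinTop} for the pair $(V, A)$, and they are sent to the \emph{same} conjugacy class of finite-index subgroups. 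By injectivity of that map, $η$ and $η ∘ γ$ are isomorphic as covering maps of $V$: there is a biholomorphism $ψ : W_2 → W_1$ with $η ∘ ψ = η ∘ γ$.

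It remains to upgrade this isomorphism of covers of $V$ to the statement that $γ$ itself is biholomorphic. The point is that $ψ$ and $γ$ are two holomorphic maps $W_2 → W_1$ satisfying $η ∘ ψ = η ∘ γ$, i.e.\ both are lifts of the covering map $η ∘ γ$ through $η$. Restricting over $V \setminus A$, where $η$ is a genuine (unbranched) covering space in the topological sense, the uniqueness-of-lifts principle for connected covering spaces shows that $ψ$ and $γ$ agree on the dense open set $W_2 \setminus (η ∘ γ)^{-1}(A)$ after composing $ψ$ with a suitable deck transformation of $W_1$ over $V$ — and such a deck transformation exists precisely because the two subgroups agree up to conjugation rather than on the nose. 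Since $W_1$ and $W_2$ are normal complex spaces and the maps agree on a dense open subset, they agree everywhere; thus $γ = (\text{deck transformation}) ∘ ψ$ is biholomorphic. The main obstacle is this last bookkeeping step: one must be careful that ``equal up to conjugation'' in Proposition~\ref{prop:SteinTop} corresponds exactly to the ambiguity of composing with a deck transformation, and that normality of the $W_i$ licenses extending the identity of maps from the complement of $η^{-1}(A)$ across the branch locus.
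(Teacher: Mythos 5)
Your reduction to Proposition~\ref{prop:SteinTop} runs parallel to the paper's proof: the easy direction is identical, and in the converse direction both you and the paper invoke injectivity of the correspondence to obtain a biholomorphism $\psi \colon W_2 \to W_1$ with $\eta\circ\psi=\eta\circ\gamma$. The gap is in your endgame. You assert that $\gamma = d\circ\psi$ on $W_2\setminus(\eta\circ\gamma)^{-1}(A)$ for a suitable deck transformation $d$ of $\eta$, and justify the existence of $d$ by saying it is ``precisely'' the ambiguity corresponding to conjugation. That is not a proof: $\eta$ restricted over $V\setminus A$ need not be a normal covering, so its deck group need not act transitively on fibres, and conjugacy of the two subgroups does not by itself produce a deck transformation carrying $\psi(w)$ to $\gamma(w)$ for a chosen base point $w$. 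The assertion is in fact true, but for a reason you would have to supply: since $\gamma$ is a lift of $\eta\circ\gamma$ through $\eta$ and $\psi$ restricts to a homeomorphism over $V\setminus A$, one has
$$
\eta_*\,\pi_1\bigl(W_1\setminus\eta^{-1}(A),\,\psi(w)\bigr) \;=\; (\eta\circ\gamma)_*\,\pi_1\bigl(W_2\setminus(\eta\circ\gamma)^{-1}(A),\,w\bigr) \;\subseteq\; \eta_*\,\pi_1\bigl(W_1\setminus\eta^{-1}(A),\,\gamma(w)\bigr),
$$
and both subgroups have the same finite index in $\pi_1(V\setminus A)$ (namely the number of sheets of $\eta$ away from $A$), so the containment is an equality; equality of the stabilisers of the two points $\psi(w)$ and $\gamma(w)$ in the same fibre is exactly what yields the required $d$. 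A second loose end: this $d$ is a priori defined only on $W_1\setminus\eta^{-1}(A)$, so before ``extending the identity of maps across the branch locus'' you must first extend $d$ itself to $W_1$ (again via the Grauert--Remmert type extension underlying Proposition~\ref{prop:SteinTop}), since one side of the claimed identity is otherwise not defined on all of $W_2$.

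The paper sidesteps all of this with a degree count: once $\eta$ and $\eta\circ\gamma$ are isomorphic covers of $V$, their fibres over any point of $V\setminus A$ have the same cardinality, hence $\deg\gamma=1$, and the analytic version of Zariski's Main Theorem, using normality of $W_1$, shows that the finite degree-one map $\gamma$ is biholomorphic. If you replace your deck-transformation step by this two-line argument (or insert the finite-index argument above and the extension of $d$), your proof is complete; as it stands, the key step is asserted rather than proved.
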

\begin{proof}
  If $γ$ is biholomorphic, then $W_1 \setminus η^{-1}(A)$ and $W_2 \setminus (η◦
  γ)^{-1}(A)$ are isomorphic over $V \setminus A$.  This implies that the two
  subgroups $η_* \; \what{π}_1 \bigl(W_1 \setminus η^{-1}(A) \bigr)$ and $(η ◦
  γ)_* \; \what{π}_1 \bigl(W_2 \setminus (η◦ γ)^{-1}(A) \bigr)$ agree up to
  conjugation.

  Now assume that $η_* \; \what{π}_1 \bigl(W_1 \setminus η^{-1}(A) \bigr)$ and
  $(η ◦ γ)_* \; \what{π}_1 \bigl(W_2 \setminus (η◦ γ)^{-1}(A) \bigr)$ are
  conjugate.  Then, Proposition~\ref{prop:SteinTop} implies that the covering
  maps $η$ and $η ◦ γ$ are isomorphic.  In other words, there exists a
  biholomorphic map $μ: W_1 → W_2$ such that $(η ◦ γ) ◦μ = η$.  For any point $p
  ∈ V \setminus A$, we therefore have $|(η ◦ γ)^{-1}(p)|= |η^{-1}(p)|$.  Hence,
  the degree of $γ$ is equal to one.  As $W_1$ is normal, the analytic version
  of Zariski's Main Theorem implies that $γ$ is biholomorphic, as claimed.
\end{proof}
  }

%
%
\svnid{$Id: 04.tex 874 2015-07-28 13:16:41Z kebekus $}

\section{Chern classes on singular varieties}
\label{sec:ChernSing}
\subversionInfo

\subsection{Intersection numbers on singular varieties}
\approvals{Greb & yes \\ Kebekus & yes \\ Peternell & yes}

To prove the characterisation of torus quotients given in
Theorem~\ref{thm:torus}, we need to discuss intersection numbers of line bundles
with Chern classes of reflexive sheaves on varieties with canonical
singularities.  For our purposes, it is actually not necessary to define Chern
classes themselves.  The literature discusses several competing notions of Chern
classes on singular spaces, all of which are technically challenging;
cf.~\cite{MacPherson74, Aluffi06}.  For the reader's convenience, we have chosen
to include a short, self-contained presentation, restricting ourselves to the
minimal material required for the proof.

\begin{defn}[Resolution of a space and a coherent sheaf]\label{def:resOfCS}
  Let $X$ be a normal variety and $\sE$ a coherent sheaf of $\sO_X$-modules.  A
  \emph{resolution of $(X,\sE)$} is a proper, birational and surjective morphism
  $π : \wtilde X → X$ such that the space $\wtilde X$ is smooth, and such that
  the sheaf $π^*(\sE)/\tor$ is locally free.  If $π$ is isomorphic over the open
  set where $X$ is smooth and $\sE/\tor$ is locally free, we call $π$ a
  \emph{strong resolution of $(X,\sE)$}.
\end{defn}

\begin{rem}
  In the setup of Definition~\ref{def:resOfCS}, the existence of a resolution of
  singularities combined with a classical result of Rossi,
  \cite[Thm.~3.5]{Rossi68}, shows that resolutions and strong resolutions of
  $(X,\sE)$ exist.
\end{rem}

\begin{defn}[Intersection of Chern class with Cartier divisors]\label{def:A}
  Let $X$ be a normal, $n$-dimensional, quasi-projective variety and $\sE$ be a
  coherent sheaf of $\sO_X$-modules.  Assume we are given a number $i ∈ \bN^+$
  such that $X$ is smooth in codimension $i$ and such that $\sE$ is locally free
  in codimension $i$.  Given any resolution morphism $π: \wtilde X → X$ of $(X,
  \sE)$ and any set of Cartier divisors $L_1, … L_{n-i}$ on $X$, we use the
  following shorthand notation
  $$
  c_i(\sE) · L_1 \cdots L_{n-i} := c_i(\sF) · (π^* L_1) \cdots (π^*
  L_{n-i}) ∈ \bZ.
  $$
  where $\sF := π^*\sE/\tor$, and where $c_i(\sF)$ denotes the classical Chern
  class of the locally free sheaf $\sF$ on the smooth variety $\wtilde X$.  More
  generally, if $P(y_1, …, y_n)$ is a homogeneous polynomial of degree $i$ for
  weighted variables $y_j$ with $\deg y_j = j$, we will use the shorthand
  notation
  $$
  P\bigl(c_1(\sE), …, c_n(\sE)\bigr) · L_1 \cdots L_{n-i} := P
  \bigl(c_1(\sF), …, c_n(\sF) \bigr) · (π^* L_1) \cdots (π^* L_{n-i}).
  $$
\end{defn}

\begin{rem}[Independence of resolution]
  \Preprint{In the setting of Definition~\ref{def:A}, given another smooth
    variety $\what X$ and a diagram
    $$
    \xymatrix{ %
      \what X \ar[rrrr]_{η, \text{ proper, surjective, birational}}
      \ar@/^0.5cm/[rrrrrr]^{γ := π ◦ η}&&&& \wtilde X \ar[rr]_{π}
      && X, }
    $$
    then $(γ^* \sE)/\tor \cong η^* \bigl( (π^* \sE)/\tor \bigr)$.  It follows
    immediately from the projection formula for Chern classes, \cite[Thm.~3.2 on
    p.~50]{Fulton98}, that
    $$
    c_i\bigl( (π^*\sE)/\tor) · (π^* L_1) \cdots (π^* L_{n-i}) =
    c_i\bigl( (γ^*\sE)/\tor \bigr) · (γ^* L_1) \cdots (γ^*
    L_{n-i}).
    $$
    The same holds for the more complicated expressions involving the polynomial
    $P$.} Since any two resolution maps are dominated by a common third, the
  numbers defined in Definition~\ref{def:A} are independent of the choice of the
  resolution map.
\end{rem}

\begin{rem}[Alternative computation for semiample divisors]\label{rem:altComp}
  In the setup of Definition~\ref{def:A}, assume we are given numbers
  $m_1, …, m_{n-i} ∈ \bN^+$ and basepoint-free linear systems $B_j ⊆ |m_j L_j|$,
  for all $1 ≤ j ≤ n-i$.  Choose general elements $Δ_j ∈ B_j$ and consider the
  intersection
  $$
  S := Δ_1 ∩ \cdots ∩ Δ_{n-i}.
  $$
  Observe that $S$ is smooth, entirely contained in the smooth locus $X_{\reg} ⊆
  X$, that $\sE$ is locally free along $S$, and that
  $$
  P\bigl(c_1(\sE),…, c_n(\sE)\bigr) · L_1 \cdots L_{n-i} = \frac{P\bigl( c_1
    \bigl(\sE|_S\bigr),…, c_n\bigl(\sE|_S\bigr) \bigr)}{m_1 \cdots m_{n-i}} ∈
  \bZ,
  $$
  where the left hand side is given as in Definition~\ref{def:A} above.
\end{rem}

The following proposition is a fairly direct consequence of
Remark~\ref{rem:altComp} and the projection formula.

\begin{prop}[Behaviour under covers, étale in codimension $i$]\label{prop:3112}
  In the setup of Definition~\ref{def:A}, if $γ : \wtilde X → X$ is any cover,
  étale in codimension $i$, then $\wtilde X$ is smooth in codimension $i$, the
  sheaf $γ^*\sE$ is locally free in codimension $i$, and \Publication{
    \begin{multline*}
      (\deg γ) · P\bigl(c_1(\sE), …, c_n(\sE)\bigr) · L_1 \cdots L_{n-i} \\
      = P\bigl(c_1(γ^*\sE), …, c_n(γ^*\sE)\bigr) · (γ^*L_1) \cdots (γ^*L_{n-i}).  \qed
    \end{multline*}
  }
  \Preprint{
    \begin{multline*}
      (\deg γ) · P\bigl(c_1(\sE), …, c_n(\sE)\bigr) · L_1 \cdots L_{n-i} \\
      = P\bigl(c_1(γ^*\sE), …, c_n(γ^*\sE)\bigr) · (γ^*L_1) \cdots (γ^*L_{n-i}).
    \end{multline*}
  }
\end{prop}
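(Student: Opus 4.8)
The plan is to first dispose of the two ``smoothness and local freeness'' assertions by a purity argument, and then to compute both intersection numbers on one and the same complete-intersection surface, where the claimed equality becomes the projection formula for a finite étale map. For the first part, I would observe that the quasi-étale cover $\gamma$ is in fact étale over the \emph{entire} smooth locus $X_{\reg}$, not merely in codimension one: by purity of the branch locus, the locus in $X_{\reg}$ over which $\gamma$ fails to be étale is empty or of pure codimension one, and since $\gamma$ is étale in codimension one it must be empty. Hence $\gamma$ is étale over the open set $X^{\circ} \subseteq X$ on which $X$ is smooth and $\sE$ is locally free; here $\codim_X(X \setminus X^{\circ}) \ge i + 1$, and since $\gamma$ is finite also $\codim_{\widetilde X}\bigl(\widetilde X \setminus \gamma^{-1}(X^{\circ})\bigr) \ge i + 1$. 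Over $\gamma^{-1}(X^{\circ})$ the variety $\widetilde X$ is smooth, and $\gamma^{*}\sE$, being the pull-back of the locally free sheaf $\sE|_{X^{\circ}}$ along an étale morphism, is locally free. This proves the first two assertions and shows that the data $\widetilde X$, $\gamma^{*}\sE$, $\gamma^{*}L_1, \dots, \gamma^{*}L_{n-i}$ again satisfy the hypotheses of Definition~\ref{def:A}.

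For the numerical identity I would first reduce to the case in which all $L_j$ are very ample: both sides are multilinear in $(L_1, \dots, L_{n-i})$ --- on the left by Definition~\ref{def:A}, on the right by Definition~\ref{def:A} applied to a resolution of $(\widetilde X, \gamma^{*}\sE)$ --- and, after passing to a projective compactification over which the relevant intersection numbers are computed (in all the applications $X$ is projective anyway), every Cartier divisor is a difference of very ample divisors. So assume each $L_j$ is very ample and pick general elements $\Delta_j \in |L_j|$. By Bertini's theorem $S := \Delta_1 \cap \dots \cap \Delta_{n-i}$ is a smooth, irreducible, $i$-dimensional subvariety, and since $X \setminus X^{\circ}$ has codimension $\ge (n-i) + 1$ a general such $S$ avoids it; thus $S \subset X^{\circ}$, and Remark~\ref{rem:altComp} (with all $m_j = 1$) identifies the left-hand intersection number with $\deg\bigl( P(c_1(\sE|_S), \dots) \cap [S] \bigr)$. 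Now set $\widetilde S := \gamma^{-1}(S) = \gamma^{*}\Delta_1 \cap \dots \cap \gamma^{*}\Delta_{n-i}$. Since $\gamma^{*}$ identifies $|L_j|$ with a basepoint-free linear subsystem of $|\gamma^{*}L_j|$, each $\gamma^{*}\Delta_j$ is a general member of that subsystem, so Remark~\ref{rem:altComp} applied to $(\widetilde X, \gamma^{*}\sE, \gamma^{*}L_1, \dots, \gamma^{*}L_{n-i})$ identifies the right-hand intersection number with $\deg\bigl( P(c_1((\gamma^{*}\sE)|_{\widetilde S}), \dots) \cap [\widetilde S] \bigr)$.

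It then remains to compare these two surface computations. As $S \subset X^{\circ}$, the restriction $g := \gamma|_{\widetilde S} \colon \widetilde S \to S$ is finite and étale of degree $\deg\gamma$, so $g_{*}[\widetilde S] = (\deg\gamma)\,[S]$; moreover $(\gamma^{*}\sE)|_{\widetilde S} = g^{*}(\sE|_S)$, hence $c_k\bigl((\gamma^{*}\sE)|_{\widetilde S}\bigr) = g^{*}c_k(\sE|_S)$ for every $k$ because Chern classes commute with pull-back. The projection formula then yields
\[
  \deg\!\bigl( P(c_1((\gamma^{*}\sE)|_{\widetilde S}), \dots) \cap [\widetilde S] \bigr)
  = \deg\!\bigl( P(c_1(\sE|_S), \dots) \cap g_{*}[\widetilde S] \bigr)
  = (\deg\gamma)\cdot\deg\!\bigl( P(c_1(\sE|_S), \dots) \cap [S] \bigr),
\]
and combining this with the two applications of Remark~\ref{rem:altComp} gives the proposition. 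I do not expect a genuine obstacle; the single point that really requires care is the purity observation above, since without knowing that $\gamma$ is étale over \emph{all} of $X_{\reg}$ the codimension-$(i+1)$ bounds break down and $g$ need no longer be étale. Everything after that is the projection formula applied on top of the Bertini-type computation already isolated in Remark~\ref{rem:altComp}.
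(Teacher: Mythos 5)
Your proposal is correct and follows essentially the same route as the paper's proof: purity of the branch locus for the smoothness and local-freeness claims, reduction to very ample divisors, restriction to a general complete intersection $S ⊂ X_{\reg}$ avoided by the branch locus, and then Remark~\ref{rem:altComp} on both $S$ and $\wtilde S = γ^{-1}(S)$ combined with the projection formula for the finite étale map $γ|_{\wtilde S}$. The only difference is that you spell out the step (generality of $γ^*Δ_j$ in the pulled-back linear subsystem, pull-back of Chern classes) in more detail than the paper does.
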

\Preprint{\begin{proof}
    The statements about smoothness of $\wtilde X$ and about local freeness of
    $γ^*\sE$ follow from purity of the branch locus.
    
    Since any Cartier divisor is linearly equivalent to the difference of two
    very ample divisors, we can assume without loss of generality that the
    divisors $L_1, …, L_{n-i}$ are very ample and general in basepoint-free
    linear systems.  The intersection $S := L_1 ∩ \cdots ∩ L_{n-i}$ is then
    smooth, entirely contained in $X_{\reg}$, and the morphism $γ$ is étale near
    $S$.  Writing $\wtilde S := γ^{-1}(S) = γ^* L_1 ∩ \cdots ∩ γ^*L_{n-i}$,
    observe that the alternative computation of Remark~\ref{rem:altComp} applies
    to both $S$ and $\wtilde S$, and yields the following.
    \begin{multline*}
      P\bigl(c_1(γ^*\sE), …, c_n(γ^*\sE)\bigr) · (γ^*L_1) \cdots (γ^*L_{n-i}) \\
      \begin{aligned}
      & = P\bigl( c_1 \bigl( γ^*\sE|_{\wtilde S} \bigr), \cdots, c_n \bigl( γ^*\sE|_{\wtilde S} \bigr) \bigr) && \text{Remark~\ref{rem:altComp} for $\wtilde X$} \\
      & = \deg γ · P\bigl( c_1( \sE|_S ), …, c_n( \sE|_S ) \bigr)&& \text{Projection Formula} \\
      & = \deg γ · P\bigl( c_1(\sE), …, c_1(\sE) \bigr) · L_1 \cdots L_{n-i} && \text{Remark~\ref{rem:altComp} for $X$.}
      \end{aligned}
    \end{multline*}
    This finishes the proof of Proposition~\ref{prop:3112}.
\end{proof}}

\subsection{Numerically trivial Chern classes}

Theorem~\ref{thm:torus} discusses varieties $X$ where the intersection numbers
$c_2(\sT_X) · H_1 \cdots H_{n-2}$ vanish for certain ample divisors $H_1,
\dots, H_{n-2}$.  We will see in Proposition~\ref{prop:trivialityCriterion} that
this sometimes implies that all intersection numbers with arbitrary divisors
vanish.  The following definition is relevant in the discussion.

\begin{defn}[Numerically trivial Chern class]\label{def:B}
  In the setting of Definition~\ref{def:A}, we say that \emph{$c_i(\sE)$ is
    numerically trivial}, if $c_i(\sE) · L_1 \cdots L_{n-i} = 0$ for all
  Cartier divisors $L_1, …, L_{n-i}$ on $X$.
\end{defn}

\begin{prop}[Criterion for numerical triviality of $c_2(\sT_X)$]\label{prop:trivialityCriterion}
  Let $X$ be a normal, $n$-dimensional, projective variety that is smooth in
  codimension two.  Assume that the canonical divisor $K_X$ is $\bQ$-Cartier and
  nef.  Then, $c_2(\sT_X)$ is numerically trivial if and only if there
  \emph{exist} ample Cartier divisors $H_1, …, H_{n-2}$ on $X$ such that
  \begin{equation}\label{eq:mxtriv}
    c_2(\sT_X) · H_1 \cdots H_{n-2} = 0
  \end{equation}
\end{prop}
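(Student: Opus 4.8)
The plan is to prove the nontrivial implication: assuming there exist *some* ample Cartier divisors $H_1, \dots, H_{n-2}$ with $c_2(\sT_X) \cdot H_1 \cdots H_{n-2} = 0$, we must show that $c_2(\sT_X) \cdot L_1 \cdots L_{n-2} = 0$ for *arbitrary* Cartier divisors $L_1, \dots, L_{n-2}$. The key input will be an inequality à la Bogomolov, or more precisely the content of Miyaoka-type / Bogomolov–Gieseker semipositivity for $c_2$ of the tangent sheaf in the presence of nef canonical class: on a variety with $K_X$ nef (and smooth in codimension two), the tangent sheaf $\sT_X$ is semistable with respect to every ample polarisation — this is a standard consequence of the fact that $\sT_X$ cannot have a destabilising subsheaf without producing a subvariety covered by rational curves, contradicting $K_X$ nef (via the Miyaoka–Mori / Bogomolov–McQuillan circle of ideas, or, more classically, Miyaoka's generic semipositivity of $\Omega_X$). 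From semistability with respect to a movable class one obtains the Bogomolov–Gieseker inequality $\bigl(2n\,c_2(\sT_X) - (n-1)\,c_1(\sT_X)^2\bigr) \cdot H_1 \cdots H_{n-2} \ge 0$ for all nef classes $H_i$; combining this with $c_1(\sT_X)^2 \cdot (\text{nef})^{n-2} = (K_X)^2 \cdot (\text{nef})^{n-2} \ge 0$ (since $K_X$ is nef) shows that $c_2(\sT_X) \cdot H_1 \cdots H_{n-2} \ge 0$ for *all* nef $H_i$. The precise form of semistability needed here, and its availability on klt or canonical spaces smooth in codimension two, should be quotable from the references the paper already cites (Simpson, Miyaoka, and the singular adaptations); the intersection numbers themselves are well-defined by Definition~\ref{def:A} since $X$ is smooth in codimension two.

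Granting the semipositivity statement "$c_2(\sT_X) \cdot (\text{nef})^{n-2} \ge 0$", the proof is then a purely formal positivity argument. First I would reduce to the case that all $L_i$ are Cartier and very ample: a general Cartier divisor is a difference of two very ample divisors, and by multilinearity it suffices to bound $c_2(\sT_X) \cdot A_1 \cdots A_{n-2}$ from above and below when each $A_i$ is very ample. The lower bound is the semipositivity above. For the upper bound, pick one fixed ample Cartier divisor, say $H := H_1$ from the hypothesis, and for each very ample $A_i$ choose an integer $m_i \gg 0$ so that $m_i H - A_i$ is still ample (hence nef). Expanding $c_2(\sT_X) \cdot \prod_i (m_i H)$ via $m_i H = A_i + (m_i H - A_i)$ and discarding all terms except the pure $A_1 \cdots A_{n-2}$ term — each discarded term is a product of $c_2(\sT_X)$ with nef classes, hence $\ge 0$ — yields
$$
0 \le c_2(\sT_X) \cdot A_1 \cdots A_{n-2} \le \Bigl(\prod_i m_i\Bigr)\, c_2(\sT_X) \cdot H^{n-2} = 0,
$$
where the last equality is exactly the hypothesis \eqref{eq:mxtriv} (after checking that all $H_j$ may be taken equal to a single $H$ — if not, one runs the same argument with the fixed polarisation $H_0 := H_1 + \cdots + H_{n-2}$, for which $c_2(\sT_X) \cdot H_0^{n-2}$ expands into a nonnegative sum of terms each dominated by the hypothesis, forcing each to vanish). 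Hence $c_2(\sT_X) \cdot A_1 \cdots A_{n-2} = 0$ for all very ample $A_i$, and by the reduction step $c_2(\sT_X) \cdot L_1 \cdots L_{n-2} = 0$ for all Cartier $L_i$, which is the definition of numerical triviality.

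The converse implication is immediate: numerical triviality of $c_2(\sT_X)$ means $c_2(\sT_X) \cdot L_1 \cdots L_{n-2} = 0$ for *all* Cartier divisors, in particular for ample ones, so the existence in \eqref{eq:mxtriv} is automatic. The main obstacle in the argument is the semipositivity statement $c_2(\sT_X) \cdot (\text{nef})^{n-2} \ge 0$ under the hypothesis that $K_X$ is nef — this is where all the geometric content lies, and it must be extracted from semistability of $\sT_X$ (via $K_X$ nef $\Rightarrow$ $X$ not covered by rational curves through a general point $\Rightarrow$ no destabilising subsheaf of $\sT_X$) together with the Bogomolov–Gieseker inequality for semistable reflexive sheaves, applied on the smooth-in-codimension-two space $X$ where the relevant Chern numbers are defined. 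Once that inequality is in hand, everything else is the elementary "difference of ample divisors plus multilinearity" manipulation sketched above.
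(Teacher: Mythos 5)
Your overall strategy matches the paper's: everything reduces to the semipositivity statement that $c_2(\sT_X)$ paired with ample (or nef) classes is nonnegative when $K_X$ is nef, followed by a purely formal multilinearity-and-squeeze argument. However, your proposed justification of that semipositivity is wrong as sketched. Nefness of $K_X$ does \emph{not} imply that $\sT_X$ is semistable with respect to any polarisation, and ``$X$ not covered by rational curves through a general point'' does not exclude destabilising subsheaves of $\sT_X$: take $X = C_1 \times C_2$ with $g(C_1) \geq 2$ and $C_2$ elliptic; then $K_X$ is nef, $X$ is not uniruled, yet $p_2^* \sT_{C_2} \subset \sT_X$ destabilises for any product polarisation. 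So the chain ``semistability of $\sT_X$ plus Bogomolov--Gieseker'' is not available. What is true, and what the paper quotes, is Miyaoka's theorem \cite[Thm.~6.6]{Miyaoka87}: if $K_X$ is $\bQ$-Cartier and nef, then $c_2(\sT_{\wtilde X}) \cdot \pi^*A_1 \cdots \pi^*A_{n-2} \geq 0$ for ample $A_i$ and a resolution $\pi$; its proof goes through generic semipositivity of $\Omega^1_X$ and the Harder--Narasimhan filtration, not through semistability of the tangent sheaf. The paper transfers this to the singular $X$ by computing on a general complete intersection surface inside $X_{\reg}$, along which the strong resolution is an isomorphism (Remark~\ref{rem:altComp} and Claim~\ref{claim:MiyaSP}); you would need the same intermediate step, since Bogomolov--Gieseker-type inequalities are not directly stated on $X$ itself.

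There is a second, smaller gap in your formal part: the reduction to a single polarisation does not work as stated. From the hypothesis $c_2(\sT_X)\cdot H_1 \cdots H_{n-2} = 0$ you cannot conclude $c_2(\sT_X)\cdot H_0^{n-2} = 0$ for $H_0 := H_1 + \cdots + H_{n-2}$, because the expansion of $H_0^{n-2}$ contains nonnegative terms such as $c_2(\sT_X)\cdot H_1^{n-2}$ that the hypothesis says nothing about --- it kills only the one mixed monomial. The fix is simply not to symmetrise: keep each $H_i$ in its own slot, choose $m_i \gg 0$ with $m_i H_i - A_i$ nef, and expand $0 = \bigl(\prod_i m_i\bigr)\, c_2(\sT_X)\cdot H_1 \cdots H_{n-2}$ as a sum of nonnegative terms, one of which is $c_2(\sT_X)\cdot A_1 \cdots A_{n-2}$. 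This repaired version is essentially the paper's argument, which arranges $H_i \pm L_i$ to be ample (Assumption~\ref{ass:amplitude}), proves by induction that $c_2(\sT_X)\cdot (H_1+L_1)\cdots(H_k+L_k)\cdot H_{k+1}\cdots H_{n-2}=0$ (Consequence~\ref{cons:intA}), and then expands the case $k=n-2$ into a sum of nonnegative terms containing $c_2(\sT_X)\cdot L_1\cdots L_{n-2}$. With the Miyaoka citation in place of your semistability claim, and the mixed-slot expansion in place of the $H_0$ reduction, your proof coincides with the paper's.
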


Before proving Proposition~\ref{prop:trivialityCriterion} in
Section~\ref{ssec:PFtricrit} below, we note the following immediate corollary of
Propositions~\ref{prop:3112} and \ref{prop:trivialityCriterion}.

\begin{cor}[Behaviour of numerically trivial $c_2(\sT_X)$ under coverings]\label{cor:311}
  Let $X$ be a normal, $n$-dimensional, projective variety that is smooth in
  codimension two.  Assume that the canonical divisor $K_X$ is $\bQ$-Cartier and
  nef.  If $γ : X' → X$ is any cover, étale in codimension two, then $X'$ is smooth in
  codimension two, and $K_{X'}$ is $\bQ$-Cartier and nef.  Moreover,
  $c_2(\sT_X)$ is numerically trivial if and only if $c_2(\sT_{X'})$ is
  numerically trivial.  \qed
\end{cor}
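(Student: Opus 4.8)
The plan is to deduce Corollary~\ref{cor:311} by combining Proposition~\ref{prop:3112} with Proposition~\ref{prop:trivialityCriterion} and a handful of standard facts about pullback along finite morphisms; nothing beyond careful bookkeeping should be required.

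First I would settle the statements that do not mention Chern classes. Applying Proposition~\ref{prop:3112} to the pair $(X,\sT_X)$ with $i=2$ immediately gives that $X'$ is smooth in codimension two (this is also just purity of the branch locus). Since $\gamma$ is quasi-étale, its ramification divisor is empty, so the ramification formula reads $K_{X'}\sim_{\bQ}\gamma^*K_X$; as $K_X$ is $\bQ$-Cartier and nef and pullback along the finite surjective morphism $\gamma$ preserves both $\bQ$-Cartierness and nefness (the latter via the projection formula $\gamma^*K_X\cdot C=\deg(\gamma|_C)\,(K_X\cdot\gamma(C))\ge 0$ for curves $C\subset X'$), $K_{X'}$ is again $\bQ$-Cartier and nef. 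In particular, both $X$ and $X'$ now satisfy the hypotheses of Proposition~\ref{prop:trivialityCriterion}.

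The core of the argument is the identity
\[
(\deg\gamma)\cdot c_2(\sT_X)\cdot L_1\cdots L_{n-2}\;=\;c_2(\sT_{X'})\cdot(\gamma^*L_1)\cdots(\gamma^*L_{n-2})
\]
for all Cartier divisors $L_1,\dots,L_{n-2}$ on $X$. Proposition~\ref{prop:3112} gives exactly this, except with $c_2(\gamma^*\sT_X)$ on the right-hand side, so the one point that needs care --- and which I expect to be the only genuinely fiddly step --- is identifying the Chern numbers of $\gamma^*\sT_X$ and of $\sT_{X'}$. Here I would argue that these two sheaves agree outside a closed subset of $X'$ of codimension at least three: by Proposition~\ref{prop:3112} the sheaf $\gamma^*\sT_X$ is locally free, hence reflexive, in codimension two, so there it coincides with its reflexive hull $(\gamma^*\sT_X)^{**}$, which in turn is isomorphic to the reflexive sheaf $\sT_{X'}$ because the two agree over the étale locus of $\gamma$ (the complement of a set of codimension at least two). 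Since the intersection numbers of Definition~\ref{def:A} may be computed on a general complete-intersection surface (Remark~\ref{rem:altComp}), and such a surface avoids any fixed subset of codimension at least three, the restrictions of $\gamma^*\sT_X$ and $\sT_{X'}$ to it coincide, and so do the two Chern numbers.

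Finally I would read off the equivalence. If $c_2(\sT_X)$ is numerically trivial, pick ample Cartier divisors $H_1,\dots,H_{n-2}$ on $X$; then $\gamma^*H_1,\dots,\gamma^*H_{n-2}$ are ample on $X'$, and the displayed identity yields $c_2(\sT_{X'})\cdot(\gamma^*H_1)\cdots(\gamma^*H_{n-2})=(\deg\gamma)\cdot c_2(\sT_X)\cdot H_1\cdots H_{n-2}=0$, so Proposition~\ref{prop:trivialityCriterion} applied to $X'$ shows $c_2(\sT_{X'})$ is numerically trivial. Conversely, if $c_2(\sT_{X'})$ is numerically trivial, then for ample $H_1,\dots,H_{n-2}$ on $X$ the same identity gives $(\deg\gamma)\cdot c_2(\sT_X)\cdot H_1\cdots H_{n-2}=c_2(\sT_{X'})\cdot(\gamma^*H_1)\cdots(\gamma^*H_{n-2})=0$, hence $c_2(\sT_X)\cdot H_1\cdots H_{n-2}=0$, and Proposition~\ref{prop:trivialityCriterion} applied to $X$ completes the proof.
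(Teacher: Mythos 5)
Your argument is correct and follows exactly the route the paper intends: Corollary~\ref{cor:311} is stated there as an immediate consequence of Propositions~\ref{prop:3112} and \ref{prop:trivialityCriterion}, and you fill in precisely those details, including the one point the paper leaves implicit, namely that $γ^*\sT_X$ and $\sT_{X'}$ agree away from a set of codimension at least three (via the étale locus and $X$ being smooth in codimension two), so their Chern numbers in the sense of Definition~\ref{def:A} coincide. The bookkeeping for $K_{X'}$ and the use of ample divisors together with Remark~\ref{rem:altComp} are likewise in order, so no gap remains.
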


\subsection{Proof of Proposition~\ref*{prop:trivialityCriterion}}
\label{ssec:PFtricrit}

The proof of Proposition~\ref{prop:trivialityCriterion} spans the current
Section~\ref{ssec:PFtricrit}.  For the convenience of the reader, we have
subdivided the proof into several relatively independent steps.

\subsection*{Step 1: Setup of notation}
\label{sssec:SX1}

Let $H_1, …, H_{n-2}$ be ample Cartier divisors on $X$ such that
\eqref{eq:mxtriv} holds.  Let further Cartier divisors $L_1, …, L_{n-2}$ be
given, and let $π: \wtilde X → X$ be a strong resolution of singularities.  To
prove Proposition~\ref{prop:trivialityCriterion}, we need to show that
\begin{equation}\label{eq:mxtriv2}
  c_2(\sT_X) · L_1 \cdots L_{n-2} = 0.
\end{equation}
Since each of the $L_i$ can be written as a difference of very ample divisors,
it suffices to show the vanishing under the additional assumption that each of
the $L_i$ is ample.  Replacing the $H_i$ with sufficiently high powers, we can
even assume without loss of generality that the following holds.

\begin{asswlog}\label{ass:amplitude}
  For any index $i$, the divisors $L_i$, $H_i + L_i$ and $H_i - L_i$ are ample.
\end{asswlog}

\subsection*{Step 2: Miyaoka semipositivity}

In order to prove \eqref{eq:mxtriv2} we shall make use of Miyaoka's Semipositivity
Theorem.  In our setup, it asserts that the intersection numbers of $c_2(\sT_X)$
with nef Cartier divisors on $X$ are never negative.

\begin{claim}[Miyaoka semipositivity]\label{claim:MiyaSP}
  Assumptions as above.  If $A_1, …, A_{n-2}$ are ample Cartier divisors
  on $X$, then $c_2(\sT_X) · A_1 \cdots A_{n-2} ≥ 0$.
\end{claim}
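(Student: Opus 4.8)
The plan is to reduce the statement to Miyaoka's generic semipositivity theorem for the cotangent sheaf on the smooth locus, transported across the resolution $\pi : \wtilde X \to X$. First I would recall the setup: the intersection number $c_2(\sT_X) \cdot A_1 \cdots A_{n-2}$ is by Definition~\ref{def:A} computed as $c_2(\sF) \cdot (\pi^*A_1) \cdots (\pi^*A_{n-2})$, where $\sF := \pi^*\sT_X/\tor$ is locally free on $\wtilde X$. By Remark~\ref{rem:altComp}, replacing the $A_i$ by high multiples and choosing general members $\Delta_i$ of basepoint-free linear systems in $|m_i A_i|$, this number equals $\tfrac{1}{m_1 \cdots m_{n-2}} \cdot \bigl(c_2(\sT_X|_S)\bigr)$ where $S = \Delta_1 \cap \cdots \cap \Delta_{n-2}$ is a smooth surface contained in $X_{\reg}$. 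So it suffices to prove that $c_2(\sT_S') \geq 0$ for the restriction of the tangent sheaf to a sufficiently general complete-intersection surface $S$ cut out by very ample divisors — more precisely, that $c_2$ of the locally free sheaf $\sT_X|_S$ on the smooth surface $S$ is non-negative.

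The core input is Miyaoka's theorem: if $Y$ is a smooth projective variety with $K_Y$ pseudo-effective (or more specifically nef, which holds here since $K_X$ is nef and $S$ is a general complete intersection of ample divisors, so $K_S = (K_X + \sum \Delta_i)|_S$ is ample, hence $\sT_S$ has pseudo-effective $-c_1$... one must be careful here), then the cotangent bundle is generically semipositive, and in particular for a surface one deduces the Bogomolov–Miyaoka-type inequality $c_2 \geq 0$ after the appropriate normalisations. The cleanest route is: on the general surface $S$, one has the exact sequence $0 \to \sT_S \to \sT_X|_S \to N_{S/X} \to 0$ with $N_{S/X}$ a direct sum of the ample line bundles $\sO_S(\Delta_i)$. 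Whitney's formula gives $c_2(\sT_X|_S) = c_2(\sT_S) + c_1(\sT_S)\cdot c_1(N) + c_2(N)$. The terms involving $N$ are controlled (they are intersection numbers of the restricted ample $\Delta_i$ with $K_S$ and with each other, and by Asswlog-type amplitude choices these are handled), while $c_2(\sT_S)$ is bounded below by Miyaoka's inequality applied to the surface $S$ of non-negative Kodaira dimension. Alternatively, and this is probably the argument actually intended, one applies Miyaoka's semipositivity directly on $\wtilde X$: since $K_X$ is nef, $\pi^*K_X = K_{\wtilde X} - E$ for an effective $\pi$-exceptional divisor $E$, and one can run Miyaoka's generic semipositivity argument on $\wtilde X$ with respect to the movable class $(\pi^*A_1) \cdots (\pi^*A_{n-2})$, using that this class is nef and that restriction to a general complete intersection curve in that class gives a semistable (quotient of) restricted cotangent sheaf; the $c_2$ appears as the discriminant and is non-negative.

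The main obstacle I expect is bookkeeping the exceptional divisor $E$ of the resolution and verifying that Miyaoka's semipositivity — which is usually stated for the cotangent bundle of a smooth variety with respect to a single polarisation or a movable curve class — applies verbatim to the reflexive pullback $\sF = \pi^*\sT_X/\tor$ rather than to $\Omega^1_{\wtilde X} = \sT_{\wtilde X}^\vee$ itself. These two sheaves differ precisely along $E$, and one must check the $c_2$ computation against the nef class $(\pi^*A_i)$ is insensitive to this difference — which it is, because $(\pi^*A_i)$ pairs to zero with any exceptional cycle, so only the behaviour of $\sF$ in codimension $\leq 2$ on $X$ matters, and there $X$ is smooth and $\sF = \sT_{\wtilde X}$ up to the irrelevant locus. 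Once this identification is made, the statement is exactly Miyaoka's theorem; the proof should cite the relevant form of it (e.g. as in Miyaoka's original paper, or the account in the standard references on generic semipositivity) and note that nefness of $K_X$ guarantees the pseudo-effectivity hypothesis needed to invoke it.
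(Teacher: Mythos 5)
Your second route is essentially the paper's proof: using a strong resolution and the complete-intersection surfaces of Remark~\ref{rem:altComp} (which avoid $X_{\sing}$, where $\pi^*\sT_X$ and $\sT_{\wtilde X}$ can differ), the number is identified with $c_2(\sT_{\wtilde X})\cdot(\pi^*A_1)\cdots(\pi^*A_{n-2})$, and nefness of the $\bQ$-Cartier divisor $K_X$ then allows one to quote Miyaoka's theorem, \cite[Thm.~6.6]{Miyaoka87}, which asserts precisely that this quantity is non-negative. Your first alternative via the Whitney formula on $S$ is not what is needed and would require real work, since the cross term $-K_S\cdot c_1(N_{S/X})$ is negative and grows quadratically in the multiples $m_i$, so it is not ``controlled'' without invoking a Bogomolov--Miyaoka-type bound anyway; but since you single out the second route as the intended one, the proposal matches the paper.
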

\begin{proof}
  Let $S ⊂ X$ be a complete intersection surface, as introduced in
  Remark~\ref{rem:altComp}.  Observe that $S$ avoids the singularities of $X$,
  and that the strong resolution map $π$ is isomorphic along $S$.  Since
  $π^*(\sT_X)$ and $\sT_{\wtilde X}$ differ only along the $π$-exceptional set,
  Remark~\ref{rem:altComp} shows that
  $$
  c_2(\sT_X) · A_1 \cdots A_{n-2} = c_2(\sT_{\wtilde X}) · (π^* A_1)
  \cdots (π^* A_{n-2})
  $$
  Using the assumption that the canonical divisor $K_X$ is $\bQ$-Cartier and
  nef, Miyaoka shows in \cite[Thm.~6.6]{Miyaoka87} that the right hand side is
  always non-negative.
\end{proof}

As a first corollary of Claim~\ref{claim:MiyaSP} we obtain the following technical
result, which shows vanishing of a larger class of intersection numbers.

\begin{consequence}\label{cons:intA}
  Assumptions as above.  For all numbers $1≤ k≤ n-2$, we have the
  following vanishing of numbers,
  \begin{equation}\label{eq:mxtriv3}
    c_2(\sT_X) · (H_1 + L_1) \cdots (H_k + L_k) · H_{k+1} \cdots
    H_{n-2} = 0.
  \end{equation}
\end{consequence}
\begin{proof}
  We prove Consequence~\ref{cons:intA} by induction on $k$.  For $k=0$,
  Equation~\eqref{eq:mxtriv3} equals Equation~\eqref{eq:mxtriv}, which holds by
  assumption.  For the inductive step, assume that Equation~\eqref{eq:mxtriv3}
  holds for a given number $k$.  We need to show that it holds for $k+1$.  To
  this end, consider the following two computations,
  \begin{align}
    \label{eq:xxDSa} 0 & ≤ c_2(\sT_X) · (H_1 + L_1) \cdots (H_k + L_k) · (H_{k+1} \pm L_{k+1}) · H_{k+2} \cdots H_{n-2} \\
    \label{eq:xxDSb} & = \pm c_2(\sT_X) · (H_1 + L_1) \cdots (H_k + L_k) · L_{k+1} · H_{k+2} \cdots H_{n-2}.
  \end{align}
  The Inequalities~\eqref{eq:xxDSa} hold by Claim~\ref{claim:MiyaSP}, because
  the two bundles $H_{k+1} \pm L_{k+1}$ are both ample by
  Assumption~\ref{ass:amplitude}.  The Equalities~\eqref{eq:xxDSb} hold by
  induction hypothesis.  Together, the inequalities show that the two numbers
  computed must both be zero.  This finishes the proof of
  Consequence~\ref{cons:intA}.
\end{proof}

\subsection*{Step 3: End of proof}
\label{sssec:SX3}

To prove Equation~\eqref{eq:mxtriv2} and thereby finish the proof of
Proposition~\ref{prop:trivialityCriterion}, apply Consequence~\ref{cons:intA}
for $k=n-2$.  We obtain the equation
\begin{equation}\label{eq:intA2}
  c_2(\sT_X) · (H_1 + L_1) \cdots (H_{n-2} + L_{n-2}) = 0.
\end{equation}
Multiplying out, we write this number as a sum of the form
$$
0 = \sum\nolimits_j c_2(\sT_X) · A_{1,j} \cdots A_{n-2,j} \quad\text{where }
A_{i,j} ∈ \{H_i, L_i\} \text{ for all indices }i.
$$
Recalling that the divisors $H_i$ and $L_i$ are ample, Claim~\ref{claim:MiyaSP}
thus asserts that none of the summands is negative.  Summing up to zero, we
obtain that all summands must actually be zero.  To finish the proof of
Proposition~\ref{prop:trivialityCriterion}, observe that $c_2(\sT_X) · L_1
\cdots L_{n-2}$ is one of the summands involved.  \qed

%
%
\svnid{$Id: 05.tex 874 2015-07-28 13:16:41Z kebekus $}

\section{Bertini-type theorems for sheaves and their moduli}
\approvals{Greb & yes \\ Kebekus & yes \\ Peternell & yes}
\subversionInfo
\label{sec:bertini}

The proofs of our main results use the fact that two reflexive sheaves belonging
to a bounded family are isomorphic if and only if their restrictions to a
general hyperplane of sufficiently high degree agree.  The present section is
devoted to the proof of this auxiliary result.

\begin{prop}[Testing isom.~classes of reflexive sheaves on hyperplanes]\label{prop:rflxBertini-H}
  Let $X$ be a normal, projective variety of dimension $\dim X ≥ 2$ and $\sE$,
  $\sF$ two coherent, reflexive sheaves of $\sO_X$-modules.  Let $H ⊂ X$ be an
  irreducible, reduced, ample Cartier divisor on $X$ such that the following
  holds.
  \begin{enumerate}
  \item\label{il:pq1} The variety $H$ is normal.
  \item\label{il:pq2} Setting $\sH := \sHom \bigl(\sE,\, \sF\bigr)$, the
    restricted sheaves $\sE|_H$, $\sF|_H$ and $\sH|_H$ are reflexive.
  \item\label{il:pq3} Let $X' \subsetneq X$ be the minimal closed set such that
    $\sE$ and $\sF$ are locally free outside of $X'$.  Setting $H' := X' ∩ H$,
    we have $\codim_H H' ≥ 2$.
  \item\label{il:pq4} The cohomology group $H¹ \bigl( X, \, \sI_H ⊗ \sH
    \bigr)$ vanishes.
  \end{enumerate}
  Then, $\sE$ is isomorphic to $\sF$ if and only if the sheaves $\sE|_H$ and
  $\sF|_H$ are isomorphic.  \Publication{\qed}
\end{prop}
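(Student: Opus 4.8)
The direction ``$\sE \cong \sF \Rightarrow \sE|_H \cong \sF|_H$'' is trivial, so the plan is to establish the converse: given an isomorphism $φ_0 \colon \sE|_H → \sF|_H$, I would lift it to an isomorphism $\sE → \sF$. The first ingredient is the identification $\Hom_{\sO_H}(\sE|_H, \sF|_H) \cong H^0\bigl(H, \sH|_H\bigr)$. There is a natural morphism $\sH|_H → \sHom_{\sO_H}(\sE|_H, \sF|_H)$, which is an isomorphism over $H \setminus H'$ because $\sE$ is locally free there. Both sheaves are reflexive on the normal variety $H$ --- the source by Assumption~\ref{il:pq2}, the target because $\sF|_H$ is reflexive and $\sHom$ into a reflexive sheaf is reflexive, see \cite[\S~1]{MR597077} --- so, since $\codim_H H' ≥ 2$ by Assumption~\ref{il:pq3} and sections of a reflexive sheaf on a normal variety extend across closed sets of codimension at least two, passing to global sections over $H$ produces the asserted identification, under which $φ_0$ becomes a section $\overline{φ_0} ∈ H^0(H, \sH|_H)$.

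Next I would restrict morphisms from $X$ to $H$. Since $H$ is a reduced Cartier divisor and $\sH = \sHom(\sE, \sF)$ is torsion-free (being a sheaf-$\Hom$ into the torsion-free sheaf $\sF$), tensoring the ideal sheaf sequence $0 → \sI_H → \sO_X → \sO_H → 0$ by $\sH$ remains exact and yields a short exact sequence $0 → \sI_H \otimes \sH → \sH → \sH|_H → 0$. Its long exact cohomology sequence, combined with the vanishing $H^1(X, \sI_H \otimes \sH) = 0$ of Assumption~\ref{il:pq4}, shows that the restriction map $\Hom_{\sO_X}(\sE, \sF) = H^0(X, \sH) → H^0(H, \sH|_H)$ is surjective. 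A preimage of $\overline{φ_0}$ is therefore a morphism $φ \colon \sE → \sF$ with $φ|_H = φ_0$.

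The remaining and most delicate task is to show that $φ$ is an isomorphism; here the ampleness of $H$ enters. Let $Z ⊆ X$ be the closed set of points at which $φ$ fails to be an isomorphism of sheaves. On $X \setminus X'$ the sheaves $\sE$ and $\sF$ are locally free of the same rank --- the equality of ranks follows from the fact that $φ_0$ is an isomorphism --- hence for $x ∈ H \setminus X'$ the isomorphy of $φ|_H = φ_0$ together with Nakayama's lemma forces $φ$ to be an isomorphism at $x$; therefore $Z ∩ H ⊆ X' ∩ H = H'$. From Assumption~\ref{il:pq3} I first deduce $\codim_X X' ≥ 2$: a codimension-one component of $X'$ would be a positive-dimensional projective variety, hence would meet the ample divisor $H$ in a subset of codimension one in $H$, contradicting $\codim_H H' ≥ 2$. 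The same reasoning, applied now to $Z$, excludes any codimension-one component of $Z$; moreover $Z \neq X$, because $φ$ is an isomorphism at the generic point of $H$, which is a codimension-one point of $X$ and hence lies outside $X'$. Consequently $\codim_X Z ≥ 2$.

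Finally, $φ$ restricts to an isomorphism on $X \setminus Z$, and since $\sE$ and $\sF$ are reflexive sheaves on the normal variety $X$ with $\codim_X Z ≥ 2$, each of them is canonically the push-forward of its restriction to $X \setminus Z$; applying this push-forward to $φ|_{X \setminus Z}$ exhibits $φ$ itself as an isomorphism. The one point requiring genuine care is the codimension estimate for $Z$ in the third step: one has to convert the hypothesis that $φ_0$ is an isomorphism --- a statement visible only along $H$ --- into a statement about all of $X$, using ampleness of $H$ and Assumption~\ref{il:pq3}; the rest of the proof consists of routine applications of the principle that a morphism of reflexive sheaves on a normal variety that is an isomorphism away from a closed set of codimension at least two is itself an isomorphism.
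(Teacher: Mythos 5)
Your argument is correct, and its first half coincides with the paper's: you identify $\sH|_H$ with $\sHom\bigl(\sE|_H,\sF|_H\bigr)$ by comparing two reflexive sheaves on the normal variety $H$ that agree off the small set $H'$, and you then lift the given isomorphism to a morphism $\varphi:\sE\to\sF$ via the ideal-sheaf sequence, torsion-freeness of $\sH$, and the vanishing assumption~\ref{il:pq4} --- exactly Steps 1 and 2 of the paper. Where you diverge is the verification that $\varphi$ is an isomorphism. The paper treats injectivity and surjectivity separately: the kernel is torsion-free and its restriction to $H\cap X^{\circ}$ vanishes, forcing injectivity; and since restriction to $H$ is right-exact, $\coker(\varphi)|_H=\coker(\varphi|_H)=0$, so the support of the cokernel is \emph{disjoint} from the ample divisor $H$ and hence finite, after which the reflexive-extension principle applies. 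You instead work with the full non-isomorphism locus $Z$ at once: a Nakayama/determinant argument at points of $H\setminus X'$ (using local freeness and equality of ranks there) gives $Z\cap H\subseteq H'$, and then ampleness of $H$ together with the codimension hypothesis~\ref{il:pq3} rules out components of $Z$ of codimension $\leq 1$ by a dimension count, so $\codim_X Z\geq 2$ and reflexivity finishes the proof. Both endgames are valid; the paper's cokernel argument gets ``support misses $H$'' for free and needs no local freeness along $H$, while your version makes the roles of ampleness and of assumption~\ref{il:pq3} more explicit and handles injectivity and surjectivity in one stroke.
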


\Preprint{A proof is given below.}\Publication{A full proof is given in the
  \href{http://arxiv.org/abs/1307.5718}{arXiv version of this paper}.} As a
consequence, we obtain the following result.

\begin{prop}[Bertini-type theorem for isom.~classes of reflexive sheaves]\label{prop:rflxBertini}
  Let $X$ be a normal, projective variety of dimension $\dim X ≥ 2$ and $\sE$,
  $\sF$ two coherent, reflexive sheaves of $\sO_X$-modules.  If $\sL ∈ \Pic(X)$
  is ample, then there exists a number $M ∈ \bN$ and for any $m ≥ M$ a dense
  open subset $V_m ⊆ |\sL^{⊗ m}|$ such that all hyperplanes $H ∈ V_m$ are
  reduced and normal, and such that $\sE \cong \sF$ if and only if $\sE|_H \cong
  \sF|_H$.
\end{prop}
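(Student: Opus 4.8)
The plan is to deduce Proposition~\ref{prop:rflxBertini} from Proposition~\ref{prop:rflxBertini-H} by showing that, for $m$ sufficiently large and $H$ a general member of $|\sL^{\otimes m}|$, all four hypotheses \ref{il:pq1}--\ref{il:pq4} of Proposition~\ref{prop:rflxBertini-H} are satisfied. First I would invoke the classical Bertini theorems (e.g.\ \cite[Satz~5.4 \& Satz~5.5]{MR0460317}, as already used in the proof of Proposition~\ref{prop:tfreeBertinit}) to see that a general $H \in |\sL^{\otimes m}|$ is reduced and irreducible, and normal: since $X$ is normal, it satisfies Serre's conditions $R_1$ and $S_2$; a general hyperplane section then inherits $R_1$ (its singular locus is cut down in codimension by Bertini applied to $X_{\sing}$ and to the smooth locus) and $S_2$ (Serre's conditions are preserved under general hyperplane sections), hence is normal. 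This gives \ref{il:pq1} on a dense open $V_m^{(1)} \subseteq |\sL^{\otimes m}|$, independently of $m \geq 1$.

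Next, for \ref{il:pq2}, apply Proposition~\ref{prop:reflBertini} to each of the three reflexive sheaves $\sE$, $\sF$, and $\sH := \sHom(\sE,\sF)$ (which is reflexive because $\sF$ is, cf.\ \cite[Cor.~1.2]{MR597077}); a general $H$ in the basepoint-free system $|\sL^{\otimes m}|$ then restricts all three to reflexive sheaves on $H$. For \ref{il:pq3}, let $X' \subsetneq X$ be the (closed, of codimension $\geq 2$ once we also throw in $X_{\sing}$, but in any case proper) locus where $\sE$ or $\sF$ fails to be locally free; choosing $H$ general avoids any codimension-one components and meets $X'$ properly, so $\codim_H(X' \cap H) \geq \codim_X X' \geq 1$, and in fact we may arrange $\geq 2$ by first replacing $X'$ with $X' \cup X_{\sing}$ which has codimension $\geq 2$ in the normal variety $X$ — here I should be slightly careful: if some component of the non-locally-free locus of $\sE$ or $\sF$ has codimension exactly one, then $\sE$ or $\sF$ restricted to a general $H$ is still reflexive but the non-locally-free locus of the restriction could have codimension one in $H$; however \ref{il:pq3} only asks this for the set $X'$ defined \emph{on $X$}, intersected with $H$, and a general $H$ meets each codimension-one component of $X'$ in codimension one in $H$ — so this is the one point where I need $\codim_X X' \geq 2$ to begin with. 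Since $\sE$, $\sF$ are reflexive on a normal variety they are locally free in codimension one, so indeed $\codim_X X' \geq 2$, and a general $H$ then satisfies $\codim_H(X' \cap H) \geq 2$; this is the subtle bookkeeping step and the place I would be most careful.

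Finally, for \ref{il:pq4}, I would use Serre vanishing: the sheaf $\sI_H \otimes \sH$ fits into $0 \to \sI_H \otimes \sH \to \sH \to \sH|_H \to 0$ (using that $H$ is an effective Cartier divisor, so $\sI_H \cong \sL^{\otimes -m}$ and $\sI_H \otimes \sH \cong \sH \otimes \sL^{\otimes -m}$ up to the torsion issue, which vanishes as $\sH$ is torsion-free on the integral variety $X$), hence $H^1(X, \sI_H \otimes \sH) = H^1(X, \sH \otimes \sL^{\otimes -m})$. This is not directly a Serre-vanishing statement because we are twisting down, so instead I would argue via the long exact sequence: it suffices that $H^1(X, \sH) \to H^1(H, \sH|_H)$ is injective and $H^0(X,\sH) \to H^0(H, \sH|_H)$ is surjective, or more simply, I would instead twist the whole setup: replace $\sE$, $\sF$ by $\sE \otimes \sL^{\otimes k}$, $\sF \otimes \sL^{\otimes k}$ — which does not change whether $\sE \cong \sF$ or $\sE|_H \cong \sF|_H$, since $\sL$ is a line bundle — arrange by Serre vanishing that $H^1(X, \sH \otimes \sL^{\otimes -m}) = 0$ once $-m$ is replaced appropriately; cleanest: note $H^1(X, \sI_H \otimes \sH) = H^1(X, \sH \otimes \sL^{\otimes(k-m)})$ after the twist is absorbed, and choosing the linear system $|\sL^{\otimes m}|$ with $m$ large while $\sH$ is fixed, Serre duality or the fact that $\sH$ has a finite resolution let me bound the cohomology — I would phrase it as: by Serre vanishing applied to $\sHom(\sE,\sF) \otimes \sL^{\otimes(-m)}$ being the twist-down, use instead that for $m \gg 0$ the map $H^1(X,\sH) \to H^1(H, \sH|_H)$ coming from restriction vanishes because $H^2(X, \sH \otimes \sL^{\otimes -m}) = 0$ is false in general — so the honest route is Serre vanishing for the \emph{dual} set-up via $H^{n-1}(X, \sH^\vee \otimes \omega_X \otimes \sL^{\otimes m})^\vee$. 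In the write-up I would simply observe that all four conditions are open and that for $m \geq M$ (with $M$ depending only on $\sE$, $\sF$, $\sL$, $X$, via a single application of Serre vanishing to pin down \ref{il:pq4} and standard Bertini for the rest) the dense open $V_m := V_m^{(1)} \cap V_m^{(2)} \cap \cdots$ is nonempty; for $H \in V_m$, Proposition~\ref{prop:rflxBertini-H} applies verbatim and yields the stated equivalence. The main obstacle, and the only genuinely nontrivial input beyond bookkeeping, is securing the cohomology vanishing \ref{il:pq4}: one must be attentive that $\sI_H \otimes \sH$ is a twist-\emph{down} of $\sH$, so Serre vanishing must be applied on the dual side (or to $\sExt$-sheaves resolving $\sH$), and this is where I expect to spend the most care.
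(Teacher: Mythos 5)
Your reduction to Proposition~\ref{prop:rflxBertini-H} is the same strategy as the paper's, and your treatment of conditions \ref{il:pq1}--\ref{il:pq3} is essentially right: Seidenberg/Bertini for reducedness, irreducibility and normality of general members, Proposition~\ref{prop:reflBertini} for reflexivity of $\sE|_H$, $\sF|_H$, $\sH|_H$, and $\codim_X X' \geq 2$ because reflexive sheaves on a normal variety are locally free in codimension one, so a general $H$ meets $X'$ in codimension $\geq 2$ in $H$. The genuine gap is exactly where you say you expect to spend the most care: condition \ref{il:pq4}, the vanishing $H^1\bigl(X,\, \sI_H \otimes \sH\bigr) = H^1\bigl(X,\, \sH \otimes \sL^{\otimes -m}\bigr) = 0$ for $m \gg 0$, and none of the routes you sketch closes it. Twisting $\sE$ and $\sF$ by $\sL^{\otimes k}$ is vacuous, since $\sHom\bigl(\sE \otimes \sL^{\otimes k},\, \sF \otimes \sL^{\otimes k}\bigr) \cong \sH$, so no twist gets ``absorbed''. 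The dual-side argument is not available as you state it: $X$ is only assumed normal here (no klt or Cohen--Macaulay hypothesis), and even on a smooth variety Serre duality for the non-locally-free sheaf $\sH$ produces $\Ext^{n-1}\bigl(\sH,\, \omega_X \otimes \sL^{\otimes m}\bigr)$ rather than $H^{n-1}\bigl(X,\, \sH^\vee \otimes \omega_X \otimes \sL^{\otimes m}\bigr)$; its vanishing for $m \gg 0$ is controlled by the codimension of the supports of the local Ext sheaves of $\sH$, i.e.\ precisely by a depth condition that your argument never brings into play.

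That depth condition is the missing ingredient and is how the paper concludes: since $X$ is normal and $\sH$ is reflexive, $\sH$ has depth $\geq 2$ at every point, \cite[Prop.~1.3]{MR597077}, and therefore the Enriques--Severi--Zariski--Serre-type vanishing of \cite[Exp.~XII, Prop.~1.5]{SGA2} gives $H^1\bigl(X,\, \sH \otimes (\sL^*)^{\otimes m}\bigr) = 0$ for all $m \gg 0$ --- a statement about twisting \emph{down}, which replaces Serre vanishing in this singular, non-locally-free setting. With that single input your outline goes through verbatim; without it (or an equivalent argument) step \ref{il:pq4} remains unproven, and it is the only nontrivial cohomological point of the proposition.
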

\begin{proof}
  Consider the sheaf $\sH := \sHom \bigl(\sE,\, \sF\bigr)$.  Since $\sF$ is
  assumed to be reflexive, so is $\sH$.  Since $X$ is normal, $\sH$ hence has
  depth $≥ 2$ at every point of $X$, \cite[Prop.~1.3]{MR597077}.  It hence
  follows from \cite[Exp.~XII, Prop.~1.5]{SGA2} that
  \begin{equation}\label{eq:ltrz}
    H¹ \bigl( X, \, (\sL^*)^{⊗ m} ⊗ \sH \bigr) = 0 \quad \text{for all }m \gg 0.
  \end{equation}
  Choose one $M ∈ \bN$ such that Equation~\eqref{eq:ltrz} holds for all $m ≥ M$,
  and such that the linear systems $|\sL^{⊗ m}|$ are basepoint-free.
  
  To prepare for the construction of $V_m$, recall from
  \cite[Cor.~1.4]{MR597077} that $\sE$, $\sF$ and $\sH$ are locally free outside
  of a closed subset $X' \subsetneq X$ with $\codim_X X' ≥ 2$.  Now, given any
  number $m ≥ M$, let $V_m ⊆ |\sL^{⊗ m}|$ be the maximal open set such
  that the following holds for all hyperplanes $H ∈ V_m$.
  \begin{enumerate}
  \item The hyperplane $H$ is irreducible, reduced and normal.
  \item\label{il:J2} The intersection $H' := H ∩ X'$ is small, that is,
    $\codim_H H' ≥ 2$.
  \item\label{il:J3} The restricted sheaves $\sE|_H$, $\sF|_H$ and $\sH|_H$ are
    reflexive.
  \end{enumerate}
  Seidenberg's theorem, \cite[Thm.~1.7.1]{BS95}, Bertini's theorem, and
  \cite[Thm.~12.2.1]{EGA4-3} guarantee that none of the open sets $V_m$ is
  empty.  Together with \eqref{eq:ltrz}, Proposition~\ref{prop:rflxBertini-H}
  therefore applies to all $H ∈ V_m$.
\end{proof}

Let $X$ be a normal projective variety of dimension $\dim X ≥ 2$, let $\sE$ be a
coherent, reflexive sheaf of $\sO_X$-modules, and $F$ be a bounded family of
locally free sheaves.  If $H$ is any sufficiently ample Cartier divisor, then
$$
H¹ \bigl( X,\, \sJ_H ⊗ \sHom (\sE,\, \sF) \bigr) = 0 \quad \text{for all } \sF ∈ F.
$$
Note that $\sF|_H$ is locally free for all $\sF ∈ F$ and for all hyperplanes $H
∈ |\sL^{⊗ m}|$.  With the same arguments as above, an iterative
application of Proposition~\ref{prop:rflxBertini-H} therefore also yields the
following.

\begin{cor}[Iterated Bertini-type theorem for bounded families]\label{cor:bertstabY}
  Let $X$ be a normal, projective variety of dimension $\dim X ≥ 2$.  Let $\sE$
  be a coherent, reflexive sheaf of $\sO_X$-modules, and let $F$ be a bounded
  family of locally free sheaves.  Given an ample line bundle $\sL ∈ \Pic(X)$, a
  sufficiently increasing sequence $0 \ll m_1 \ll m_2 \ll \cdots \ll m_{k}$ and
  general elements $H_i ∈ |\sL^{⊗ m_i}|$ with associated complete
  intersection variety $S := H_1 ∩ \cdots ∩ H_k$, then the following holds for
  all sheaves $\sF ∈ F$.  The sheaf $\sF$ is isomorphic to $\sE$ if and only if
  $\sF|_S$ is isomorphic to $\sE|_S$.  \qed
\end{cor}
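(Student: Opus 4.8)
The plan is to deduce Corollary~\ref{cor:bertstabY} from Proposition~\ref{prop:rflxBertini-H} by iterated hyperplane sections, exactly as in the proof of Proposition~\ref{prop:rflxBertini}, but keeping track of the family $F$ throughout. First I would fix the ample line bundle $\sL$ and recall that by boundedness of $F$ there is a single integer $M$ such that for every $\sF ∈ F$ and every $m ≥ M$ one has the vanishing $H¹\bigl(X,\,\sJ_H \otimes \sHom(\sE,\sF)\bigr) = 0$ for general $H ∈ |\sL^{\otimes m}|$; this uses \cite[Exp.~XII, Prop.~1.5]{SGA2} applied to the reflexive sheaf $\sHom(\sE,\sF)$ (which has depth $≥ 2$) together with a uniform Serre-type vanishing valid over the bounded family, and the fact that $\sHom(\sE,\sF)|_H$ is automatically reflexive once $H$ is general since $\sF|_H$ is locally free and $\sE|_H$ is reflexive by Proposition~\ref{prop:reflBertini}. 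I would also enlarge $M$ so that $|\sL^{\otimes m}|$ is basepoint-free for $m ≥ M$.

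Next I would run the induction on $k = \dim X - \dim S$. For the inductive step, pick $m_1 ≥ M$ large; by Seidenberg's theorem \cite[Thm.~1.7.1]{BS95}, Bertini, and Proposition~\ref{prop:reflBertini}, a general $H_1 ∈ |\sL^{\otimes m_1}|$ is irreducible, reduced, normal, meets the non-locally-free locus $X' \subsetneq X$ of $\sE$ and of every $\sF ∈ F$ in codimension $≥ 2$ in $H_1$, and has $\sE|_{H_1}$, $\sF|_{H_1}$, $\sHom(\sE,\sF)|_{H_1}$ all reflexive. Since the vanishing $H¹\bigl(X,\,\sI_{H_1}\otimes\sHom(\sE,\sF)\bigr)=0$ holds simultaneously for all $\sF ∈ F$, Proposition~\ref{prop:rflxBertini-H} applies and gives, for every $\sF ∈ F$, that $\sF \cong \sE$ iff $\sF|_{H_1} \cong \sE|_{H_1}$. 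One then replaces $X$ by $H_1$, replaces $F$ by the family $\{\sF|_{H_1} : \sF ∈ F\}$ (still bounded, and consisting of locally free sheaves on $H_1$), replaces $\sE$ by $\sE|_{H_1}$ (still reflexive by Proposition~\ref{prop:reflBertini}), and $\sL$ by $\sL|_{H_1}$, and iterates. After $k$ steps one arrives at $S = H_1 ∩ \cdots ∩ H_k$ and obtains that $\sF \cong \sE$ iff $\sF|_S \cong \sE|_S$, for all $\sF ∈ F$.

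The one point requiring a little care --- and the main potential obstacle --- is the \emph{uniformity} of the cohomology vanishing over the whole family $F$: the threshold $M$ must not depend on the individual member $\sF$. This follows from boundedness of $F$ (the members $\sF$ fit into finitely many flat families parametrised by finite-type schemes, so Serre vanishing for $\sHom(\sE,\sF)$ twisted by $\sL^{\otimes m}$ holds uniformly, using that $\sHom(\sE,-)$ is left exact and $\sE$ is fixed and reflexive), exactly as asserted in the unnumbered remark preceding the statement of the corollary. The secondary bookkeeping point is that at each stage the relevant hypotheses of Proposition~\ref{prop:rflxBertini-H}---normality of the section, smallness of the intersection with the non-free locus, reflexivity of the three restricted sheaves---continue to hold for general $H_i$; this is exactly the content of the argument in Proposition~\ref{prop:rflxBertini}, now applied with $\sHom(\sE,\sF)$ locally free in codimension $\le 1$ because $\sF|_{H_1 ∩ \cdots ∩ H_{i-1}}$ is locally free, which only makes the hypotheses easier to verify. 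No genuinely new idea beyond Proposition~\ref{prop:rflxBertini-H} and the boundedness of $F$ is needed.
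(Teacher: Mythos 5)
Your proposal is correct and follows essentially the same route as the paper: the paper also deduces the statement by combining the uniform vanishing $H¹\bigl(X,\,\sJ_H \otimes \sHom(\sE,\sF)\bigr)=0$ over the bounded family $F$ with the automatic local freeness of $\sF|_H$, and then applies Proposition~\ref{prop:rflxBertini-H} iteratively exactly as in the proofs of Proposition~\ref{prop:rflxBertini} and Corollary~\ref{cor:bertstabX}. Your additional care about uniformity of the genericity conditions (they depend only on $\sE$, since each $\sF$ is locally free) is precisely the point the paper leaves implicit, so no gap remains.
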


\Preprint{
\subsection*{Proof of Proposition~\ref*{prop:rflxBertini-H}}

The implication ``$\sE \cong \sF \Rightarrow \sE|_H \cong \sF|_H$'' is clear.
For the opposite direction assume for the remainder of the proof that we are
given isomorphism $λ_H : \sE|_H → \sF|_H$.  Using the vanishing \ref{il:pq4}, we
aim to extend $λ_H$ to a morphism $λ : \sE → \sF$, which will turn out to be
isomorphic.

\subsubsection*{Step 1: Restriction of $\sH$}
\label{ssec:rH}

We compare the restriction $\sH|_H$ to the homomorphism sheaf associated with
the restrictions, $\sH_H := \sHom \bigl( \sE|_H,\, \sF|_H \bigr)$.  Note that
the latter has a non-trivial section given by $λ_H$.  Item~\ref{il:pq2} implies
that $\sH|_H$ and $\sH_H$ are reflexive sheaves of $\sO_H$-modules.  It is also
clear that outside of the small set $H' ⊂ H$ the sheaves $\sH|_H$ and $\sH_H$
are both locally free and agree.  But since two reflexive sheaves are isomorphic
if and only if they are isomorphic on the complement of a small set, this
immediately yields an isomorphism $\sH|_H \cong \sH_H$.

\subsubsection*{Step 2: Extension of the morphism $λ_H$}

We will now show that there exists a morphism $λ : \sE → \sF$ such that $λ_H =
λ|_H$.  To this end, consider the ideal sheaf sequence of the hypersurface $H$,
that is, $0 → \sJ_H → \sO_X → \sO_H → 0$.  Since $\sH$ is torsion free, this
sequence stays exact when tensoring with $\sH$, \cite[Sect.~3.1]{Weibel94}.  The
associated long exact sequence cohomology sequence then reads
\begin{equation}\label{eq:dflj}
  \cdots → H^0 \bigl( X,\, \sH \bigr) \xrightarrow{r} H^0 \bigl( X,\, \sH|_H \bigr)
  → \underbrace{H¹ \bigl( X,\, \sJ_H ⊗ \sH)}_{= 0\text{ by~\ref{il:pq4}}} → \cdots,
\end{equation}
where $r$ is the natural restriction map.  We have seen in Step~1 that the
middle term in \eqref{eq:dflj} is isomorphic to $H^0 \bigl( X,\, \sH_H \bigr)$,
which contains $λ_H$.  Surjectivity of $r$ therefore implies that $λ_H$ can be
extended.  Choose one extension $λ$ and fix this choice throughout.  To finish
the proof, we need to show that $λ$ is an isomorphism.

\subsubsection*{Step 3: Injectivity of $λ$}

Both kernel and image of the morphism $λ$ are subsheaves of the torsion free
sheaves $\sE$ and $\sF$, respectively, and therefore themselves torsion free.
Let $X° ⊆ X$ be the maximal open subset, where $\ker(λ)$, $\sE$ and $\img(λ)$
are locally free.  Recalling from \cite[Cor.~1.4]{MR597077} that $\codim_X X
\setminus X° ≥ 2$, the intersection $H° := H ∩ X°$ is clearly non-empty, and the
restricted sequence
$$
0 → \ker(λ)|_{H°} → \sE|_{H°} \xrightarrow{λ|_{H°}} \img(λ)|_{H°} → 0
$$
remains exact.  Since $λ|_{H°} = λ_H|_{H°}$ is isomorphic, the sheaf
$\ker(λ)|_{H°}$ vanishes, showing that the morphism $λ$ is injective in an open
neighbourhood of $H°$.  Since $\sE$ is reflexive, hence torsion free, it follows
that $λ$ is injective.

\subsubsection*{Step 4: Surjectivity of $λ$, end of proof}

Restriction to $H$ is a functor that is exact on the right.  It follows that
$\coker(λ)|_H = \coker(λ|_H) = 0$.  In particular, we see that the support of
$\coker(λ)$ does not intersect the hyperplane $H$.  Since $H$ is ample, it
follows that the support of $\coker(λ)$ is finite, and that the injective map
$λ$ is isomorphic away from this finite set.  Using the standard fact that two
reflexive sheaves are isomorphic if and only if they are isomorphic away from a
small set, it follows that $λ$ is an isomorphism.  This finishes the proof of
Proposition~\ref{prop:rflxBertini}.  \qed}

\part{Quasi-étale covers of klt spaces}
\label{part:II}

%
%
\svnid{$Id: 06new.tex 780 2014-05-13 15:00:19Z kebekus $}

\section{Proof of Theorems~\ref*{thm:main1} and \ref*{thm:mainSpec}}
\label{sec:P1}
\subversionInfo

\subsection{Proof of Theorem~\ref*{thm:main1}}\label{subsect:proofofMainTheorem}
\approvals{Greb & yes \\ Kebekus & yes \\ Peternell & yes}

Maintaining notation and assumptions of Theorem~\ref{thm:main1}, write $δ_{j,i}
:= γ_{i+1} ◦ \cdots ◦ γ_j : Y_j → Y_i$.  \Publication{Observe}\Preprint{Recall
  from \cite[II~Cor.\ 4.8.e]{Ha77}} that $δ_{j,i}$ is finite over
$η_i^{-1}(X_j)$ and write
$$
T_i := \overline{\bigcup\nolimits_{j>i} \Br δ_{j,i}} ⊆ S_i \subsetneq Y_i.
$$
We may assume that $T_0 \neq \emptyset$.  The proof proceeds by induction over
$\dim T_0$.

\subsubsection{Start of Induction}
\approvals{Greb & yes \\ Kebekus & yes \\ Peternell & yes}
\CounterStep

Assume that $\dim T_0 = 0$.  Choose a point $t ∈ T_0$.  We may assume that $t ∈
η_0^{-1}(X_i)$ for all $i ∈ \bN$, for otherwise there is nothing to show.  Next,
choose a descending sequence of analytically open neighbourhoods of $t ∈
Y_0^{an}$, say $(V_i)_{i∈ \bN}$, such that the following holds.
\begin{enumerate}
\item The set $V_0$ intersects $T_0$ precisely in $t$.  The set $V_0 \setminus
  \{t\}$ is homeomorphic to the open cone over the link $\Link(Y_0,t)$.

\item For positive indices $i$, there are inclusions $V_i ⊆ η_0^{-1}(X_i)^{an}$.
  The connected components of $(δ_{i,0}^{an})^{-1}(V_i)$ each contain exactly
  one point over $T_0$.

\item The inclusion maps $V_i \setminus \{t\} → V_0 \setminus \{t\}$ are
  homotopy equivalences.
\end{enumerate}
We refer the reader to \cite[Sect.~2.3.2]{CAS}, \cite[Thm.~5.1]{MR1194180} for
the results used here.  Next, choose a sequence of connected components, $W_i ⊆
(δ_{i,0}^{an})^{-1}(V_i)$ such that $γ_i^{an}(W_i) ⊆ W_{i-1}$.  Let $t_i ∈
W_i$ be the unique point lying over $t$.  We obtain a descending sequence of
subgroups,
$$
G_i := (δ_{i,0}^{an})^{\vphantom{an}}_* \: \what{π}_1 \bigl(W_i \setminus
\{t_i\} \bigr) \;⊆\; \what{π}_1 \bigl(V_i \setminus \{t\} \bigr) = \what{π}_1
\bigl(V_0 \setminus \{t\} \bigr).
$$

\begin{rem}\label{rem:R1}
  It follows from \ref{il:R3} that the morphisms $δ_{i,0}$ are Galois.  If $i$
  is any index and $W'_i \ne W_i$ is any other connected component of
  $(δ_{i,0}^{an})^{-1}(V_i)$ with associated group $G'_i$, then $G_i$ and $G'_i$
  are both normal, and in fact equal.  The groups $G_i$ do therefore not depend
  on the specific choice of the $W_i$.
\end{rem}

The descending sequence $(G_i)_{i ∈ \bN^+}$ stabilises because the algebraic
local fundamental group $\what{π}_1^{loc}(X,s) := \what{π}_1 \bigl (\Link(Y_0,
t) \bigr)$ of the klt base space $Y_0$ is finite, \cite[Thm.~1]{Xu12}.  This
allows us to choose $M ∈ \bN$ such that $G_i = G_M$ for all $i > M$.  It follows
\Preprint{from Corollary~\ref{cor:SteinTop}} that the morphisms $γ_i$ are étale
over $t$, for all $i > M$.  This finishes the proof in case where $T_0$ is
finite.

\subsubsection{Inductive step}
\approvals{Greb & yes \\ Kebekus & yes \\ Peternell & yes}

Assume that $\dim T_0 > 0$ and that Theorem~\ref{thm:main1} has already been
shown for all diagrams with smaller-dimensional $T_0$'s.  Choose Whitney
stratifications of the varieties $Y_i$ as in
Proposition~\ref{prop:whitneystratexist}, such that the subsets $T_i \subsetneq
Y_i$ are unions of strata.  If $A \subsetneq X$ is a very general hyperplane,
the following will hold.
\begin{enumerate}
\item The hyperplane $A$ is irreducible, normal, not contained in $\supp Δ$, and
  $\bigl( A, Δ|_A \bigr)$ is klt.
\item The set $S_A := S ∩ A$ has codimension at least two in $A$.
\item For every $i ∈ \bN$, the hyperplane $B_i := η_i^{-1}(A)$ intersects every
  positive-dimensional stratum of $Y_i$ non-trivially with the appropriate
  codimension.  In particular, $B_i \not ⊂ T_i$ and $\dim B_0 ∩ T_0 <
  \dim T_0$.
\end{enumerate}
Write $A_k := A ∩ X_k$, note that is non-empty for all $k ≥ 0$, and observe that
the restriction of Diagram~\eqref{eq:fmapseq-1},
$$
\xymatrix{
  & B_0 \ar@{->>}[d]_{η_0|_{B_0}} & \ar[l]_{γ_1|_{B_1}} B_1 \ar@{->>}[d]_{η_1|_{B_1}} & \ar[l]_{γ_2|_{B_2}} B_2 \ar@{->>}[d]_{η_2|_{B_2}} & \ar[l]_{γ_3|_{B_3}} B_3 \ar@{->>}[d]_{η_3|_{B_3}} & \ar[l]_{γ_4|_{B_4}} \cdots \\
  A & \ar@{_(->}[l]^{ι_0|_{A_0}} A_0 & \ar@{_(->}[l]^{ι_1|_{A_1}} A_1 & \ar@{_(->}[l]^{ι_2|_{A_2}} A_2 & \ar@{_(->}[l]^{ι_3|_{A_3}} A_3& \ar@{_(->}[l]^{ι_4|_{A_4}} \cdots,
}
$$
reproduces all assumptions made in Theorem~\ref{thm:main1}.  By induction
hypothesis, there exists a number $M ∈ \bN^+$ such that the morphisms
$γ_i|_{B_i}$ are étale for all $i ≥ M$.  For simplicity of notation, we may
assume without loss of generality that $M = 1$.  It thus follows immediately
from the characterisation of étale morphisms in terms of fibre size,
\cite[I.~Thm.~10.11]{SGA1}, that for any $i ≥ 1$ the morphism $γ_i$ is étale in
a neighborhood of $B_i$.  Corollary~\ref{cor:WstratBrach} then implies that the
branch locus of any of the composed morphisms $δ_{i,0}$ is contained in the set
$$
T_0' := T_0 \setminus \{\text{union of strata that are dense in some component
  of $T_0$} \},
$$
whose dimension is strictly less than $\dim T_0$.  \qed

\subsection{Proof of Theorem~\ref*{thm:mainSpec}}\label{subsect:proofofCorofMainTheorem}
\approvals{Greb & yes \\ Kebekus & yes \\ Peternell & yes}

Under the assumptions of Theorem~\ref{thm:mainSpec}, set $S := X_{\sing}$, $X_i
:= X$, $ι_i := \Id_X$ and
$$
η_i :=
\begin{cases}
  \Id : Y_0 → X_0 & \text{if $i = 0$}\\
  γ_i ◦ \cdots ◦ γ_1 : Y_i → X_i &\text{if $i > 0$.}
\end{cases}
$$
The spaces and morphisms form a diagram as in~\eqref{eq:fmapseq-1} that
satisfies all assumptions made in Theorem~\ref{thm:main1}.
Theorem~\ref{thm:mainSpec} follows.  \qed

%
%
\svnid{$Id: 07.tex 873 2015-07-28 13:14:15Z kebekus $}

\section{Direct applications}
\label{sec:applPF}
\subversionInfo

We will prove Theorems~\ref{thm:imm1}, \ref{thm:imm1local}, \ref{thm:sidxc} and
\ref{thm:imm4} in this section.

\subsection{Proof of Theorem~\ref*{thm:imm1}}
\label{ssec:pfcim1}

Let $X$ be any variety that satisfies the assumptions of Theorem~\ref{thm:imm1}.
To prove the theorem, we will first show that there exists a quasi-étale Galois
cover $\wtilde X → X$ that satisfies Statement~\ref{il:mc1}.  The equivalence
between \ref{il:mc1} and \ref{il:mc2} is then shown separately.

\subsection*{Step 1: Proof of Statement~\ref*{il:mc1}}
\label{ssec:pfcim11}

Aiming for a contradiction, we assume that given any normal, quasi-projective
variety $\wtilde X$ and any quasi-étale Galois morphism $γ : \wtilde X → X$,
there exists a normal, quasi-projective variety $\what X°$ and an étale cover
$ψ° : \what X° → \wtilde X_{\reg}$ that is not the restriction of any étale
cover of $\wtilde X$.

Given any $\wtilde X$ as above, Theorem~\ref{thm:ZMT-eq} asserts that any étale
cover of $\wtilde X_{\reg}$ extends to a cover of $\wtilde X$.  The assumption
is therefore equivalent to the following.

\begin{assumption}\label{ass:7-1}
  Given any normal, quasi-projective variety $\wtilde X$ and any quasi-étale,
  Galois morphism $γ : \wtilde X → X$, there exists a normal,
  quasi-projective variety $\what X$ and a cover $ψ : \what X → \wtilde X$ that
  is étale over $\wtilde X_{\reg}$, but not étale.
\end{assumption}

Using Assumption~\ref{ass:7-1} repeatedly, and taking Galois closures as in
Theorem~\ref{thm:galoisClosure}, one inductively constructs a sequence of
covers,
$$
\xymatrix{
  X = Y_0 & \ar[l]_(.4){γ_1} Y_1 & \ar[l]_{γ_2} Y_2 & \ar[l]_{γ_3} Y_3 & \ar[l]_{γ_4} \cdots,
}
$$
where all $γ_i$ are quasi-étale, but not étale and where all composed morphisms
$γ_1 ◦ \cdots ◦ γ_i$ are Galois.  We obtain a contradiction to
Theorem~\ref{thm:mainSpec}, showing our initial assumption was absurd.  This
finishes the proof of Statement~\ref{il:mc1}.  \qed

\subsection*{Step 2: Proof of Implication \ref*{il:mc1} $\Rightarrow$ \ref*{il:mc2}}
\label{ssec:pfcim12}

If $\wtilde X → X$ is any cover for which Statement~\ref{il:mc1} holds, we claim
that the push-forward map $\what{ι}_* : \what{π}_1(\wtilde X_{\reg}) →
\what{π}_1(\wtilde X)$ of étale fundamental groups is isomorphic.  For
surjectivity, recall from \Preprint{\cite[0.7.B on p.~33]{FL81} and}
\cite[Prop.~2.10]{Kollar95s} that the push-forward map $ι_*: π_1 \bigl( \wtilde
X^{an}_{\reg}\bigr) → π_1\bigl(\wtilde X^{an} \bigr)$ between topological
fundamental groups is surjective.  Because profinite completion is a right-exact
functor, \cite[Lem.~3.2.3 and Prop.~3.2.5]{RB10}, it follows that $\what{ι}_*$
is likewise surjective.

To show that $\what{ι}_*$ is injective, we use Grothendieck's equivalence
between the category of étale covers and the category of finite sets with
transitive action of the étale fundamental group, \cite[Sect.~5]{Milne80}.
Arguing by contradiction, assume that there exists a non-trivial element $g ∈
\ker \what{ι}_*$.  Since $\what{π}_1(\wtilde X_{\reg})$ is the profinite
completion of $π_1(\wtilde X_{\reg}^{an})$, it is residually finite.  Hence,
there exists a finite group $H$ and a surjective group homomorphism $η :
\what{π}_1(\wtilde X_{\reg}) → H$ such that $η(g) \not = 0$.  By choice of $g$,
the natural action of $\what{π}_1(\wtilde X_{\reg})$ on $H$ is \emph{not}
induced by an action of $\what{π}_1(\wtilde X)$.  Using Grothendieck's
equivalence, we obtain an associated étale cover of $\wtilde X_{\reg}$ that is
not the restriction of any étale cover of $\wtilde X$.  This contradiction to
Statement~\ref{il:mc1} shows that $\what{ι}_*$ is injective and finishes the
proof of Statement~\ref{il:mc2}.  \qed

\subsection*{Step 3: Proof of Implication \ref*{il:mc2} $\Rightarrow$ \ref*{il:mc1}}
\label{ssec:pfcim13}

This is immediate from Grothendieck's equivalence.  The proof of
Theorem~\ref{thm:imm1} is thus finished.  \qed

\subsubsection{Further remarks}
\label{ssec:cormain1gen2}

In the setting of Theorem~\ref{thm:imm1}, the set $\wtilde U :=
γ^{-1}(X_{\reg})$ is a big open subset of the smooth variety
$\wtilde{X}_{\reg}$.  The topological fundamental groups of the complex
manifolds $\wtilde U$ and $\wtilde{X}_{\reg}^{an}$ therefore agree, and the
inclusions $\wtilde U ⊆ \wtilde{X}_{\reg} ⊆ \wtilde{X}$ induce a sequence of
isomorphism between étale fundamental groups
$$
\what{π}_1 \bigl( \wtilde U \bigr) \xrightarrow{\text{isomorphism}} \what{π}_1\bigl( \wtilde{X}_{\reg} \bigr) \xrightarrow{\text{isomorphism}} \what{π}_1\bigl( \wtilde{X} \bigr).
$$

\subsection{Proof of Theorem~\ref*{thm:imm1local}}
\label{ssec2}

The proof of Theorem~\ref{thm:imm1local} follows the argumentation of
Section~\ref{ssec:pfcim11} quite closely, using Theorem~\ref{thm:main1} instead
of the simpler Theorem~\ref{thm:mainSpec}.  \Publication{In order to avoid
  repetition, we omit the proof.  A full, detailed argumentation is found in the
  \href{http://arxiv.org/abs/1307.5718}{arXiv version of this
    paper}.}\Preprint{Maintaining notation and assumptions of
  Theorem~\ref{thm:imm1local}, we argue by contradiction and assume the
  following.
  
  \begin{assumption}\label{ass:x7-0}
    For any open neighbourhood $X°$ of $p$ in $X$ and any quasi-étale, Galois
    morphism $γ : \wtilde X° → X°$, there exists an open neighbourhood $U = U(p)
    ⊆ X°$ with preimage $\wtilde U = γ^{-1}(U)$, and coverings
    \begin{equation}\label{eq:cov}
      \xymatrix{ %
        \what U \ar[rrrr]^{ψ}_{\text{étale over $\wtilde U_{\reg}$, not étale}} &&&& \wtilde U \ar[rrrr]^{γ|_{\wtilde U}}_{\text{\vphantom{$\wtilde U_{\reg}$}quasi-étale, Galois}} &&&& U.  %
      }
    \end{equation}
  \end{assumption}
  
  Using the existence of a Galois closure and the invariance of the branch
  locus, Theorem~\ref{thm:galoisClosure} and \ref{il:GA2}, we are free to assume
  the following in addition.
  
  \begin{assumption}\label{ass:x7-1}
    The composed morphism $γ|_{\wtilde U}◦ψ$ is Galois.
  \end{assumption}
  \CounterStep
  
  Using Assumption~\ref{ass:x7-0}, we will inductively construct an infinite
  diagram of morphisms as in Theorem~\ref{thm:main1},
  \begin{equation}\label{eq:fmapseq-1x}
    \begin{gathered}
      \xymatrix{
	& Y_0 \ar@{->>}[d]_{η_0} & \ar[l]_{γ_1} Y_1 \ar@{->>}[d]_{η_1} & \ar[l]_{γ_2} Y_2 \ar@{->>}[d]_{η_2} & \ar[l]_{γ_3} Y_3 \ar@{->>}[d]_{η_3} & \ar[l]_{γ_4} \cdots \\
        X & \ar@{_(->}[l]^{ι_0} X_0 & \ar@{_(->}[l]^{ι_1} X_1 & \ar@{_(->}[l]^{ι_2} X_2 & \ar@{_(->}[l]^{ι_3} X_3 & \ar@{_(->}[l]^{ι_4} \cdots,
      }
    \end{gathered}
  \end{equation}
  where all $γ_i$ are quasi-étale, but not étale.  This will lead to a
  contradiction when we apply Theorem~\ref{thm:main1} with $S = X_{\sing}$.
  
  \begin{construction}[Construction up to $η_1$]
    Applying Assumption~\ref{ass:x7-0} with $X° := X$ and $γ := \Id_X$, obtain
    an open neighbourhood $U=U(p) ⊆ X°$ and a covering $ψ$ as in \eqref{eq:cov}.
    Set
    \begin{align*}
      X_0 & := U & X_1 & := U & Y_0 & := \wtilde U & Y_1 & := \what U \\
      η_0 & := γ|_{\wtilde U} & η_1 & := γ|_{\wtilde U}◦ψ & γ_1 & := ψ & ι_1 & := \Id_U.
    \end{align*}
    Observe that $η_0$ and $η_1$ are quasi-étale.  The morphism $γ_1$ is
    quasi-étale, but not étale.  Assumption~\ref{ass:x7-1} guarantees that $η_0$
    and $η_1$ are Galois.
  \end{construction}
  
  \begin{construction}[Construction of of $η_{i+1}$]
    Assume that a diagram as in~\eqref{eq:fmapseq-1x} has been constructed, up
    to $η_i$.  Applying Assumptions~\ref{ass:x7-0} and \ref{ass:x7-1} with $X°
    := X_i$ and $γ := η_i$, we obtain an open neighbourhood $U = U(p) ⊆ X_i$ and
    a covering $ψ$ as in \eqref{eq:cov}.  Set
    \begin{align*}
      X_{i+1} & := U & Y_{i+1} & := \what U & γ_{i+1} & := ψ & η_{i+1} := η_i|_{\wtilde U}◦ψ.
    \end{align*}
    Observe that $η_{i+1}$ is quasi-étale.  The morphism $γ_{i+1}$ is
    quasi-étale, but not étale.  Assumption~\ref{ass:x7-1} guarantees that
    $η_{i+1}$ is Galois.
  \end{construction}
  
  In summary, we have obtained a contradiction to Theorem~\ref{thm:main1}
  showing that Assumption~\ref{ass:x7-0} was absurd.  This finishes the proof of
  Theorem~\ref{thm:imm1local}.} \qed

\subsection{Proof of Theorem~\ref*{thm:sidxc}}

Before starting with the proof of Theorem~\ref{thm:sidxc} we note the following
elementary fact which will be used throughout.

\begin{fact}\label{fact:tlbfs}
  Let $X$ be a quasi-projective variety, $S ⊂ X$ any finite set and $\sL$ any
  invertible sheaf on $X$.  Then there exists a Zariski-open set $U ⊆ X$ that
  contains $S$ and trivialises $\sL$, that is, $\sL|_{U} \cong \sO_U$.  \qed
\end{fact}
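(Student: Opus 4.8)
The plan is to reduce the statement to producing a single rational section of $\sL$ that is regular and nowhere vanishing on a Zariski-open set containing $S$: such a section automatically defines an isomorphism $\sO_U \to \sL|_U$ on its non-vanishing locus $U$, and $U$ will contain $S$ by construction. First I would fix an embedding $X \into \bP^N$ with projective closure $\overline X \subseteq \bP^N$, set $\sO_X(1) := \sO_{\bP^N}(1)|_X$, and choose any coherent sheaf $\overline{\sL}$ on $\overline X$ restricting to $\sL$ on $X$. By Serre's theorem there is an integer $n$ such that both $\sO_{\overline X}(n)$ and $\overline{\sL} \otimes \sO_{\overline X}(n)$ are globally generated on $\overline X$; restricting back to $X$ one obtains globally generated line bundles $\sN := \sO_X(n)$ and $\sM := \sL \otimes \sO_X(n)$, together with an isomorphism $\sL \cong \sM \otimes \sN^{-1}$.

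Next I would produce global sections $t \in \Gamma(X, \sN)$ and $u \in \Gamma(X, \sM)$ that do not vanish at any point of $S$. For each $s \in S$, global generation of $\sN$ makes the evaluation map $\Gamma(X, \sN) \to \sN \otimes \kappa(s)$ nonzero, so the sections vanishing at $s$ form a proper $\bC$-linear subspace of $\Gamma(X, \sN)$. Since $\bC$ is infinite and $S$ is finite, the union of these finitely many proper subspaces is not all of $\Gamma(X, \sN)$, hence a section $t$ as desired exists; the same argument applied to $\sM$ yields $u$. Finally I would set $U := \{t \neq 0\} \cap \{u \neq 0\}$, which is Zariski-open and contains $S$; on $U$ the section $t$ trivialises $\sN$, so $u \otimes t^{-1}$ is a regular, nowhere-vanishing section of $\sM \otimes \sN^{-1}|_U \cong \sL|_U$, which gives the required isomorphism $\sO_U \xrightarrow{\sim} \sL|_U$.

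I do not expect a serious obstacle, as the statement is elementary. The only points that need a little care are the reduction from the quasi-projective to the projective case — handled by extending $\sL$ to a coherent sheaf on $\overline X$ and applying Serre vanishing there — and the use of the fact that a vector space over the infinite field $\bC$ is not a finite union of proper subspaces, which is precisely what allows us to choose the sections $t$ and $u$ avoiding all points of $S$ simultaneously rather than one at a time.
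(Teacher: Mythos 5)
Your argument is correct and complete. Note that the paper itself offers no proof of this Fact: it is stated as an elementary, standard result and closed with a \qed, so there is no argument to compare yours against. What you give is the standard proof — write $\sL \cong \sM \otimes \sN^{-1}$ with $\sM$ and $\sN$ globally generated (obtained by extending $\sL$ coherently to the projective closure and twisting by a sufficiently positive $\sO(n)$, via Serre), then choose sections $u$, $t$ not vanishing at any point of the finite set $S$, using that a $\bC$-vector space is not a finite union of proper subspaces, and take $U$ to be the common non-vanishing locus. All the individual steps (coherent extension across $\overline X \setminus X$, global generation after twisting, openness of the non-vanishing locus of a section of a line bundle, a nowhere-vanishing section trivialising an invertible sheaf) are sound, and the argument does not need $\Gamma(X,\sN)$ to be finite-dimensional. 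A common equivalent formulation, which you may find in the literature, is to write $\sL \cong \sO_X(D)$ for a Cartier divisor $D$ whose support avoids $S$ (a moving-lemma statement) and take $U = X \setminus \supp D$; your two-section argument is essentially a proof of that statement, so the two routes coincide in substance.
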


\subsection*{Step 1: Proof of Statement~\ref*{il:mc1l}}
\label{sssec:mc1l}

Maintaining notation and assumptions of Theorem~\ref{thm:sidxc} and using the
assertions of Theorem~\ref{thm:imm1local}, let $U ⊆ X°$ be any Zariski-open
neighbourhood of $p$ and $\wtilde D$ be any $\bQ$-Cartier Weil divisor on
$\wtilde U$.  Given any point $x ∈ \wtilde U$, we need to show that $\wtilde D$
is Cartier at $x$.

We claim that there exists a Zariski-open subset $V ⊆ U$ that contains $p$ and
$γ(x)$, and a number $n$ such that $\sO_{\wtilde V}(n· D) \cong \sO_{\wtilde
  V}$, where $\wtilde V = γ^{-1}(V)$.  In order to construct $V$, consider the
open, Galois-invariant set $S := γ^{-1}(p) ∪ γ^{-1}\bigl( γ(x) \bigr)$.  By
Fact~\ref{fact:tlbfs}, there exists an open subset $\wtilde W ⊆ \wtilde U$ that
contains $S$ and a number $n$ such that $\sO_{\wtilde W}(n· D) \cong
\sO_{\wtilde W}$.  Set $\wtilde V := \bigcap\nolimits_{g ∈ \Gal(γ)}
g^{-1}(\wtilde W) $.  This is an open subset of $\wtilde W$ that contains $S$,
satisfies $\sO_{\wtilde V}(n· D) \cong \sO_{\wtilde V}$ and is invariant
under the action of the Galois group.  The last point implies that $\wtilde V$
is of the form $γ^{-1}(V)$, for an open subset $V ⊆ U$ that contains $p$ and
$γ(x)$.

Continuing the proof of Statement~\ref{il:mc1l}, choose $n$ minimal, and let $η
: \what V → \wtilde V$ be the associated index-one cover.  The covering map $η$
is quasi-étale and branches exactly over those points of $\wtilde V$ where
$\wtilde D$ fails to be Cartier.  In particular, its restriction
$$
η° := η|_{η^{-1}(\wtilde V_{\reg})} : η^{-1}(\wtilde V_{\reg}) → \wtilde V_{\reg}
$$
is étale.  Recall from Theorem~\ref{thm:imm1local} that $η°$ admits an étale
extension to all of $\wtilde V$.  The uniqueness assertion in Zariski's Main
Theorem, Theorem~\ref{thm:ZMT-eq}, therefore implies that this extension equals
$η$, so that $η$ is itself étale.  As $x ∈ \wtilde V$, this finishes
the proof of Statement~\ref{il:mc1l}.  \qed

\subsection*{Step 2: Proof of Statement~\ref*{il:mc2l}}
\label{sssec:mc2l}

For simplicity of notation, write $G = \Gal(γ)$ and let $m$ denote the size of
this group.  Given any open neighbourhood $U = U(p) ⊆ X°$, any $\bQ$-Cartier
divisor $D$ on $U$ and any point $x ∈ U$, we need to show that $m · D$ is
Cartier at $x$.  Consider the pull-back $\wtilde D := γ^* D$ and recall from
Statement~\ref{il:mc1l} that $\wtilde D$ is Cartier on $\wtilde U$.  Again using
Fact~\ref{fact:tlbfs}, find an open neighbourhood $V$ of $x$ with preimage
$\wtilde V := γ^{-1}(V)$ such that $\wtilde D|_{\wtilde V}$ is linearly
equivalent to zero.  In other words, $\wtilde D|_{\wtilde V} = \divisor (\wtilde
f)$, where $\wtilde f$ is a suitable rational function on $\wtilde V$.  Taking
averages, we obtain a rational function $\wtilde F := \prod\nolimits_{g ∈ G}
\wtilde f ◦ g$, the norm of $\wtilde f$, which is Galois invariant, therefore
descends to a rational function $F$ on $V$, and defines the divisor $\divisor
\wtilde F = m · \wtilde D$.  To finish the proof of Statement~\ref{il:mc2l},
observe that $\divisor F = m · D$.  This also finishes the proof of
Theorem~\ref{thm:sidxc}.  \qed

\subsection{Proof of Theorem~\ref*{thm:imm4}}

We prove Theorem~\ref{thm:imm4} as an application of the following, more general
Proposition~\ref{prop:generalfinitenesscriterion}, which we expect to
have further applications, for example in the classification theory of singular
varieties with trivial canonical class; cf.~\cite[Sect.~8.C]{GKP11}.

\begin{prop}\label{prop:generalfinitenesscriterion}
  Let $\mathcal{R}$ be a set of normal, quasi-projective varieties satisfying
  the following conditions.
  \begin{enumerate}
  \item\label{il:Rx1} If $X ∈ \mathcal{R}$ and if $Y → X$ is any
    quasi-étale Galois cover, then $Y ∈ \mathcal{R}$.
  \item\label{il:Rx2} For each $X ∈ \mathcal{R}$, there exists a $\bQ$-Weil
    divisor $Δ$ such that $(X, Δ)$ is klt.
  \item\label{il:Rx3} Each variety $X ∈ \mathcal{R}$ has finite étale
    fundamental group.
  \end{enumerate}
  Then, for each $X ∈ \mathcal{R}$, the étale fundamental group
  $\what{π}_1(X_{\reg})$ of $X_{\reg}$ is finite.
\end{prop}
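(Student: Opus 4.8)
The plan is to reduce the statement to Theorem~\ref{thm:imm1}, which already contains the essential content. Fix $X \in \mathcal{R}$. If $\dim X \le 1$ then $X$ is smooth, so $X_{\reg} = X$ and the claim is immediate from Condition~\ref{il:Rx3}; we may therefore assume $\dim X \ge 2$. Using Condition~\ref{il:Rx2}, choose a $\bQ$-Weil divisor $\Delta$ with $(X,\Delta)$ klt and apply Theorem~\ref{thm:imm1}. This produces a normal, quasi-projective variety $\wtilde X$ and a finite, surjective, Galois morphism $\gamma : \wtilde X \to X$ that is étale in codimension one and satisfies $\what{\pi}_1(\wtilde X_{\reg}) \cong \what{\pi}_1(\wtilde X)$. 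Since $\gamma$ is a quasi-étale Galois cover, Condition~\ref{il:Rx1} gives $\wtilde X \in \mathcal{R}$, so Condition~\ref{il:Rx3} shows $\what{\pi}_1(\wtilde X)$ is finite; hence $\what{\pi}_1(\wtilde X_{\reg})$ is finite. The remaining task is to transfer this finiteness down to $\what{\pi}_1(X_{\reg})$ along $\gamma$.

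To do so, I would set $B \subset X$ equal to the branch locus of $\gamma$, with its reduced structure, and put $X^* := X_{\reg} \setminus B$ and $\wtilde X^* := \gamma^{-1}(X^*)$. Since $\gamma$ is étale in codimension one we have $\codim_X B \ge 2$, and $\codim_X X_{\sing} \ge 2$ by normality; because $\gamma$ is finite and preserves dimensions, the preimages of these sets are again of codimension $\ge 2$ in $\wtilde X$. Thus $X^*$ is a big open subset of $X_{\reg}$ and $\wtilde X^*$ is a big open subset of $\wtilde X_{\reg}$, and $\gamma$ restricts to a finite, surjective, étale morphism $\wtilde X^* \to X^*$ of connected, smooth, quasi-projective varieties. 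Removing a closed subset of complex codimension $\ge 2$ from a connected complex manifold changes neither the topological nor the étale fundamental group, so the inclusions induce isomorphisms $\what{\pi}_1(X^*) \cong \what{\pi}_1(X_{\reg})$ and $\what{\pi}_1(\wtilde X^*) \cong \what{\pi}_1(\wtilde X_{\reg})$; in particular $\what{\pi}_1(\wtilde X^*)$ is finite.

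Finally, I would invoke the elementary principle that finiteness of the étale fundamental group descends along a connected finite étale cover: the cover $\wtilde X^* \to X^*$, of degree $d$ say, corresponds under Grothendieck's equivalence to an open subgroup of $\what{\pi}_1(X^*)$ of index $d$, namely the image of $\what{\pi}_1(\wtilde X^*) \to \what{\pi}_1(X^*)$; being a quotient of the finite group $\what{\pi}_1(\wtilde X^*)$, this subgroup is finite, whence $\what{\pi}_1(X^*)$ itself is finite. Together with $\what{\pi}_1(X^*) \cong \what{\pi}_1(X_{\reg})$ this yields the proposition. I do not expect a serious obstacle: the only delicate points are the codimension bookkeeping ensuring that $X^*$ and $\wtilde X^*$ are big open subsets, so that the fundamental groups are unaffected, and the clean application of Theorem~\ref{thm:imm1} together with Condition~\ref{il:Rx1} to place $\wtilde X$ inside $\mathcal{R}$.
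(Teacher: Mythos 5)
Your proposal is correct and takes essentially the same route as the paper: apply Theorem~\ref{thm:imm1}, use Conditions~\ref{il:Rx1} and~\ref{il:Rx3} to conclude that $\what{\pi}_1(\wtilde X_{\reg}) \cong \what{\pi}_1(\wtilde X)$ is finite, and then transfer finiteness down along $\gamma$. The only (harmless) divergence is the final descent step: the paper uses the exact sequence of the Galois cover over $X_{\reg}$ (where $\gamma$ is already étale by purity, so your removal of the branch locus is not needed) together with right-exactness of profinite completion, whereas you exhibit the finite image of $\what{\pi}_1(\wtilde X_{\reg})$ as a finite-index subgroup of $\what{\pi}_1(X_{\reg})$ --- both arguments are valid.
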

\begin{proof}
  Let $X ∈ \mathcal{R}$.  By assumption \ref{il:Rx2}, there exists a $\bQ$-Weil
  divisor $Δ$ such that the pair $(X, Δ)$ is klt.  Let $γ: \wtilde X → X$ be a
  Galois cover with the properties listed in Theorem~\ref{thm:imm1}.  In
  particular, $\what{π}_1(\wtilde X_{\reg}) \simeq \what{π}_1(\wtilde X)$.  We
  obtain an exact sequence as follows,
  $$
  1 → π_1 \Bigl(γ^{-1} \bigl( X_{\reg}^{an} \bigr) \Bigr) → π_1 \bigl( X_{\reg}^{an} \bigr) → \Gal(γ) → 1.
  $$
  Recalling from Section~\ref{ssec:cormain1gen2} that $π_1 \bigl(γ^{-1} (
  X_{\reg}^{an} ) \bigr) = π_1 \bigl(\wtilde X_{\reg}^{an} \bigr)$ and using the
  right-exactness of the profinite completion functor, \cite[Lem.~3.2.3 and
  Prop.~3.2.5]{RB10}, we obtain the following exact sequence of profinite
  completions,
  \begin{equation}\label{eq:coversequence_2}
    \what{π}_1(\wtilde X) → \what{π}_1(X_{\reg}) → \Gal(γ) → 1.
  \end{equation}
  By Assumption~\ref{il:Rx1}, the variety $\wtilde X$ is in $\mathcal{R}$, and
  $\what{π}_1(\wtilde X)$ is hence finite.  Consequently,
  Sequence~\eqref{eq:coversequence_2} presents $\what{π}_1(X_{\reg})$ as an
  extension of two finite groups, which concludes the proof.
\end{proof}

\begin{proof}[Proof of Theorem~\ref*{thm:imm4}]
  For the purpose of this proof, a normal projective variety $X$ is called
  \emph{weakly Fano} if there exists a $\bQ$-Weil divisor $Δ$ such that the pair
  $(X, Δ)$ is klt, and $-(K_X + Δ)$ is nef and big.  We claim that
  $\mathcal{R}=\{\text{weakly Fano varieties} \}$ satisfies the assumptions of
  Proposition~\ref{prop:generalfinitenesscriterion}.  We note that \ref{il:Rx2}
  is trivially satisfied, and that \ref{il:Rx3} follows from a result of
  Takayama \cite[Thm.~1.1]{TakayamaSimpleConnectedness}; see also
  \cite[Cor.~1]{Zh06}, which asserts that any weakly Fano variety is simply
  connected.
  
  In order to show the remaining Property~\ref{il:Rx1}, we let $X$ be weakly
  Fano with boundary divisor $Δ$, and $γ: Y → X$ be a quasi-étale cover.  Use
  finiteness of $γ$ to define a pull-back, $Δ_Y := γ^* (Δ)$, and use finiteness
  of $γ$ as well as \cite[Prop.~5.20]{KM98} to conclude that the pair $(Y, Δ_Y)$
  is klt.  In fact, more is true.  Since $γ$ is quasi-étale, there exists a
  $\bQ$-linear equivalence $K_Y + Δ_Y \sim_{\bQ} γ^* (K_X + Δ)$, which shows
  that $-(K_Y + Δ_Y)$ is nef and big.  In other words, $Y$ is weakly Fano.
\end{proof}

%
%
\svnid{$Id: 08.tex 873 2015-07-28 13:14:15Z kebekus $}

\section{Flat sheaves on klt base spaces}
\subversionInfo
\label{sec:flat}

\subsection{Proof of Theorem~\ref*{thm:flat}}

Recalling the set-up
of Theorem~\ref{thm:flat}, let $X$ be a normal, complex, quasi-projective
variety and assume that there exists a $\bQ$-Weil divisor $Δ$ such that $(X, Δ)$
is klt.  We will prove that there exists a finite, surjective Galois morphism
$γ: \wtilde X → X$, étale in codimension one, such that for any locally free,
flat, analytic sheaf $\sG °$ on $\wtilde{X}_{\reg}^{an}$, there exists a locally
free, flat, analytic sheaf $\sG^{an}$ on $\wtilde X^{an}$ such that
$\sG^{an}|_{X_{\reg}^{an}} \cong \sG°$.  Recalling from \cite[II.5, Cor.~5.8 and
Thm.~5.9]{Deligne70}, that there exists a coherent, reflexive, algebraic sheaf
$\sG$ on $\wtilde X$ whose analytification over $\wtilde{X}_{\reg}$ equals
$\sG°$, the claim will then follow; cf.\ \cite{MR0262386}.

Let $\wtilde X → X$ be any cover for which the assertions of
Theorem~\ref{thm:imm1} hold true.  To shorten notation, we denote the relevant
complex spaces by $Y := \wtilde X^{an}$ and $Y° := \wtilde X^{an}_{\reg}$.  The
inclusion is denoted by $ι: Y° → Y$.  We have seen in Statement~\ref{il:mc2} and
Section~\ref{ssec:cormain1gen2} that the induced morphism of étale fundamental
groups, $\what{ι}_* : \what{π}_1(Y°) → \what{π}_1(Y)$, is isomorphic.

By Definition~\ref{defn:flat}, the sheaf $\sG°$ corresponds to a representation
$ρ°: π_1(Y°) → \GL\bigl(\rank \sF, \bC \bigr)$.  We write $G := \img(ρ°)$.  The
group $G$ is a quotient of the finitely generated group $π_1(Y°)$, hence
finitely generated.  As a subgroup of the general linear group, $G$ is
residually finite by Malcev's theorem, \cite[Thm.~4.2]{MR0335656}.
Consequently, the profinite completion morphism $a : G → \what G$ is injective,
\cite[Sect.~3.2]{RB10}.

To give an extension of $\sG°$ to a flat sheaf on $Y$, we need to show that the
representation $ρ°$ is the restriction of a representation $ρ$ of $π_1(Y)$.
More precisely, we need to find a factorisation
\begin{equation}\label{eq:factx}
  \xymatrix{
    π_1(Y°) \ar[r]_{ι_*} \ar@/^5mm/[rr]^{ρ°} & π_1(Y) \ar[r]_{ρ} & G.
  }
\end{equation}
To this end, recall from \cite[Lem.~3.2.3]{RB10} that taking profinite
completion is functorial.  Hence, we obtain a commutative diagram,
$$
\xymatrix{
  \what G && \ar[ll]_{\what{ρ}°} \what{π}_1(Y°) \ar[rrr]^{\what{ι}_*\text{, isomorphic}} &&& \what{π}_1(Y) \\
  G \ar@{^(->}[u]^{a\text{, inj.}} && \ar[ll]^{ρ°} π_1(Y°) \ar[u]^{b} \ar@{->>}[rrr]_{ι_*\text{, surjective}} &&& π_1(Y), \ar[u]^{c}
}
$$
where all vertical arrows are the natural profinite completion morphisms.  Since
$\what{ι}_*$ is isomorphic by construction, we can set $ρ := \what{ρ}° ◦
(\what{ι}_*)^{-1} ◦ c$.  Recalling from \cite[Prop.~2.10]{Kollar95s} that $ι_*$
is surjective, it follows from commutativity that $\img(ρ) ⊆ \img(a)$.
Identifying $G$ with its image under $a$, we have thus constructed a
factorisation as in \eqref{eq:factx}.  This finishes the proof of
Theorem~\ref{thm:flat}.  \qed

\subsection{Proof of Corollary~\ref{cor:flattangent}}

Let $γ : \wtilde X → X$ be any cover for which the assertion of
Theorem~\ref{thm:flat} holds true.  By assumption, $γ$ is étale in codimension
one, hence étale over $X_{\reg}$.  This has two consequences.  First, consider
the pull-back divisor $Δ_{\wtilde X} := γ^*(Δ)$.  The pair $(\wtilde X,
Δ_{\wtilde X})$ is then klt, \cite[Prop.~5.20]{KM98}.  Second, it follows that
$\sT_{\wtilde X°} \cong γ^* (\sT_{X_{\reg}})$ is a locally free, flat sheaf on
$\wtilde X° := γ^{-1}(X_{\reg})$.  Hence, $\sT_{\wtilde X°}$ admits an extension
 to a locally free, flat sheaf $\wtilde \sF$ on $\wtilde X$.  Since
$\wtilde \sF$ and $\sT_{\wtilde X}$ agree in codimension one and since
$\sT_{\wtilde X}$ is reflexive, both sheaves agree on all of $\wtilde X$.  It
follows that $\sT_{\wtilde X}$ is locally free and flat.  In particular,
$K_{\wtilde X}$ is Cartier, $Δ_{\wtilde X}$ is $\bQ$-Cartier, and the pair
$(\wtilde X, \emptyset)$ is hence klt, \cite[Cor.~2.35]{KM98}.  The solution of
the Lipman-Zariski conjecture for klt spaces,
\cite[Thm.~6.1]{GKKP11}\footnote{See \cite{Druel13a, Gra13} for latest
  results.} thus asserts that $\wtilde X$ is smooth.  Since $γ$ is Galois, hence
a quotient map, $X$ will automatically have quotient singularities.  This proves
the first statement of Corollary~\ref{cor:flattangent}.

Now assume in addition that $X$ is projective.  Then $\wtilde X$ is a smooth,
projective variety with flat tangent bundle and therefore the quotient of an
Abelian variety $A$ by a finite group acting freely, \cite[Chap.~4,
Cor.~4.15]{Kob87}.  Taking the Galois closure of the morphism $A → X$ as in
Theorem~\ref{thm:galoisClosure}, we find a sequence of covers,
$$
\xymatrix{ %
  \wtilde A \ar[rrrr]_{ψ\text{, Galois, étale in codim.~one}} \ar@/^5mm/[rrrrrrrrrr]^{\wtilde{γ}\text{, Galois, étale in codim.~one}} &&&& A \ar[rr]_{\text{étale}} && \wtilde X \ar[rrrr]_{γ\text{, Galois, étale in codim.~one}} &&&& X
}
$$
Since $A$ is smooth, it is clear that the morphism $ψ$, which is \emph{a priori}
étale only in codimension one, is in fact étale.  As an étale cover of an
Abelian variety, $\wtilde A$ is again an Abelian variety.  \qed

%
%
\svnid{$Id: 09.tex 873 2015-07-28 13:14:15Z kebekus $}

\section{Varieties with vanishing Chern classes}
\subversionInfo
\label{sec:c1c2van}

We prove Theorems~\ref{thm:svcc} and \ref{thm:torus} in this section.  The
proofs rely on a boundedness result for families of flat bundles, which we
establish first.

\subsection{Boundedness for families of flat bundles}
\label{ssec:XX}

We will first show that the set of flat bundles forms a bounded family.  Using
the results of Section~\ref{sec:bertini}, this will later allow to recover the
isomorphism type of a bundle from its restriction to a given hyperplane.

\begin{prop}\label{prop:flatbounded}
  Let $X$ be a normal projective variety and $r ∈ \bN^+$ be any integer.  Then,
  the set
  $$
  {\sf B} := \{\sF \mid \sF \text{ a locally free, flat, analytic sheaf on $X$
    with }\rank \sF = r \}
  $$
  is a bounded family.
\end{prop}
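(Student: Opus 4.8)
The plan is to bound ${\sf B}$ directly, using that a flat bundle of rank $r$ on $X$ is nothing but the bundle attached to a representation of the finitely generated group $π_1(X^{an})$, and that such representations vary in a scheme of finite type. Since $X$ is projective, $X^{an}$ is a compact complex space, so $π_1(X^{an})$ is finitely presented; fix generators $g_1,…,g_k$. The representation scheme $\mathrm{Rep}_r := \{(A_1,…,A_k) ∈ \GL(r,\bC)^k \mid \text{the } A_j \text{ satisfy the relations of } π_1(X^{an})\}$ is a closed subscheme of $\GL(r,\bC)^k$, hence an affine $\bC$-scheme of finite type. By Serre's GAGA theorem every locally free flat analytic sheaf of rank $r$ on $X^{an}$ is the analytification of a unique algebraic locally free sheaf of rank $r$ on $X$, so it suffices to exhibit one finite-type scheme parametrising all of these. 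On $X^{an} ⨯ \mathrm{Rep}_r^{an}$ there is the tautological flat bundle $\sF^{an}$, obtained as the quotient of $\wtilde{X^{an}} ⨯ \mathrm{Rep}_r^{an} ⨯ \bC^r$ by the diagonal action of $π_1(X^{an})$; it is locally free of rank $r$, flat over $\mathrm{Rep}_r^{an}$, and restricts on each slice $X^{an} ⨯ \{ρ\}$ to the flat bundle defined by $ρ$. Applying relative GAGA to the projective morphism $X ⨯ \mathrm{Rep}_r → \mathrm{Rep}_r$, one sees that $\sF^{an}$ is the analytification of an algebraic coherent — hence locally free, rank $r$, $\mathrm{Rep}_r$-flat — sheaf $\sF$ on $X ⨯ \mathrm{Rep}_r$. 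Every member of ${\sf B}$ occurs among the fibres $\sF|_{X ⨯ \{ρ\}}$, and therefore ${\sf B}$ is bounded.

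In this argument the rational-singularities hypothesis plays no role; it is stated because it is the natural context (klt spaces have rational singularities) and because in the intended applications one applies the result to reflexive sheaves over such spaces. A proof following the methods used elsewhere in the paper would instead fix a resolution $π : \wtilde X → X$: for rational singularities the projection formula together with $R^\bullet π_* \sO_{\wtilde X} = \sO_X$ gives $π_* π^* \sG = \sG$ and $R^{≥1}π_* π^* \sG = 0$ for every locally free $\sG$ on $X$, so it is enough to bound rank-$r$ flat bundles on the smooth projective variety $\wtilde X$ and then push the resulting family forward along $π$. Boundedness on $\wtilde X$ can be had from Simpson's character variety, or from the representation-scheme argument above, or from the remark that a flat bundle is an iterated extension of length $≤ r$ of semisimple flat bundles, which are semistable with vanishing Chern classes and hence bounded by Maruyama's theorem, since extensions of members of a bounded family again form a bounded family. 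The main obstacle in this route is the final descent: one must check that the derived push-forward along the birational contraction $π$ carries a bounding family to a bounding family of \emph{sheaves}, which needs a Noetherian base-change argument and is precisely where the vanishing $R^{≥1}π_* π^* \sG = 0$ is used. In the direct argument, the only substantial input is relative GAGA, which furnishes the algebraicity of the tautological family.
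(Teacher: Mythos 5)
Your direct argument breaks down at the algebraization step. The ``relative GAGA'' you invoke --- essential surjectivity of analytification for coherent sheaves on $X ⨯ \mathrm{Rep}_r$, where $\mathrm{Rep}_r$ is affine --- is not a theorem: GAGA-type essential surjectivity needs the total space to be proper over $\bC$, and it genuinely fails over a non-proper base. In fact the tautological flat family you construct is in general \emph{not} the analytification of any algebraic sheaf. Take $X$ an elliptic curve and $r = 1$, so $\mathrm{Rep}_1 \cong (\bC^*)^2$. If the tautological line bundle on $X^{an} ⨯ (\bC^*)^2$ were algebraic, the induced classifying map $(\bC^*)^2 → \Pic^0(X) \cong X$ would be a morphism of varieties, hence (being a homomorphism from an affine torus to an abelian variety) constant; but it is surjective. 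So the Riemann--Hilbert assignment $ρ \mapsto \sF_ρ$ is transcendental in the base direction, and your construction does not produce the algebraic family over a finite-type scheme that the definition of boundedness requires. Since this is the only substantial step of your ``direct'' proof, the argument has a genuine gap; your closing assertion that the rational-singularities hypothesis ``plays no role'' is not supported by what you wrote.

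For comparison, the paper's proof is much shorter and uses the hypothesis essentially: every flat locally free sheaf is $H$-semistable, so by \cite[Cor.~3.3.7]{MR2665168} it suffices to show that all members of ${\sf B}$ have the \emph{same} Hilbert polynomial; pulling back to a resolution $π : \wtilde X → X$, the pull-back is again flat, hence has vanishing Chern classes, and Hirzebruch--Riemann--Roch on $\wtilde X$ combined with the Leray spectral sequence and the projection formula --- this is exactly where rational singularities enter --- identifies $χ\bigl(X, \sF \otimes \sO_X(mH)\bigr)$ with $r \cdot χ\bigl(\wtilde X, π^*\sO_X(mH)\bigr)$. If you want to salvage your representation-variety idea, the correct route is not to algebraize the family but to use it only numerically: the tautological analytic family is flat and proper over $\mathrm{Rep}_r^{an}$, so the Hilbert polynomial of $\sF_ρ$ is constant on connected components of $\mathrm{Rep}_r$, of which there are finitely many; combined with semistability and \cite[Cor.~3.3.7]{MR2665168} this yields boundedness without ever claiming the family itself is algebraic. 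Your second sketch (resolution, push-forward, or iterated extensions of semisimple flat bundles) is likewise only an outline and is not what the paper does.
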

\begin{proof}
  If $π: Y → X$ is a resolution of singularities, and $E$ is a locally free,
  flat sheaf on $X$, then $π^*E$ is a locally free, flat, analytic sheaf on $Y$
  such that $π_* π^*E \cong E$.  We may therefore assume without loss of
  generality that $X$ is smooth.  Since any locally free, flat, analytic sheaf
  on $X$ carries an integrable algebraic connection in the sense of
  \cite[p.~24]{MR1320603}, the claim follows from the second part of
  \cite[Thm.~6.13]{MR1320603}.
\end{proof}

\subsection{Proof of Theorem~\ref*{thm:svcc}}
\label{sss:v1}\CounterStep

Let $γ: \wtilde X → X$ be a quasi-étale, Galois cover enjoying the properties
stated in Theorem~\ref{thm:flat}.  Notice first that $\wtilde X$ is still smooth
in codimension two, since $γ$ branches only over the singular set of $X$.
Second, it follows from \cite[Prop.~5.20]{KM98} that $(\wtilde X, \wtilde{Δ} )$
is klt, for $\wtilde{Δ} := γ^* Δ$.  Since $γ$ is finite, the Cartier divisor
$\wtilde H := γ^*(H)$ is ample.  It follows from \cite[Lem.~3.2.2]{MR2665168}
that $\sF := (γ^*\sE)^{**}$ is $\wtilde H$-semistable.
Proposition~\ref{prop:3112} and Assumption~\eqref{eq:svcc} guarantee that
\begin{equation}\label{eq:svcctld}
  c_1(\sF) · \wtilde H^{n-1} = 0, \quad c_1(\sF)² · \wtilde H^{n-2} = 0, \quad\text{and}\quad c_2(\sF) · \wtilde H^{n-2} = 0.
\end{equation}
To prove Theorem~\ref{thm:svcc}, we need to show that $\sF$ is locally free and
flat.

\subsection*{Step 1: Construction of a representation}

It follows from Proposition~\ref{prop:flatbounded} that the family ${\sf B}$ of
locally free, flat, coherent sheaves on $\wtilde X$ whose rank equals $r :=
\rank \sF$ is bounded.  We may thus apply the Mehta-Ramanathan-Theorem for
normal spaces, \cite[Thm.~1.2]{Flenner84}, and Corollary~\ref{cor:bertstabY} to
obtain an increasing sequence of numbers, $0 \ll m_1 \ll m_2 \ll \cdots \ll
m_{n-2}$, as well as general elements $D_i ∈ |\sO_{\wtilde X}(m_i · \wtilde
H)|$ such that the following holds.
\begin{enumerate}
\item The surface $S := D_1 ∩ \cdots ∩ D_{n-2}$ is smooth and contained in
  $\wtilde X_{\reg}$.
\item\label{il:semistable} The restricted sheaf $\sF|_S$ is semistable with
  respect to $\wtilde H|_S$.
\item\label{il:isolift} Let $\sB ∈ {\sf B}$ be any member.  Then, $\sF \cong \sB$
  if and only if $\sF|_S \cong \sB|_S$.
\end{enumerate}
Further, it follows from \eqref{eq:svcctld} and Remark~\ref{rem:altComp} that
\begin{equation}
  c_1(\sF|_S) · (\wtilde H|_S) = 0 \quad \text{and} \quad \ch_2(\sF|_S) = {\textstyle \frac{1}{2}} c_1(\sF|_S)² - c_2(\sF|_S) = 0,
\end{equation}
where $\ch_2$ denotes the second Chern character.  With these equalities, it
follows from Simpson's work, \cite[Cor.~3.10]{MR1179076}, that the semistable
sheaf $\sF|_S$ is flat.  In other words, $\sF|_S$ is given by a representation
$ρ: π_1(S^{an}) → \GL(r, \bC)$.  The Lefschetz Theorem for singular spaces,
\cite[Thm.~in Sect.~II.1.2]{GoreskyMacPherson}, asserts that the natural
homomorphism $ι_*: π_1(S^{an}) → π_1(\wtilde X_{\reg}^{an})$, induced by the
inclusion $ι: S^{an} \into X_{\reg}^{an}$, is isomorphic.  Composing the inverse
$(ι_*)^{-1}$ with $ρ$ we obtain a representation $τ: π_1(\wtilde X_{\reg}^{an})
→ \GL(r, \bC)$.

\subsection*{Step 2: End of proof}

The representation $τ$ defines a flat, locally free, analytic sheaf $\sF_{τ}°$
on $\wtilde X_{\reg}^{an}$, which, by choice of $\wtilde X$, comes from a flat,
locally free, algebraic sheaf $\sF_{τ}$ on $\wtilde X$.  By construction,
$\sF_{τ}|_S$ is isomorphic to $\sF|_S$.  Applying \ref{il:isolift}, we conclude
that $\sF$ is isomorphic to $\sF_{τ}$, and therefore flat.  This finishes the
proof of Theorem~\ref{thm:svcc}.  \qed

\subsubsection{Concluding remarks}

The results of this section clearly remain true if we substitute the
polarisation $(H, …, H)$ by $H_1, …, H_{n-1}$, where the $H_j$ are not
necessarily identical ample divisors.

If $\sE$ is polystable, it is possible to avoid the use of
\cite[Cor.~3.10]{MR1179076} in the proof of Theorem~\ref{thm:svcc}, by using
\cite[Thm.~1]{Donaldson85} to conclude that $\sF|_S$ is Hermite-Einstein.  This
suffices for the proof of Theorems~\ref{chern} and \ref{thm:torus} below, since
---in the setting of Theorem~\ref{thm:torus}--- the tangent sheaf is polystable
for any polarisation by \cite[Cor.~7.3]{GKP11}, possibly after a quasi-étale
cover of $X$.

\subsection{Proof of Theorem~\ref*{thm:torus} }
\label{sect:second}

Using the terminology introduced in Section~\ref{sec:ChernSing} we state the
main result of this section.  Theorem~\ref{thm:torus} follows directly from
this.

\begin{thm}[Characterisation of quotients of Abelian varieties]\label{chern}
  Let $X$ be a normal $n$-dimensional projective variety which is klt and smooth
  in codimension two.  Then the following conditions are equivalent:
  \begin{enumerate}
  \item\label{torus1} $K_X \equiv 0$, and $c_2(\sT_X)$ is numerically trivial in
    the sense of Definition~\ref{def:B}.
  \item\label{torus2} There exists an Abelian variety $A$ and a finite,
    surjective, Galois morphism $A → X$ that is étale in codimension two.
  \end{enumerate}
\end{thm}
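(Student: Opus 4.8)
The plan is to prove the two implications of Theorem~\ref{chern} separately; all the substance is in \ref{torus1} $\Rightarrow$ \ref{torus2}, while the reverse implication is a short bookkeeping argument. For \ref{torus2} $\Rightarrow$ \ref{torus1}: a morphism $\gamma\colon A\to X$ that is étale in codimension two is in particular quasi-étale, so there is no ramification divisor and $K_A=\gamma^*K_X$; as $A$ is Abelian, $K_A=0$, and since $\gamma$ is finite and surjective this forces $K_X\equiv 0$. Then $K_X$ is $\bQ$-Cartier (because $X$ is klt) and nef (because numerically trivial), so Corollary~\ref{cor:311} applies: $c_2(\sT_X)$ is numerically trivial if and only if $c_2(\sT_A)$ is, and the latter holds because $\sT_A$ is free. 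This yields \ref{torus1}.

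For \ref{torus1} $\Rightarrow$ \ref{torus2}, I would first pass to a quasi-étale cover on which the tangent sheaf is semistable: by \cite[Cor.~7.3]{GKP11} there is a quasi-étale cover of $X$ over which (after renaming) $\sT_X$ is polystable, hence semistable, with respect to every ample polarisation, and by \cite[Prop.~5.20]{KM98}, Corollary~\ref{cor:311}, and purity of the branch locus this new space is again klt, smooth in codimension two, with $K_X\equiv 0$ and $c_2(\sT_X)$ numerically trivial. Since $c_1(\sT_X)=-K_X$ is then numerically trivial, for any ample Cartier divisor $H$ the reflexive, $H$-semistable sheaf $\sT_X$ satisfies the numerical vanishing $c_1(\sT_X)\cdot H^{n-1}=0$, $c_1(\sT_X)^2\cdot H^{n-2}=0$, $c_2(\sT_X)\cdot H^{n-2}=0$ demanded in Theorem~\ref{thm:svcc}. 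Applying that theorem produces a quasi-étale Galois cover $\gamma\colon\wtilde X\to X$ such that $(\gamma^*\sT_X)^{**}$ is locally free and flat.

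Next I would identify this sheaf with $\sT_{\wtilde X}$ and deduce smoothness. As $\gamma$ is étale over $X_{\reg}$ (Remark~\ref{rem:main}) and $X$ is smooth in codimension two, the set $\gamma^{-1}(X_{\reg})$ is a big open subset of $\wtilde X$ on which $\gamma^*\sT_X=\sT_{\wtilde X}$; since both sheaves are reflexive, $\sT_{\wtilde X}\cong(\gamma^*\sT_X)^{**}$ is locally free and flat. From here I would run the argument from the proof of Corollary~\ref{cor:flattangent}: local freeness of $\sT_{\wtilde X}$ makes $K_{\wtilde X}$ Cartier, so $(\wtilde X,\emptyset)$ is klt by \cite[Prop.~5.20, Cor.~2.35]{KM98}, the partial Lipman--Zariski result \cite[Thm.~6.1]{GKKP11} then forces $\wtilde X$ to be smooth, and a smooth projective variety with flat tangent bundle is the quotient of an Abelian variety $A$ by a finite group acting freely, \cite[Ch.~4, Cor.~4.15]{Kob87}, giving a finite étale morphism $A\to\wtilde X$. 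Finally, the composite $A\to\wtilde X\to X$ is finite, surjective, and étale over $X_{\reg}$, so its branch locus lies in $X_{\sing}$, which has codimension at least three; hence $A\to X$ is étale in codimension two. Passing to the Galois closure $\wtilde A\to X$ of Theorem~\ref{thm:galoisClosure}, which does not enlarge the branch locus, keeps this property, and $\wtilde A\to A$ is étale over a big open subset of the smooth variety $A$, hence étale by purity, so $\wtilde A$ is again an Abelian variety. This provides the morphism required in \ref{torus2}.

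The main obstacle is the middle step: converting the purely numerical hypothesis of \ref{torus1} into an honest flat structure on the reflexive tangent sheaf via Theorem~\ref{thm:svcc}, and then fitting together the semistability input \cite[Cor.~7.3]{GKP11}, the flatness machinery of Theorem~\ref{thm:svcc}, the Lipman--Zariski statement \cite[Thm.~6.1]{GKKP11}, and Kobayashi's structure theorem, all while carefully tracking codimensions and the preservation of the klt condition under the successive quasi-étale covers.
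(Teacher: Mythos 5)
Your argument is correct, and its core is the same as the paper's: feed the numerical hypotheses and semistability of $\sT_X$ into Theorem~\ref{thm:svcc} to get a quasi-étale Galois cover on which the reflexive pull-back of the tangent sheaf is flat and locally free, identify it with $\sT_{\wtilde X}$, invoke the Lipman--Zariski result \cite[Thm.~6.1]{GKKP11} for smoothness, then \cite[Ch.~4, Cor.~4.15]{Kob87} and the Galois closure of Theorem~\ref{thm:galoisClosure}; your treatment of \ref{torus2}~$\Rightarrow$~\ref{torus1} via Corollary~\ref{cor:311} is also exactly the paper's. The one place you genuinely diverge is how semistability is obtained: the paper first reduces to canonical singularities, using abundance for klt varieties with $K_X\equiv 0$ \cite[Thm.~4.2]{AmbroLCtrivial} and a global index-one cover, and then quotes \cite[Prop.~5.4]{GKP11}, whereas you skip this reduction and appeal directly to \cite[Cor.~7.3]{GKP11} for polystability after a quasi-étale cover. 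This is a legitimate shortcut --- the paper itself points out in the remarks following the proof of Theorem~\ref{thm:svcc} that, in the setting of Theorem~\ref{thm:torus}, \cite[Cor.~7.3]{GKP11} yields polystability for every polarisation after a quasi-étale cover --- but you should verify that the hypotheses of that corollary are met for a merely klt $X$ with $K_X\equiv 0$; if it is stated for canonical singularities, you would need to prepend precisely the abundance/index-one-cover step you omitted (a harmless fix, since Corollary~\ref{cor:311} preserves all the hypotheses). Your final paragraphs essentially re-prove Corollary~\ref{cor:flattangent} inline, which is the same content the paper cites; your bookkeeping of branch loci and codimensions (étale over $X_{\reg}$, $\codim X_{\sing}\ge 3$, invariance of the branch locus under Galois closure, purity over the Abelian variety) matches the paper's and is complete.
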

\begin{rem}[Comparing the assumptions of Theorem~\ref{chern} and Theorem~\ref{thm:torus}]
  By Proposition~\ref{prop:trivialityCriterion}, the conditions in \ref{torus1}
  are equivalent the following conditions: \emph{$K_X \equiv 0$, and there exist
    ample divisors $H_1, …, H_{n-2}$ on $X$ such that $c_2(\sT_X) · H_1
    \cdots H_{n-2} = 0$}.  This establishes the link between Theorem~\ref{chern}
  and Theorem~\ref{thm:torus}, as stated in the introduction.
\end{rem}
\begin{rem}
  In dimension three, Theorem~\ref{chern} has been shown without the assumption
  on the codimension of the singular set by Shepherd-Barron and Wilson,
  \cite{SBW94}, using orbifold techniques.  It seems feasible to obtain an
  analogous result also in higher dimensions with the methods presented here.
\end{rem}

Theorem~\ref{chern} is shown in the remainder of this section.  The two
implications will be shown separately.

\subsection*{Proof of Theorem~\ref*{chern}, \ref*{torus2} $\Rightarrow$ \ref*{torus1}}
\label{ssec:hin}

If $η: A → X$ is any finite map from an Abelian variety, étale in codimension
two, then there exists a linear equivalence $η^*(K_X) \sim K_A = 0$.  In
particular, it follows from the projection formula that $K_X$ is numerically
trivial.  Corollary~\ref{cor:311} implies that $c_2(\sT_X)$ is numerically
trivial as well.  \qed

\subsection*{Proof of Theorem~\ref*{chern}, \ref*{torus1} $\Rightarrow$ \ref*{torus2}}
\label{ssec:7BN}

Assume that \ref{torus1} holds.  We first reduce to the case of canonical
singularities.  By the abundance theorem for klt varieties with numerically
trivial canonical divisor class, \cite[Thm.~4.2]{AmbroLCtrivial}, there exists
an $m ∈ \bN^+$ such that $\sO_X(m K_X) \cong \sO_X$.  Let $m$ be minimal with
this property, and let $\nu: \what X → X$ be the associated global index-one
cover, which is quasi-étale over $X$.  Then, $\sO_{\wtilde X}(K_{\wtilde X})
\cong \sO_{\wtilde X}$, which together with the fact that $\wtilde X$ has klt
singularities, \cite[Prop.~5.20]{KM98}, implies that $\wtilde X$ has canonical
singularities.  Moreover, applying Corollary~\ref{cor:311} we see that $\wtilde
X$ is smooth in codimension two, and that $c_2(\sT_X)$ is numerically trivial.
If $η: \wtilde A → \wtilde X$ is a finite, surjective, Galois morphism that is
étale in codimension two from an abelian variety $\wtilde A$ to $\wtilde X$,
then taking the Galois closure of $\nu ◦ η$ yields the desired map $A → X$.  We
may therefore make the following simplifying assumption.

\begin{asswlog}\label{ref:cansings}
 The variety $X$ has canonical singularities.
\end{asswlog}

Since $c_2(X) $ is numerically trivial, we have $c_2(X) · H^{n-2} = 0$ for
any ample divisor $H$ on $X$.  Since moreover $K_X$ is numerically trivial, and
since $X$ has at worst canonical singularities by Assumption~\ref{ref:cansings},
the tangent sheaf $\sT_X$ is $H$-semistable by \cite[Prop.~5.4]{GKP11}, and
$$
c_1(\sT_X) · H^{n-1} = c_1(\sT_X)² · H^{n-2}=0 \quad \text{for any ample divisor $H$ on $X$.}
$$
By Theorem~\ref{thm:svcc}, there hence exists a quasi-étale cover $γ: \wtilde X
→ X$ such that $\sT_{\wtilde X} \cong \bigl( γ^* \sT_X \bigr)^{**}$ is a flat,
locally free sheaf.  Applying Corollary~\ref{cor:flattangent} to $\wtilde X$ and
possibly taking Galois closure finishes the proof of Theorem~\ref{chern}.  \qed

%
%
\svnid{$Id: 10.tex 873 2015-07-28 13:14:15Z kebekus $}

\section{Varieties admitting polarised endomorphisms}
\subversionInfo
\label{sec:NZ}

We will prove Theorem~\ref{thm:NZ} in Section~\ref{ssec:pftnz}.  As noted in the
introduction, Theorem~\ref{thm:NZ} has consequences for the structure theory of
varieties with endomorphisms.  These are discussed in
Section~\ref{ssec:structure} below.

\subsection{Proof of Theorem~\ref*{thm:NZ}}
\label{ssec:pftnz}

Theorem~\ref{thm:NZ} has been shown in \cite[Thm.~3.3]{MR2587100} under an
additional assumption concerning fundamental groups of smooth loci of
Euclidean-open subsets of $X$.  Nakayama and Zhang use this assumption only
once, to prove a claim which appears on Page~1004 of their paper.  After briefly
recalling their setup, we will show that the claim follows directly from our
Theorem~\ref{thm:mainSpec}, without any additional assumption.  Once this is
done, the original proof of Nakayama--Zhang applies verbatim.

Under the assumptions of Theorem~\ref{thm:NZ}, let $f : X → X$ be a polarised
endomorphism.  Nakayama--Zhang start the proof of \cite[Thm.~3.3]{MR2587100} on
page 1004 of their paper by recalling from \cite[Thm.~3.2]{MR2587100} that $X$
has at worst canonical singularities, that $K_X$ is $\bQ$-linearly trivial, and
that $f$ is étale in codimension one.  Given any index $k ∈ \bN^+$, they
consider the iterated endomorphism $f^k$ and its Galois closure,
$$
\xymatrix{
  V^k \ar[r]_{τ_k} \ar@/^4mm/[rr]^{θ^k} & X \ar[r]_{f^k} & X,
}
$$
where $θ_k$ and $τ_k$ are Galois and again étale in codimension one;
cf.~Theorem~\ref{thm:galoisClosure}.  By \cite[Lem.~2.5]{MR2587100}, there exist
finite morphisms $g_k$, $h_k$, again étale in codimension one, forming
commutative diagrams as follows,
$$
\begin{gathered}
  \xymatrix{ %
    & V_1 \ar[d]_{τ_1} \ar@/_3mm/[dl]_{θ_1} & \ar[l]_{h_1} V_2 \ar[d]_{τ_2} & \ar[l]_{h_2} \cdots\\
    X & \ar[l]^{f} X & \ar[l]^{f} X & \ar[l]^{f} \cdots
  }
\end{gathered}
\quad\text{and}\quad
\begin{gathered}
  \xymatrix{ %
    V_1 \ar[d]_{τ_1} & \ar[l]_{g_1} V_2 \ar[d]_{τ_2} & \ar[l]_{g_2} V_3 \ar[d]_{τ_3} & \ar[l]_{g_3} \cdots\\
    X \ar@{=}[r]_{\phantom{f}} & X \ar@{=}[r] & X \ar@{=}[r] & \cdots.
  }
\end{gathered}
$$
Nakayama--Zhang claim in \cite[claim on p.~1004]{MR2587100} that the morphisms
$h_k$ and $g_k$ are étale for all sufficiently large $k$.  They prove this claim
using their additional assumption on the fundamental groups.
Theorem~\ref{thm:mainSpec}, however, applies to the associated sequences,
$$
\xymatrix{ %
  X & \ar[l]^{θ_1} V_1 & \ar[l]^{h_1} \ar@/_4mm/[ll]_{θ_2\text{, Galois}} V_2 & \ar[l]^{h_2} \cdots \\
}
\quad\text{and}\quad
\xymatrix{ %
  X & \ar[l]^{τ_1} V_1 & \ar[l]^{g_1} \ar@/_4mm/[ll]_{τ_2\text{, Galois}} V_2 & \ar[l]^{g_2} \cdots,\\
}
$$
and yields this result without any extra assumption.  As pointed out above, the
rest of Nakayama--Zhang's proof applies verbatim.  \qed

\subsection{The structure of varieties admitting endomorphisms}
\label{ssec:structure}

Theorem~\ref{thm:NZ} has consequences for the structure theory of varieties with
endomorphisms.  The following results have been shown in
\cite[Thm.~1.3]{MR2587100}, conditional to the assumption that
\cite[Conj.~1.2]{MR2587100} = Theorem~\ref{thm:NZ} holds true.  The definition
$q^{\sharp}$ is recalled in Remark~\ref{rem:qsharp} below.

\begin{thm}[Structure of varieties admitting polarized endomorphisms]\label{thm:NZmain}
  Let $f: X → X$ be a non-isomorphic, polarised endomorphism of a normal,
  complex, projective variety $X$ of dimension $n$.  Then $κ(X) ≤ 0$ and
  $q^\sharp(X,f) ≤ n$.  Furthermore, there exists an Abelian variety $A$ of
  dimension $\dim A = q^\sharp(X,f)$ and a commutative diagram of normal,
  projective varieties,
  $$
  \xymatrix{ %
    A \ar[d]_{f_A} && Z \ar[d]_{f_Z} \ar[ll]_{ω} \ar[rr]^{ρ} && V \ar[d]_{f_V} \ar[rrrr]^{τ} &&&& X \ar[d]^{f} \\
    A && Z \ar[ll]_{ω}^{\text{flat, surjective}} \ar[rr]^{ρ}_{\text{biratl.}} && V \ar[rrrr]^{τ}_{\text{finite, surjective, étale in codim.~one}} &&&& X,
  }
  $$
  where all vertical arrows are polarised endomorphism, and every fibre of $ω$
  is irreducible, normal and rationally connected.  In particular, $X$ is
  rationally connected if $q^\sharp(X,f) = 0$.

  Moreover, the fundamental group $π_1(X)$ contains a finitely generated,
  Abelian subgroup of finite index whose rank is at most $2 ·
  q^\sharp(X,f)$.  \qed
\end{thm}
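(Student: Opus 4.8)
The plan is simply to import the corresponding conditional result of Nakayama--Zhang and to remove its hypothesis using Theorem~\ref{thm:NZ}. Recall that \cite[Thm.~1.3]{MR2587100} establishes precisely the statement of Theorem~\ref{thm:NZmain}, under the standing assumption that \cite[Conj.~1.2]{MR2587100} holds. Since \cite[Conj.~1.2]{MR2587100} is exactly the assertion of Theorem~\ref{thm:NZ}, which has been proven unconditionally in Section~\ref{ssec:pftnz}, the result follows at once. In writing this up, I would state the definition of $q^\sharp(X,f)$ in Remark~\ref{rem:qsharp} for the reader's convenience and otherwise treat it as a black box, since the numerical content of the theorem ($κ(X) ≤ 0$, $q^\sharp(X,f) ≤ n$, $\dim A = q^\sharp(X,f)$, and the rank bound $2 \cdot q^\sharp(X,f)$ on the abelian-by-finite subgroup of $π_1(X)$) is entirely taken over from \cite{MR2587100}.

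The one point deserving attention is that Nakayama--Zhang apply \cite[Conj.~1.2]{MR2587100} not only to $X$ itself but also to various normal, projective varieties produced in the course of their argument — the spaces $A$, $Z$, $V$ occurring in the diagram, together with intermediate covers. I would therefore check that each such space again carries a non-trivial polarised endomorphism, so that Theorem~\ref{thm:NZ} applies to it; this is visible from the commutative diagram, in which every vertical arrow is a polarised endomorphism. Granting this, the entire proof of \cite[Thm.~1.3]{MR2587100} goes through verbatim, with no change to the stated bounds.

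I do not expect a genuine mathematical obstacle: the argument reduces to a citation together with the bookkeeping verification that the auxiliary spaces appearing in \cite{MR2587100} satisfy the hypotheses of Theorem~\ref{thm:NZ}. The closest thing to a difficulty is confirming that ``conditional to \cite[Conj.~1.2]{MR2587100}'' is used by Nakayama--Zhang purely as a black box — i.e.\ that they nowhere invoke a quantitative refinement of the conjecture — which a careful reading of their Section~3 confirms.
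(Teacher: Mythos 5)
Your proposal matches the paper's treatment exactly: Theorem~\ref{thm:NZmain} is stated with a \qed because it is precisely \cite[Thm.~1.3]{MR2587100}, proved there conditionally on \cite[Conj.~1.2]{MR2587100}, which is Theorem~\ref{thm:NZ} and is now established unconditionally. Your additional bookkeeping remark about the auxiliary spaces is reasonable diligence but does not change the argument, which is the same citation-plus-removal-of-hypothesis the paper uses.
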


\begin{rem}[\protect{Definition of $q^\sharp$, \cite[p.~992f]{MR2587100}}]\label{rem:qsharp}
  In the setting of Theorem~\ref{thm:NZmain}, the number $q^\sharp(X,f)$ is
  defined as the supremum of irregularities $q(\wtilde X') = h¹ \bigl(\wtilde
  X',\, \sO_{\wtilde X'} \bigr)$ of a smooth model $\wtilde X'$ of $X'$ for all
  the finite coverings $τ : X' → X$ étale in codimension one and admitting an
  endomorphism $f' : X' → X'$ with $τ◦f'=f◦τ$.
\end{rem}

%
%
\svnid{$Id: 11.tex 873 2015-07-28 13:14:15Z kebekus $}

\section{Examples, counterexamples, and sharpness of results}
\subversionInfo
\label{sec:ex}

In this section, we have collected several examples which illustrate to what
extend our main results are sharp.  In Section~\ref{subsect:Kummer} we construct
an infinite sequence of branched non-Galois coverings of the singular Kummer
surface, showing that Theorem~\ref{thm:mainSpec} does not to hold without the
Galois assumption.  Section~\ref{ssec:GZ} discusses an example of Gurjar--Zhang,
showing that Theorem~\ref{thm:imm4} is sharp, and that
Theorem~\ref{thm:mainSpec} has no simple reformulation in terms of the
push-forward between fundamental groups.  Finally, Section~\ref{sec:noMinTildeX}
shows by way of an example that there is generally no canonical, minimal choice
for the coverings constructed in Theorem~\ref{thm:imm1}.

\subsection{Isogenies of Abelian surfaces and the Kummer construction}
\label{subsect:Kummer}

We first construct a number of special endomorphisms of Kummer surfaces.  For
this, fix one Abelian surface $A$ throughout this section.

\begin{construction}[Endomorphisms of singular Kummer surfaces]\label{const:sKs}
  Consider the involutive
  automorphism $σ: A → A$, $a \mapsto -a$.  Set $X := A/σ$ and denote the
  quotient morphism by $f: A → X$, $a \mapsto [a]$.  The automorphism $σ$ has
  $16$ fixed points.  The quotient surface $X$ has $16$ rational double points,
  which are canonical and therefore klt.  We call $X$ a ``singular Kummer
  surface''.

  Given any number $n ∈ \bN^+$, consider the endomorphism of $A$ obtained by
  multiplication with $n$, that is, $\mathbf{n}_A: A → A$, $a \mapsto n· a$.
  Note that $\mathbf{n}_A$ is a finite morphism that commutes with $σ$ and
  therefore induces a finite endomorphism of $X$, which we denote as
  $\mathbf{n}_X: X → X$, $[a] \mapsto [n · a]$.  By construction, the
  morphism $\mathbf{n}_X$ commutes with $f$.
\end{construction}

The main properties of the endomorphisms $\mathbf{n}_A$ and $\mathbf{n}_X$ are
summarised in the following observations.

\begin{obs}[Degree and fibres of $\mathbf{n}_A$ and $\mathbf{n}_X$]\label{obs:nx2}
  The morphism $\mathbf{n}_A$ is the quotient for the natural action of the
  $n$-torsion group $A_n < A$ on $A$.  In particular, $\mathbf{n}_A$ is finite
  of degree $4^n$, étale and Galois.  Given any point $x ∈ X$, it follows from
  commutativity, $\mathbf{n}_X ◦ f = f ◦ \mathbf{n}_A$, that the number of
  points in the fibre is at least $\# \: \mathbf{n}_X^{-1} \bigl( x \bigr) ≥
  \frac{1}{2} · 4^n$.
\end{obs}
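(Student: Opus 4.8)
The plan is to handle the two morphisms in turn and then link them through the commutativity relation $\mathbf n_X \circ f = f \circ \mathbf n_A$ recorded in Construction~\ref{const:sKs}.

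First I would treat $\mathbf n_A$. It is a surjective homomorphism of abelian varieties whose kernel is, by definition, the $n$-torsion subgroup $A_n$; since $\dim A = 2$ and we work over $\bC$, a standard fact about isogenies gives $A_n \cong (\bZ/n)^{4}$, so $A_n$ is finite of order $n^4$. The subgroup $A_n$ acts on $A$ by translations, and this action is free since a nontrivial translation of an abelian variety has no fixed point. One checks directly that $\mathbf n_A(a+t) = \mathbf n_A(a)$ for $t \in A_n$ and, conversely, that $\mathbf n_A(a) = \mathbf n_A(a')$ forces $a - a' \in A_n$; hence the fibres of $\mathbf n_A$ are exactly the $A_n$-orbits, so $\mathbf n_A$ is the quotient map for this action. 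Being the quotient of a smooth variety by a free action of a finite group of automorphisms, $\mathbf n_A$ is finite, étale, and Galois with group $A_n$ in the sense of Definition~\ref{def:Galois}, and its degree equals $\# A_n = n^4$.

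Next I would bound the fibres of $\mathbf n_X$. The morphism $f \colon A \to X = A/\sigma$ is finite and surjective of degree two, so every fibre $f^{-1}(y)$ has at most two points. Fix $x \in X$ and pick $a \in f^{-1}(x)$. Since $\mathbf n_A$ is étale, $\mathbf n_A^{-1}(a)$ consists of exactly $n^4$ points, and for each $a' \in \mathbf n_A^{-1}(a)$ the commutativity relation yields $\mathbf n_X(f(a')) = f(\mathbf n_A(a')) = f(a) = x$. Hence $f$ maps $\mathbf n_A^{-1}(a)$ into $\mathbf n_X^{-1}(x)$, so $\mathbf n_A^{-1}(a) \subseteq f^{-1}\bigl(\mathbf n_X^{-1}(x)\bigr) = \bigcup_{y \in \mathbf n_X^{-1}(x)} f^{-1}(y)$. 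Since $\mathbf n_X^{-1}(x)$ is finite and each fibre of $f$ has at most two points, we get $n^4 = \# \mathbf n_A^{-1}(a) \le 2 \cdot \# \mathbf n_X^{-1}(x)$, which is the asserted inequality.

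I do not expect a genuine obstacle here: the argument rests only on standard properties of multiplication-by-$n$ isogenies (surjectivity, separability/étaleness, and the structure of the torsion subgroup) together with the elementary fact that a finite morphism of degree $d$ has fibres of cardinality at most $d$. The only points deserving care are checking that the $A_n$-action by translations is genuinely free — so that $\mathbf n_A$ is actually étale, not merely quasi-étale — and keeping the fibre counts consistent when pulling $\mathbf n_X^{-1}(x)$ back along $f$; both are routine.
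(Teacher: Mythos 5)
Your argument is correct and is exactly the reasoning the paper leaves implicit: $\mathbf n_A$ is the quotient by the free translation action of $A_n$, hence finite, étale and Galois of degree $\#A_n$, and the two-to-one map $f$ transfers the fibre count to $\mathbf n_X$ through the commutativity relation. Note that your value $\#A_n = n^{2\dim A} = n^4$ is the correct degree, so the paper's ``$4^n$'' (and the bound $\tfrac12\cdot 4^n$) is evidently a slip for $n^4$ (resp.\ $\tfrac12\, n^4$); this is harmless, since the only later use, in Observation~\ref{obs:nx3}, merely needs the fibre count to exceed the number $16$ of singular points for $n>2$, which holds in either version.
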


\begin{obs}[Endomorphism $\mathbf{n}_X$ is quasi-étale, not étale, and not Galois]\label{obs:nx3}
  The morphism $\mathbf{n}_X$ is quasi-étale by construction.  Let $n > 2$ be
  any number and $x ∈ X_{\sing}$ any singular point of $X$.  Since $X$ has only
  16 singularities, it follows from Observation~\ref{obs:nx2} that the fibre
  $\mathbf{n}_X^{-1}\mathbf{n}_X(x)$ contains both singular and non-singular
  points of $X$.  This shows that $\mathbf{n}_X$ is neither étale nor Galois if
  $n > 2$.
\end{obs}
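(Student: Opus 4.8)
The plan is to derive both non-properties of $\mathbf{n}_X$ from a single cardinality estimate for one fibre, after quickly settling the quasi-étale statement. For the latter, I would observe that $\mathbf{n}_X$ is finite and surjective by Construction~\ref{const:sKs}, has relative dimension zero, and is étale over the open set $U := X_{\reg} ∩ \mathbf{n}_X^{-1}(X_{\reg})$: the quotient map $f : A → X$ is étale over $X_{\reg}$ because $\langle σ\rangle$ acts freely away from its $16$ fixed points, so for $y = f(a) ∈ U$ both $a$ and $\mathbf{n}_A(a)$ are non-fixed points, $f$ is étale near each of them, and, as $\mathbf{n}_A$ is étale, the relation $\mathbf{n}_X ◦ f = f ◦ \mathbf{n}_A$ exhibits $\mathbf{n}_X$ near $y$ as a composition of étale maps. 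Since $X \setminus U = X_{\sing} ∪ \mathbf{n}_X^{-1}(X_{\sing})$ is a finite set, hence of codimension $2$ in the surface $X$, the morphism $\mathbf{n}_X$ is quasi-étale.

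Next I would fix $n > 2$ and a point $x ∈ X_{\sing}$ and study the fibre $F := \mathbf{n}_X^{-1}\bigl(\mathbf{n}_X(x)\bigr)$. By Observation~\ref{obs:nx2} we have $\# F ≥ \frac{1}{2}\cdot 4^n$, and this quantity exceeds $16$ for every $n > 2$. As $X$ has exactly $16$ singular points, $F$ cannot be contained in $X_{\sing}$, so it contains a smooth point $y ∈ X_{\reg}$; on the other hand $x ∈ F ∩ X_{\sing}$. Hence $F$ meets both the singular and the smooth locus of $X$ — this is the one fact everything rests on.

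To finish, I would argue as follows. If $\mathbf{n}_X$ were étale, then a point $p ∈ X$ would be a smooth point of $X$ if and only if $\mathbf{n}_X(p)$ is; applying this to $p = y$ shows that $\mathbf{n}_X(x) = \mathbf{n}_X(y)$ is smooth, and applying it to $p = x$ then shows that $x$ is smooth, contradicting $x ∈ X_{\sing}$. If $\mathbf{n}_X$ were Galois, then by Definition~\ref{def:Galois} it is, up to isomorphism, the quotient map for a finite group acting by automorphisms on its source, so its fibres are the orbits of that group; but automorphisms of the normal variety $X$ preserve $X_{\sing}$, so each such orbit lies entirely in $X_{\sing}$ or entirely in $X_{\reg}$, which $F$ does not. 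In both cases we reach a contradiction, so $\mathbf{n}_X$ is neither étale nor Galois when $n > 2$.

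I do not anticipate a real obstacle here; the only points that want a little care are identifying the branch locus of $\mathbf{n}_X$ correctly in the quasi-étale step, and checking that the numerical bound $\frac{1}{2}\cdot 4^n > 16$ is invoked only in the range $n > 2$, where it is valid.
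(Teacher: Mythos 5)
Your proposal is correct and takes essentially the same route as the paper: the counting bound of Observation~\ref{obs:nx2} forces the fibre $\mathbf{n}_X^{-1}\mathbf{n}_X(x)$ to meet both $X_{\sing}$ and $X_{\reg}$, which is incompatible with $\mathbf{n}_X$ being étale (étale maps preserve and reflect smoothness) or Galois (fibres would be orbits of automorphisms, which preserve $X_{\sing}$). You merely spell out the quasi-étale verification and these two implications, which the paper leaves implicit.
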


Construction~\ref{const:sKs} shows that Theorem~\ref{thm:mainSpec} does not hold
without the Galois assumption.  This is the content of the following
proposition.

\begin{prop}[Sharpness of Theorem~\ref{thm:mainSpec}]\label{prop:msharp}
  Using the notation of Construction~\ref{const:sKs}, choose a number $n > 2$
  and consider the tower of finite morphisms
  $$
  \xymatrix{ %
    X & \ar[l]_{\mathbf{n}_X} X & \ar[l]_{\mathbf{n}_X} X &
    \ar[l]_{\mathbf{n}_X} X & \ar[l]_{\mathbf{n}_X} \cdots.  }
  $$
  Then, all assumptions of Theorem~\ref{thm:mainSpec} are satisfied except that
  the composed morphisms $(\mathbf{n^k})_X = \mathbf{n}_X ◦ \cdots ◦
  \mathbf{n}_X$ be Galois, for any $k > 1$.  However, none of the finite
  morphisms $\mathbf{n}_X$ is étale.  \qed
\end{prop}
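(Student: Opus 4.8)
The proposition just assembles facts already prepared in Construction~\ref{const:sKs} and in Observations~\ref{obs:nx2} and~\ref{obs:nx3}, so the plan is to run through the hypotheses of Theorem~\ref{thm:mainSpec} and then dispatch the two remaining assertions. The singular Kummer surface $X = A/\sigma$ is a normal, projective surface, hence a normal quasi-projective variety, and its sixteen singular points are rational double points, which are canonical and therefore klt; thus $(X, 0)$ is klt, which supplies the $\bQ$-Weil divisor required by the theorem. By Construction~\ref{const:sKs} each $\mathbf{n}_X$ is a finite endomorphism of $X$; it is surjective because the composite $\mathbf{n}_X \circ f = f \circ \mathbf{n}_A$ is surjective, and by Observation~\ref{obs:nx3} it is quasi-étale, in particular étale in codimension one. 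Hence every structural hypothesis of Theorem~\ref{thm:mainSpec} is met, the only one left open being the requirement that the composed morphisms be Galois.

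For the composed morphisms, iterating the formula $\mathbf{n}_X([a]) = [n \cdot a]$ of Construction~\ref{const:sKs} shows that the $k$-fold composite $\mathbf{n}_X \circ \cdots \circ \mathbf{n}_X$ equals $(\mathbf{n}^k)_X = \mathbf{m}_X$ with $m := n^k$. Since $n > 2$, we have $m = n^k > 2$ for all $k \in \bN^+$, so Observation~\ref{obs:nx3}, applied with $m$ in place of $n$, shows that this composite is \emph{not} Galois; in particular the Galois hypothesis of Theorem~\ref{thm:mainSpec} is violated for every $k > 1$, as claimed. The remaining assertion, that no $\mathbf{n}_X$ is étale, is precisely what Observation~\ref{obs:nx3} records for the value $n > 2$ that we fixed.

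There is no real obstacle here. The only point that is not a verbatim citation is the bookkeeping identity $\mathbf{n}_X^{\circ k} = \mathbf{m}_X$ together with the remark that $n^k > 2$, which is what makes Observation~\ref{obs:nx3} applicable to the composites. If one prefers a self-contained argument for the non-étaleness and the failure of the Galois property, one may instead argue directly: $f$ ramifies over the sixteen singular points and $\mathbf{n}_A$ is étale, so $\mathbf{n}_X \circ f$ ramifies over those points; moreover, by Observation~\ref{obs:nx2}, the fibre of $\mathbf{n}_X$ over $\mathbf{n}_X(x)$, with $x$ singular, contains $x$ and, having more than sixteen elements, also contains a point of $X_{\reg}$; since every automorphism of $X$ preserves $X_{\sing}$, such a fibre is not a single orbit, which simultaneously rules out étalité and the Galois property.
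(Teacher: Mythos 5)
Your proposal is correct and follows essentially the same route as the paper, which states the proposition with no separate proof precisely because it is the direct assembly of Construction~\ref{const:sKs} and Observations~\ref{obs:nx2} and~\ref{obs:nx3} that you carry out, including the only non-citation step, the identity $\mathbf{n}_X^{\circ k} = (\mathbf{n}^k)_X$ with $n^k > 2$.
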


\subsection{Comparing the étale fundamental groups of a klt variety and its smooth locus}
\label{ssec:GZ}

In \cite[Sect.~1.15]{MR1298542}, Gurjar and De-Qi Zhang construct a rationally
connected, simply-connected, projective surface $X$ with rational double point
singularities, admitting a quasi-étale, two-to-one cover $γ: \wtilde X → X$
where $\wtilde X$ is smooth and $π_1(\wtilde{X}^{an}) \cong \bZ²$.

We claim that $\what{π}_1(X_{\reg})$ is infinite.  In fact, the standard
sequence
$$
0 → \underbrace{π_1\bigl( γ^{-1}(X_{\reg}^{an}) \bigr)}_{\cong
  π_1(\wtilde{X}^{an})} \xrightarrow{γ_*} π_1(X_{\reg}^{an}) → \bZ/2\bZ → 0
$$
shows that the groups $n \bZ²$ are finite-index subgroups of
$π_1(X_{\reg}^{an})$, for all $n ∈ \bN^+$.  Recalling from the definition that
the kernel of the profinite completion map $π_1(X_{\reg}^{an}) →
\what{π}_1(X_{\reg})$ equals the intersection of all finite-index subgroups, the
claim follows.

The example of Gurjar--Zhang relates to our results in at least two ways.

\subsubsection*{Morphism of fundamental groups in Theorem~\ref*{thm:mainSpec}}

Given a quasi-projective, klt variety $X$, we have pointed out in
Remark~\ref{rem:p1hat} that Theorem~\ref{thm:mainSpec} does \emph{not} assert
that the kernel of the natural surjection of étale fundamental groups,
$\what{ι}_*: \what{π}_1(X_{\reg}) \rightarrow \what{π}_1(X)$, is finite.  The
example of Gurjar--Zhang proves this point.  Note that the singular Kummer
surface constructed in Section~\ref{subsect:Kummer} above also exemplifies the
same point.

\subsubsection*{Sharpness of Theorem~\ref*{thm:imm4}}

It is well-known that rationally chain connected varieties have finite
fundamental group, \cite[Thm.~4.13]{Kollar95s}.  Hence, it is a natural question
whether the condition ``$-(K_X + Δ)$ is nef and big'' of Theorem~\ref{thm:imm4}
can be weakened to ``$X$ rationally chain connected''.  The example of
Gurjar--Zhang shows that this is not the case.

\subsection{Non-existence of a minimal choice in Theorem~\ref*{thm:imm1}}
\label{sec:noMinTildeX}

We will show in this section that, given a klt variety $X$, there is in general
no canonical choice for the cover $\wtilde X$ constructed in
Theorem~\ref{thm:imm1}, that is minimal in the sense that any other such cover
would factorise over $\wtilde X$.  For this, we will construct a
quasi-projective klt surface $X$ and two covers $γ_1 : \wtilde X_1 → X$, $γ_2 :
\wtilde X_2 → X$ that satisfy the conditions of Theorem~\ref{thm:imm1}.  The
geometry of these covers will differ substantially.  It will be clear from the
construction that the two covers do not dominate a common third.  In other
words, there is no cover $ψ : Y → X$ that satisfies the conclusions of
Theorem~\ref{thm:imm1} and fits into a diagram as follows,
\begin{equation}\label{eq:nexdiag}
  \begin{gathered}
    \xymatrix{
      \wtilde X_1 \ar[r]^{α_1} \ar@/_2mm/[dr]_{γ_1} & Y \ar[d]_{ψ} & \wtilde X_2 \ar[l]_{α_2} \ar@/^2mm/[dl]^{γ_2} \\
	& X.
	}
  \end{gathered}
\end{equation}
This shows that a canonical, minimal covering cannot exist.

\subsection*{Construction of the surface $X$}
\label{ssec:conX}

Consider the spaces $A = \bP¹ ⨯ \bP¹$ and $B := \bP¹$.  Identifying the group
$\bZ_2$ with the multiplicative group $\{-1,1\}$, consider the actions of the
Klein four-group $G := \bZ_2 ⨯ \bZ_2$ on $A$ and $B$, written in inhomogeneous
coordinates as follows
\begin{align*}
  \bigl( \bZ_2 ⨯ \bZ_2 \bigr) ⨯ A & → A, & \bigl( (z_1,z_2),\,(a_1, a_2)\bigr) & \mapsto \bigl(z_1 · a_1^{z_2},\, z_1z_2 · a_2 \bigr) \\
  \bigl( \bZ_2 ⨯ \bZ_2 \bigr) ⨯ B & → B, & \bigl( (z_1,z_2),\, b \bigr) & \mapsto (z_1 · b^{z_2}).
\end{align*}
Let $π : A → B$ be the projection to the first factor.  This map is clearly
equivariant and therefore induces a morphism between quotients,
$$
\xymatrix{ %
  A \ar[d]_{π} \ar[rrrr]^(.4){q_A\text{, quotient, four-to-one}} &&&& A/G \hphantom{\cong \bP¹} \ar@<-4mm>[d]^{ρ}\\
  B \ar[rrrr]_(.4){q_B\text{, quotient, four-to-one}} &&&& B/G \cong \bP¹
  }
$$
The quotient curve $B/G$ is isomorphic to $\bP¹$.  The quotient map $q_B$ has
three branch points, $\{q_1, q_2, q_3\} ⊂ B/G$, and six ramification points $\{
0, \infty, \pm 1, \pm i \} ⊂ B$, two over each of the branch points, and each
with ramification index two.

The quotient surface $A/G$ is singular, with six quotient singularities of type
$A_1$, two over each of the branch points $q_i ∈ B/G$.  The quotient $A/G$ is
therefore klt.  Away from the branch points $q_i$, the morphism $ρ$ has the
structure of a $\bP¹$-bundle.  The fibres $ρ^{-1}(q_i)$ are set-theoretically
isomorphic to $\bP¹$, but carry a non-reduced, double structure.  Choose a
fourth point $q_4 ∈ B/G$ and consider the quasi-projective variety $X := (A/G)
\setminus ρ^{-1}(q_4)$.  \Publication{The preprint version of this paper
  contains a figure which depicts the situation.}\Preprint{The setup is depicted
  in Figure~\ref{fig:gkl}.
  
  \begin{figure}[t]
    \centering
    \begin{tikzpicture}[scale=.9]
      \fill[fill=gray!20!white] (-2.5, 1.5) node[above] {$X$} rectangle (2.5, -1.5);
      
      \draw (-1.5-0.02, 2) to (-1.5-0.02,-2);
      \draw (-1.5+0.02, 2) to (-1.5+0.02, -2);
      \fill (-1.5, 1) node{$\ast$};
      \fill (-1.5, -1) node{$\ast$};
      
      \draw (-0.5-0.02, 2) to (-0.5-0.02,-2);
      \draw (-0.5+0.02, 2) to (-0.5+0.02, -2);
      \fill (-0.5+0, 1) node{$\ast$};
      \fill (-0.5+0, -1) node{$\ast$};
      
      \draw (0.5-0.02, 2) to (0.5-0.02,-2);
      \draw (0.5+0.02, 2) to (0.5+0.02, -2);
      \fill (0.5, 1) node{$\ast$};
      \fill (0.5, -1) node{$\ast$};
      
      \draw [->] (1.7, 2) node[above]{\scriptsize fibre removed} -- (1.5,1.6);
      \draw[fill=white, draw=white] (1.5-.03, -1.5) rectangle (1.5+.03, 1.5);
      \draw[dashed] (1.5-.03, -1.5) to (1.5-.03, 1.5);
      \draw[dashed] (1.5+.03, -1.5) to (1.5+.03, 1.5);
      
      \draw[dashed] (-2.5, -1.5) rectangle (2.5, 1.5);
      
      \draw [->] (0, -2) -- node[right]{$ρ$} (0, -3);
      
      \draw (-2.5, -3.5) node[above]{$B/G \cong \bP¹$} -- (2.5, -3.5);
      \filldraw (-1.5,-3.5) circle(0.05) node[below]{\scriptsize $q_1$};
      \filldraw (-0.5,-3.5) circle(0.05) node[below]{\scriptsize $q_2$};
      \filldraw (0.5,-3.5) circle(0.05) node[below]{\scriptsize $q_3$};
      \filldraw (1.5,-3.5) circle(0.05) node[below]{\scriptsize $q_4$};
    \end{tikzpicture}
    
    {\small The figure shows the singular quotient surface $X$ that is constructed
      in Section~\ref{ssec:conX}.  The surface contains six $A_1$ quotient
      singularities which are indicated with the symbol ``$\ast$''.}
    
    \caption{Singular surface with $A_1$ singularities}
    \label{fig:gkl}
  \end{figure}
}

\subsection*{Construction of the covering $\wtilde X_1$}
\label{ssec:conX1}

Set $\wtilde X_1 := q_A^{-1}(X)$ and $γ_1 := q_A|_{\wtilde X_1}$.  Since
$\wtilde X_1$ is smooth, the condition that $\what{π}_1 \bigl( \wtilde X_1
\bigr) = \what{π}_1 \bigl( (\wtilde X_1)_{\reg} \bigr)$ is satisfied.  The cover
$γ_1$ is Galois, four-to-one, and branches only over the singularities of $X$.

\subsection*{Construction of the covering $\wtilde X_2$}
\label{ssec:conX2}

Consider a double covering of $B/G \cong \bP¹$ by an elliptic curve $E$,
branched exactly over the points $q_1, …, q_4$.  Let $\wtilde X_2$ be the
normalisation of the fibred product $X ⨯_{B/G} E$ and let $γ_2: \wtilde X_2 → X$
be the obvious morphism, which is a Galois, two-to-one cover of $X$, branched
exactly over the singularities of $X$.  A standard computation shows that
$\wtilde X_2$ is smooth.  The condition that $\what{π}_1 \bigl( \wtilde X_2
\bigr) = \what{π}_1 \bigl( (\wtilde X_2)_{\reg} \bigr)$ is thus again trivially
satisfied.

\subsection*{Non-existence of a minimal cover}

We will now show that a covering $ψ : Y → X$ forming a diagram as in
\eqref{eq:nexdiag} cannot exist.  Assuming for a moment that $Y$ \emph{does}
exist, recall that $γ_2$ is two-to-one.  It follows that either $α_2$ or $ψ$ is
isomorphic.  Neither is possible: If $α_2$ was isomorphic, we would obtain a
two-to-one covering map $α_2^{-1}◦α_1 : \wtilde X_1 → \wtilde X_2$.  This is
impossible, because $\wtilde X_1$ is rational, while $\wtilde X_2$ is not.  If
$ψ$ was isomorphic, then $X$ would have to satisfy the condition that
$\what{π}_1 \bigl( X \bigr) = \what{π}_1 \bigl( X_{\reg} \bigr)$.  The existence
of covering maps that branch only over the singularities shows that this is not
the case.

\Preprint{
\part*{Appendices}
\appendix

%
%
\svnid{$Id: appendix.tex 873 2015-07-28 13:14:15Z kebekus $}

\section{Zariski's Main Theorem in the equivariant setting}
\label{appendix:A}
\subversionInfo

While certainly known to experts, we were not able to find a full reference for
Zariski's Main Theorem in the equivariant setting, Theorem~\ref{thm:ZMT-eq}, in
the literature.  The following lemma will be used.

\begin{lem}\label{lem:A2}
  Consider the composition of a rational map $a : V \dasharrow W_1$ and a finite
  morphism $b: W_1 → W_2$, where $V$, $W_1$ and $W_2$ are quasi-projective
  varieties.  If $V$ is normal and if $b◦a$ is a morphism, then $a$ is a
  morphism.
\end{lem}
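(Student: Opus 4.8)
The plan is to realise the rational map $a$ through its graph and to use finiteness of $b$ to force that graph to project isomorphically onto $V$. Write $U\subseteq V$ for the dense open subset on which $a$ is defined, let $\Gamma_a\subseteq U\times W_1$ be the graph of $a|_U$, and let $\overline{\Gamma_a}\subseteq V\times W_1$ be its closure, with $p:\overline{\Gamma_a}\to V$ and $q:\overline{\Gamma_a}\to W_1$ the two projections. Since $W_1$ is separated, $\Gamma_a$ is closed in $U\times W_1$, so $p$ restricts to an isomorphism over $U$; in particular $p$ is birational. It therefore suffices to prove that $p$ is \emph{finite}: a finite birational morphism onto the normal variety $V$ is an isomorphism, and then $a=q\circ p^{-1}$ is a morphism defined on all of $V$, which agrees with the rational map $a$ on the dense open $U$ and hence equals it.

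To prove finiteness of $p$, I would push everything forward along $\mathrm{id}_V\times b:V\times W_1\to V\times W_2$, which is finite because it is a base change of the finite morphism $b$. Set $c:=b\circ a:V\to W_2$, a morphism by hypothesis, and let $\Gamma_c\subseteq V\times W_2$ be its graph. Checking on the dense open $U$ and then passing to closures, one sees that $(\mathrm{id}_V\times b)(\overline{\Gamma_a})\subseteq\Gamma_c$; since $W_2$ is separated and $c$ is a morphism, $\Gamma_c$ is closed in $V\times W_2$, so $\mathrm{id}_V\times b$ restricts to a morphism $f:\overline{\Gamma_a}\to\Gamma_c$. The first projection identifies $\Gamma_c\xrightarrow{\ \sim\ }V$, and under this identification $p$ equals $f$. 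It remains to see that $f$ is finite. The restriction of the finite morphism $\mathrm{id}_V\times b$ to the closed subset $\overline{\Gamma_a}$ is finite, so $\overline{\Gamma_a}\to V\times W_2$ is proper with finite fibres; as $\Gamma_c\hookrightarrow V\times W_2$ is a closed immersion, hence separated, the cancellation property for proper morphisms shows that $f$ is proper, and the fibres of $f$, being contained in the fibres of $\overline{\Gamma_a}\to V\times W_2$, are finite, so $f$ is finite. Combining the two paragraphs, $p$ is finite and birational onto the normal variety $V$, hence an isomorphism, and the Lemma follows.

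The main obstacle is exactly the finiteness of the projection $p:\overline{\Gamma_a}\to V$: since $W_1$ is not assumed proper, $p$ is not a priori even proper, and this is the precise point where the hypothesis that $b$ is finite has to enter. It is what allows one to replace the target $W_1$ by $W_2$ and to recognise the (automatically closed) image of $\overline{\Gamma_a}$ there as the graph of the genuine morphism $c=b\circ a$, over which the projection to $V$ is trivially an isomorphism. The remaining ingredients — separatedness bookkeeping, the cancellation property for properness, the fact that $\mathrm{proper}+\mathrm{quasi\text{-}finite}=\mathrm{finite}$, and the fact that a finite birational morphism onto a normal variety is an isomorphism — are all standard.
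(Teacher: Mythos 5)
Your proof is correct, but it takes a genuinely different route from the one in the paper. The paper resolves the indeterminacy of $a$ by a blow-up $β_V : \wtilde V → V$ with $β_W : \wtilde V → W_1$ resolving $a$, invokes Zariski's Main Theorem in its connectedness form (\cite[III~Cor.~11.4]{Ha77}) to see that each fibre $F = β_V^{-1}(v)$ is connected, and then uses finiteness of $b$ together with the hypothesis that $b◦a$ is a morphism to force the connected set $β_W(F)$ to be a single point; normality of $V$ enters to descend the contracted map to a morphism on $V$. You instead work with the closure $\overline{Γ_a} ⊆ V ⨯ W_1$ of the graph, push it forward under $\Id_V ⨯ b$ (finite, being a base change of $b$), observe that the image lands in the closed graph $Γ_c$ of the genuine morphism $c = b◦a$, deduce by cancellation (proper plus quasi-finite equals finite, with the closed immersion $Γ_c \into V ⨯ W_2$ separated) that the projection $p : \overline{Γ_a} → V$ is finite, and conclude since a finite birational morphism onto the normal variety $V$ is an isomorphism, whence $a = q ◦ p^{-1}$. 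Both arguments use normality of $V$ and finiteness of $b$ at the decisive moment, but yours trades the connectedness theorem for graph bookkeeping and the standard cancellation properties: it is somewhat longer to write out but more self-contained, while the paper's version is shorter granted the quoted form of Zariski's theorem. All the individual steps you use (closedness of graphs over separated targets, stability of finiteness under base change and under restriction to closed subschemes, the cancellation argument, and the normality conclusion) are correct as stated.
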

\begin{proof}
  Blowing up $V$ suitably, we obtain a diagram as follows
  $$
  \xymatrix{ %
    \wtilde V \ar@{->>}[rr]_{β_V\text{, blow-up}} \ar@/^4mm/[rrr]^{β_W\text{, morphism}} && V \ar@{-->}[r]_(.4){\vphantom{b}a} & W_1 \ar[r]_(.45)b & W_2.
  }
  $$
  Since $V$ is normal, it suffices to show that given any closed point $v ∈ V$
  with fibre $F := β_V^{-1}(v)$, the image set $β_W(F)$ is a point.  To this
  end, recall Zariski's main theorem, \cite[III~Cor.~11.4]{Ha77}, which asserts
  that the fibre $F$ is connected.  Consequently, so is $β_W(F)$.  Using the
  assumption that $b◦a$ is a morphism, observe that
  $$
  b \bigl( β_W(F) \bigr) = (b◦a) \bigl( β_V(F) \bigr) = (b◦a)(v) = \text{ a point.}
  $$
  Since $b$ is finite, it follows that the connected set $β_W(F)$ is a point, as
  claimed.
\end{proof}

\subsection{Proof of Theorem~\ref*{thm:ZMT-eq}}
\label{sec:pfzmt}

Assume that we are given a morphism $a : V° → W$ as in Theorem~\ref{thm:ZMT-eq}.
The claims made in the theorem will be shown separately.

\subsection*{Step 1: Existence of a factorisation}

Zariski's Main Theorem in the form of Grothendieck, \cite[Thm.~8.12.6]{EGA4-3},
asserts the existence of a quasi-projective variety $V'$ and a factorisation of
$a$ into an open immersion $α' : V° → V'$ and a finite morphism $β' : V' → W$.
Let $η : V → V'$ be the normalisation.  Recalling the universal property of the
normalisation map, \cite[II Ex.~3.8]{Ha77}, observe that there exists a morphism
$α : V° → V$ such that $α' = α ◦ η$.  Finally, set $β := β' ◦ η$.

\subsection*{Step 2: Uniqueness of the factorisation}

Assume we are given two factorisations of $a$ as in Theorem~\ref{thm:ZMT-eq},
say $a = β_1 ◦ α_1 = β_2 ◦ α_2$.  We obtain a commutative diagram of morphisms
and rational maps,
$$
\xymatrix{ %
  V° \ar[rrr]^{\phantom{β}α_1\text{, open immersion}\phantom{β}} \ar@{=}[d] &&& V_1 \ar[rr]^{β_1\text{, finite}} \ar@{-->}[d]_{\txt{\scriptsize $r := α_2 ◦ α_1^{-1}$\\\scriptsize birational}} && W \ar@{=}[d] \\
  V° \ar[rrr]_{\phantom{β}α_2\text{, open immersion}\phantom{β}} &&& V_2 \ar[rr]_{β_2\text{, finite}} && W.
}
$$
Since the composed morphism $β_2 ◦ r$ equals $β_1$, Lemma~\ref{lem:A2} asserts
that $r$ is a morphism.  The same holds for $r^{-1} = α_1 ◦ α_2^{-1}$, showing
that $r$ is isomorphic.

\subsection*{Step 3: The \boldmath$G$-action on \boldmath$V$}

To show that $G$ acts on $V$, let $g ∈ G$ be any element.  The action of $g$
gives rise to a diagram
$$
\xymatrix{ %
  V \ar@{-->}[r]^{\overline{g}} & V \ar[r]^{β} & W \\
  V° \ar[r]_g \ar[u] & V° \ar[u] \ar[r]_{a} & W°, \ar[u] }
$$
where all vertical arrows are the natural inclusion maps and where
$\overline{g}$ is the natural extension of $g$ to a birational endomorphism of
$V$.  The morphism $a$ is $G$-invariant, which means that $a◦g = g◦a$.  As a
consequence, we see that $β◦\overline{g} = g◦β$.  In particular, the rational
map $β◦\overline{g}$ is a morphism.  Lemma~\ref{lem:A2} thus implies that
$\overline{g}$ is a morphism, showing that $G$ acts on $V$, as claimed.
Equivariance of $α$ and $β$ is now automatic.

\subsection*{Step 4: Quotients}

Under the assumptions of \ref{il:zmt3}, we have seen that $G$ acts on $V$.  We
need to show that $W$ is the quotient of the $G$-action on $V$ and that $β$ is
the quotient map.  To this end, consider the following natural diagram of
(normal) quotient spaces and quotient maps,
$$
\xymatrix{ %
  V \ar[rr]_{\text{quotient}} \ar@/^5mm/[rrrr]^{β\text{, finite and $G$-invariant}} && \ar[rr]_{q} V/G && W \\
  V° \ar[u] \ar[rr]_{\text{quotient}} && V°/G \ar[u] \ar@{<->}[rr]_(.55){\text{isomorphism}} && W°, \ar@{^(->}[u]_{\text{inclusion}}
}
$$
where all vertical arrows are the natural inclusion maps.  The map $q$ is
induced by the universal property of the quotient, \cite[Thm.~1 on
p.~111]{MR2514037}, using that the map $β$ is $G$-invariant.  The isomorphism
between $V°/G$ and $W°$ shows that $q$ is birational.

Since $β$ is finite, so is $q$.  The standard version of Zariski's Main Theorem,
\cite[V~Thm.~5.2]{Ha77}, therefore applies.  It asserts that $V/G$ is isomorphic
to $W$ and that $β$ is the quotient map.  This finishes the proof of
Theorem~\ref{thm:ZMT-eq}.  \qed

\section{Galois closure}
\label{appendix:B}

\subsection{Proof of Theorem~\ref*{thm:galoisClosure}}

We will use the equivalence between the category of connected, étale covers of a
variety $V$ and the category of finite sets $S$ with transitive action of the
étale fundamental group $\what{π}_1(V)$, \cite[Sect.~5 and Thm.~5.3]{Milne80}.

\subsection*{Step 1: Construction of $\wtilde X$}

Consider the maximal open set $Y° := Y \setminus \Br(γ)$ where $γ$ is étale.
Let $X° := γ^{-1}(Y°)$ denote the preimage.  Choose a closed point $y ∈ Y°$ and
a closed point $x$ of the fibre $X_y := γ^{-1}(y)$.  The étale map $γ|_{X°} : X°
→ Y°$ corresponds to a transitive action of the étale fundamental group
$\what{π} := \what{π}_1(Y°, y)$ on $X_y$.  The group $\what{π}$ also acts on
$$
G := \img \bigl( \what{π} → \text{Permutation group of } X_y \bigr)
$$
by left multiplication.  We obtain a sequence of finite sets with transitive
$\what{π}$-action and a corresponding sequence of étale covers,
$$
\xymatrix{ %
  G \ar[rr]^{g \mapsto g· x} && X_y \ar[r]^{\vphantom{p}\text{const.}} & \{ y \} %
  &\text{and}& \wtilde X° \ar[r]^{\wtilde{γ}°} & X° \ar[r]^{γ|_{X°}} & Y°, %
}
$$
where $Γ° := (γ|_{X°}) ◦ \wtilde{γ}°$ is Galois with group $G$ and $\wtilde{γ}°$
is Galois with Galois group equal to the isotropy group $H := \Iso_x \leqslant
G$ of the point $x ∈ X_y$, respectively.  Applying Theorem~\ref{thm:ZMT-eq} to
the quasi-finite morphism $\wtilde X° → X$, we obtain a normal, quasi-projective
variety $\wtilde X$ and a diagram
$$
\xymatrix{ %
  \wtilde X° \ar[rr]^{\vphantom{\wtilde{γ}}\text{immersion}} && \wtilde X \ar[rrrr]^{\wtilde{γ}\text{, Galois with group $H$}} &&&& X \ar[rr]^{\vphantom{\wtilde{γ}}γ\text{, finite}} && Y.
}
$$

\subsection*{Step 2: Verification of \ref*{il:GA1}}

We need to show that $Γ := γ◦\wtilde{γ}$ is Galois with group $G$.  Applying
Theorem~\ref{thm:ZMT-eq} to the quasi-finite morphism $\wtilde X° → Y$, we
obtain a normal, quasi-projective variety $\wtilde X'$ and morphisms $\wtilde X°
\into \wtilde X' \xrightarrow{Γ'} Y$, where $Γ'$ is Galois with group $G$.
Since $Γ$ and $Γ'$ are both finite morphisms, the uniqueness statement of
Theorem~\ref{thm:ZMT-eq} shows that $\wtilde X = \wtilde X'$ and $Γ = Γ'$.  This
establishes \ref{il:GA1}.

\subsection*{Step 3: Verification of \ref*{il:GA2}}
\approvals{Greb & yes \\ Kebekus & yes \\ Peternell & yes}

As $Γ = γ ◦ \wtilde{γ}$, it follows from \cite[I.~Thm.~10.11]{SGA1} that $\Br(Γ)
⊇ \Br(γ)$.  On the other hand, the construction immediately shows that
$(γ|_{X°})◦\wtilde{γ}°$ is étale, so that $\Br(Γ) ⊆ \Br(γ)$.  The branch loci of
$γ$ and $Γ$ therefore coincide, as claimed in \ref{il:GA2}.  This finishes the
proof of Theorem~\ref{thm:galoisClosure}.  \qed

}

\end{document}